\newcommand{\footremember}[2]{%
    \footnote{#2}
    \newcounter{#1}
    \setcounter{#1}{\value{footnote}}%
}
\newcommand{\footrecall}[1]{%
    \footnotemark[\value{#1}]%
} 
\numberwithin{equation}{section}
\DeclareMathOperator{\sign}{sign}
\DeclareMathOperator{\supp}{supp}
\DeclareMathOperator{\meas}{meas}
\DeclareMathOperator{\loc}{loc}
\DeclareMathOperator{\totvar}{TV}
\DeclareMathOperator\Lip{Lip}
\newtheorem{theorem}{Theorem}[section]
\newtheorem{lemma}{Lemma}[section]
\newtheorem{proposition}[lemma]{Proposition}
\newtheorem{definition}[lemma]{Definition}
\theoremstyle{remark}
\newtheorem{remark}[lemma]{Remark}
\newcommand{\seq}[1]{{\left\{#1\right\}}_{N \in \mathbb{N}}}
\newcommand{\erho}{\rho^{E,N}}
\newcommand{\terho}{\tilde{\rho^{E,N}}}
\newcommand{\erhoM}{\rho^{E,M}}
\newcommand{\inverho}{X}
\newcommand{\erhotilde}{\tilde{\rho}^{E,N}}
\newcommand{\lrho}{\rho^{L,N}}
\newcommand{\drho}{\rho^{D,N}}
\newcommand{\norminfty}[2]{\left\lVert#1\right\rVert_{L^{\infty}(#2)}}
\newcommand{\normone}[2]{\left\lVert#1\right\rVert_{L^1(#2)}}
\newcommand{\normtwo}[2]{\left\lVert#1\right\rVert_{L^2(#2)}}
\newcommand{\sendlim}[1]{\lim_{N \rightarrow +\infty}{#1} = 0}
\newcommand{\sendlimk}[1]{\lim_{k \rightarrow +\infty}{#1} = 0}
\newcommand{\R}{\mathbb{R}}
\newcommand{\N}{\mathbb{N}}
\title
{On the continuum limit of the\\
Follow-the-Leader model and its stability}
\author{
Fabio Ancona\footremember{DM}{Dipartimento di Matematica ``Tullio Levi-Civita", Universit\`a di Padova, Italy. {\tt \small ancona@math.unipd.it, mohamed.bentaibi@math.unipd.it}} \footremember{Indam}{F. Ancona and F. Rossi are members of G.N.A.M.P.A. (I.N.d.A.M.).}%
  \and Mohamed Bentaibi\footrecall{DM}%
  \and Francesco Rossi\footnote{Dipartimento di Culture del Progetto, Università Iuav di Venezia, Italy
        {\tt\small francesco.rossi@iuav.it}} \footrecall{Indam}%
}
\pgfplotsset{compat=1.18}
\begin{document}

\maketitle
\begin{abstract}
    
    We consider the Follow-the-Leader (FtL) model and study which properties of the 
    initial positioning of the vehicles 
    ensure its convergence to the classical 
     Lighthill-Whitham-Richards (LWR) model 
     for traffic flow. Robustness properties of both FtL and LWR models with respect to the initial discretization schemes are investigated. Some numerical simulations are also discussed.
\end{abstract}
\tableofcontents

\section{Introduction}

Vehicular traffic on a one-lane  road can be described at
two fundamentally different levels:
microscopic and macroscopic.
The first one is based on the individual modeling of each vehicle, 
whose  dynamics is governed by the distance to the nearest vehicle  in front. 
This is the so-called Follow-the-Leader (FtL) model \cite{brachstoneMCD,gazis1961nonlinear},
which consists of a system of ordinary differential equations.
The other one, relying on a continuum assumption (better justified in the context of heavy traffic),  describes the traffic flow  in terms of an averaged density  that evolves according to a
partial differential equation. 
Assuming that the number of vehicles is conserved we get the classical 
Lighthill-Whitham-Richards (LWR) model~\cite{LW,richards}, an hyperbolic conservation law in which the averaged velocity is an explicit function  of the density.

The analysis of convergence of the microscopic FtL model  towards the macroscopic nonlinear conservation law LWR, as the number of vehicles tends to infinity and their length  tends to $0$, has been recently investigated by several authors (see~\cite{klarmatrascle,colrossi,fagioli2017,di2015rigorous,holden2017continuum,holdenrisebronum,rossielena} and references therein). The question can be summarized as follows, see also Figure \ref{f-problem}:
\begin{itemize}
    \item Take an initial macroscopic description, i.e. a probability density $\bar \rho$;
    \item Discretize it in a suitable manner, finding a microscopic description with $N+1$ vehicles with initial positions $$\bar x_0^N<
\bar x_1^N<\dots<\bar x_{N-1}^N<\bar x_{N}^N;$$
\item Let the microscopic model evolve in time via the FtL;
\item Compare it with the solution of LWR starting from $\bar\rho$.
\end{itemize}

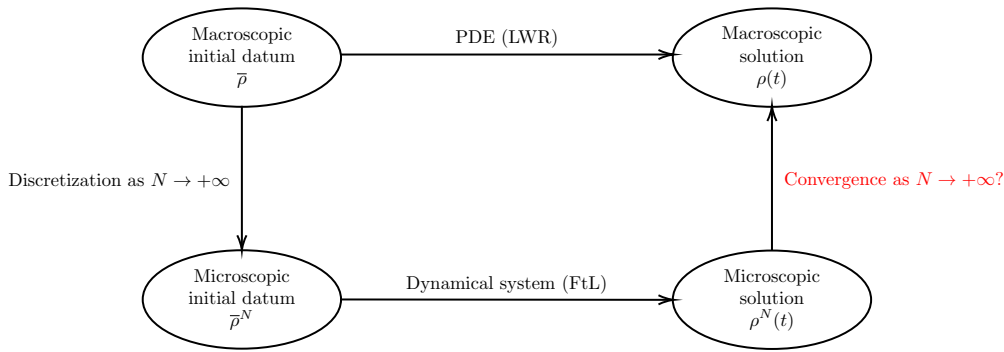
\begin{figure}[htb]
    \centering

\tikzset{every picture/.style={line width=0.75pt}} 

\begin{tikzpicture}[x=0.75pt,y=0.75pt,yscale=-0.7,xscale=0.7, trim left = -0.0cm]

\draw    (197,240.79) -- (433,241.07) ;
\draw [shift={(435,241.07)}, rotate = 180.07] [color={rgb, 255:red, 0; green, 0; blue, 0 }  ][line width=0.75]    (10.93,-3.29) .. controls (6.95,-1.4) and (3.31,-0.3) .. (0,0) .. controls (3.31,0.3) and (6.95,1.4) .. (10.93,3.29)   ;
\draw    (127,203.64) -- (127,149) -- (127,101.65) ;
\draw [shift={(127,205.64)}, rotate = 270] [color={rgb, 255:red, 0; green, 0; blue, 0 }  ][line width=0.75]    (10.93,-3.29) .. controls (6.95,-1.4) and (3.31,-0.3) .. (0,0) .. controls (3.31,0.3) and (6.95,1.4) .. (10.93,3.29)   ;
\draw    (505,205.64) -- (505,103.65) ;
\draw [shift={(505,101.65)}, rotate = 90] [color={rgb, 255:red, 0; green, 0; blue, 0 }  ][line width=0.75]    (10.93,-3.29) .. controls (6.95,-1.4) and (3.31,-0.3) .. (0,0) .. controls (3.31,0.3) and (6.95,1.4) .. (10.93,3.29)   ;
\draw    (197,64.79) -- (433,64.79) ;
\draw [shift={(435,64.79)}, rotate = 180] [color={rgb, 255:red, 0; green, 0; blue, 0 }  ][line width=0.75]    (10.93,-3.29) .. controls (6.95,-1.4) and (3.31,-0.3) .. (0,0) .. controls (3.31,0.3) and (6.95,1.4) .. (10.93,3.29)   ;

\draw    (127, 67) circle [x radius= 70.71, y radius= 35.36]   ;
\draw (127,67) node  [font=\footnotesize,xscale=0.7,yscale=0.7] [align=left] {\begin{minipage}[lt]{68pt}\setlength\topsep{0pt}
\begin{center}
Macroscopic initial datum \\$\displaystyle \overline{\rho }$
\end{center}

\end{minipage}};
\draw    (505, 67) circle [x radius= 70.71, y radius= 35.36]   ;
\draw (505,67) node  [font=\footnotesize,xscale=0.7,yscale=0.7] [align=left] {\begin{minipage}[lt]{68pt}\setlength\topsep{0pt}
\begin{center}
Macroscopic solution \\$ $$\displaystyle \rho ( t)$
\end{center}

\end{minipage}};
\draw    (127, 240.79) circle [x radius= 70.71, y radius= 35.36]   ;
\draw (127,240.79) node  [font=\footnotesize,xscale=0.7,yscale=0.7] [align=left] {\begin{minipage}[lt]{68pt}\setlength\topsep{0pt}
\begin{center}
Microscopic initial datum \\$\displaystyle \overline{\rho }^{N}$
\end{center}

\end{minipage}};
\draw    (505, 241) circle [x radius= 70.71, y radius= 35.36]   ;
\draw (505,241) node  [font=\footnotesize,xscale=0.7,yscale=0.7] [align=left] {\begin{minipage}[lt]{68pt}\setlength\topsep{0pt}
\begin{center}
Microscopic solution \\$ $$\displaystyle \rho ^{N}( t)$
\end{center}

\end{minipage}};
\draw (40.86,155.5) node  [font=\footnotesize,xscale=0.7,yscale=0.7] [align=left] {\begin{minipage}[lt]{120.47pt}\setlength\topsep{0pt}
Discretization as $N \rightarrow +\infty $
\end{minipage}};
\draw (594.05,155.5) node  [font=\footnotesize,color={rgb, 255:red, 255; green, 0; blue, 0 }  ,opacity=1 ,xscale=0.7,yscale=0.7] [align=left] {\begin{minipage}[lt]{120.51pt}\setlength\topsep{0pt}
Convergence as $N \rightarrow +\infty$?
\end{minipage}};
\draw (316,52.93) node  [font=\footnotesize,xscale=0.7,yscale=0.7] [align=left] {\begin{minipage}[lt]{60.86pt}\setlength\topsep{0pt}
\begin{center}
 PDE (LWR)
\end{center}

\end{minipage}};
\draw (316,229.86) node  [font=\footnotesize,xscale=0.7,yscale=0.7] [align=left] {\begin{minipage}[lt]{110.16pt}\setlength\topsep{0pt}
\begin{center}
Dynamical system (FtL)
\end{center}

\end{minipage}};

\end{tikzpicture}

\caption{Problem statement}
\label{f-problem}

\end{figure}

The first rigorous proof of convergence of this large particle limit was established in~\cite{di2015rigorous}, in which a very natural but specific discretization is proposed. The theory is based on a form of $L^1$ convergence of a suitable miscroscopic-like density to $\bar \rho$. Our article, relying on this first result, provides a more general answer, by showing that other discretization schemes can be chosen. Roughly speaking, we show that some form of weak convergence of the microscopic-like density is sufficient.\\

We now formally describe the framework of our contribution. Consider an initial probability density $\bar\rho$ 
with compact support, that satisfies $\|\bar\rho\|_{L^\infty}\leq\rho_{max}:= 1$.
Fix $N\in\mathbb{N}$ and choose $$\overline x_{min}:= \bar x_0^N<\bar x_1^N<\dots<\bar x_{N-1}^N<\bar x_{N}^N=: \overline x_{max},$$ that can be interpreted as the initial positions of $N+1$ ordered vehicles with mass $l:=\frac1N$. Let them evolve according to the ODE
\begin{equation}
\label{FtL-0}
    \dot{x}^N_i = v\left(\frac{l}{x^N_{i+1} - x^N_i}\right),\qquad i=0,\dots,N-1,
\end{equation}
where the velocity function $v=v(\rho)$  satisfies the standing assumptions:
\begin{align}\tag{V1}\label{e-V1}
    v \in \Lip([0, \rho_{\max}])\  
    \ \text{with Lipschitz constant}\ L, \qquad v(\rho_{\max})=0, \qquad v'(\rho)\leq c<0 \ \ \ \text{for a.e.} \ \rho.
\end{align}
In some cases, an additional assumption will be required:
\begin{align}\tag{V2}\label{ass:furtheronv}
    \text{the map } [0,+\infty)\ni \rho \mapsto \rho\, v'(\rho) \in [0,+\infty) \text{ is non-increasing.}
\end{align}
Together with~\eqref{e-V1}, it implies the strict concavity of the map 
\begin{equation}
\label{eq:flux}
    \rho\mapsto f(\rho):=\rho\, v(\rho)\,.
\end{equation}
To close the system of ODEs~\eqref{FtL-0}, we prescribe the velocity of the first (leading) vehicle as the maximum possible 
velocity:
\begin{equation}
\label{FtL-0N}
    \dot{x}^N_N=v_{\max}:= v(0)\,.
\end{equation} 
One can view the quantity $l/(x^N_{i+1} - x^N_i)$ in~\eqref{FtL-0} as a discrete density, and consider as zero the value of the discrete density 
on the right of~$x_N^N$, since there is no other vehicle ahead of it.
Next, letting $x_i^N(t)$, $i=0,\dots,N$, denote the corresponding solutions of~\eqref{FtL-0}-\eqref{FtL-0N} with initial positions  $\bar x_i^N$,
one can define the discretized Eulerian density as
\begin{align}
\label{constructionofrhoN}
    \rho^{E,N}(t,x) \coloneqq \sum_{j=0}^{N-1} \frac{l}{x^N_{j+1}(t) - x^N_j(t)}\, \chi_{[x^N_j(t), x^N_{j+1}(t))}(x) \qquad x \in \mathbb{R},
\end{align}
where $\chi_A$ is the indicator function of a set $A$. Then, it is shown in~\cite{di2015rigorous} that, for a precise discretization scheme (that we recall in \eqref{eq:DFR-def} below), one has convergence in $L_\loc^1([0,+\infty) \times \mathbb{R}; [0,1])$ of  $\seq{\erho(t,x)}$ to the weak entropy solution $\rho(t,x)$ of the Cauchy problem for the LWR model 
\begin{equation}
    \label{lwr}
\begin{cases}
    \rho_{t}+f(\rho)_{x}=0, \quad\ t>0, \quad x \in \mathbb{R} \\
    \noalign{\smallskip}
\rho(0, x)=\bar{\rho}(x) \qquad x \in \mathbb{R},
\end{cases}
\end{equation}
with the flux $f(\rho)$ as in~\eqref{eq:flux}.
Notice that, by construction, here the 
 initial discretized density $\seq{\erho(0)}$ converges in $L^1(\mathbb{R})$ to the initial density $\bar\rho$.
A similar result was obtained in~\cite{holdenrisebronum,holdenrisebrohoftl}
for traffic density uniformly away from vacuum,
assuming the $L^1$ convergence of the inverse
Lagrangian discrete density (see Section \ref{s:lit}).

As explained above, in this paper we address the following 
questions: 
\begin{itemize}
[leftmargin=18pt]
    \item
which properties of the initial positioning of the vehicles and of the convergence of the discretized initial data ensure the convergence of the microscopic density $\rho^{E,N}$ to the macroscopic one $\rho$ as $N\to\infty$ ?
\item which kind of stability is enjoyed by these discretization schemes? 
\end{itemize}
An answer to these questions sheds light on the range of applicability, on the accuracy and on the robustness (with respect to errors, 
gaps in data collection and oscillations) of the many particle limit in the context of traffic flow. Moreover, from then modelling point of view, the analysis
of the discrete-to-continuum limit provides the theoretical background to reconstruct the traffic state of a region 
through  data collected from stationary detectors and GPS devices.
On the other hand, these results can be applied to 
validate the adoption of macroscopic LWR model 
in cases where the use of microscopic dynamics is better justified than the macroscopic one. 

Our results are all formulated for initial discretization schemes that have uniformly bounded support. Namely, we shall require that
the initial positions $x_i^N(0)=\bar x_i^N$ 
of all vehicles 
are contained in a fixed bounded set. 
\begin{definition}[Condition of uniformly bounded initial support]
    We say that $\{x_j^N(t)\}_{j=0}^N$ satisfies the condition of uniformly bounded initial support  if there exists a bounded set $K$
    such that  there holds
\begin{align}\label{cond:ubis}
     x_i^N(0)\in K
    \qquad\forall~i=0,\dots, N,\quad \forall~N \in \mathbb{N}.
\end{align}
\end{definition}
%


The first main result of this paper basically shows that
we can replace the requirement of $L^1$
convergence of the initial discretization
(present both in~\cite{di2015rigorous} and in~\cite{holdenrisebronum,holdenrisebrohoftl})
with weak convergence.

\begin{theorem}\label{thm:micromacrointro}
Assume that the velocity map $v$ satisfies \eqref{e-V1}. 
    Let  $\bar{\rho} \in L^\infty(\mathbb{R}; [0,1])$ be with compact support and such that $\normone{\bar{\rho}}{\mathbb{R}}=1$.  Let $\{x_j^N(t)\}_{j=0}^N$ be solutions of the FtL system~\eqref{FtL-0}, \eqref{FtL-0N}, that moreover satisfy the condition of uniformly bounded initial support \eqref{cond:ubis}. Consider the corresponding Eulerian discrete density $\erho \in L^{\infty}\big([0,+\infty) \times \mathbb{R}; [0,1]\big)$ defined by \eqref{constructionofrhoN}. Assume that
     \begin{align}\label{ass:firsthypo}
        \erho(0)  \rightharpoonup \bar{\rho}\qquad\text{weak\,}^*\ \ \text{in}\ \ L^\infty(\mathbb{R}),
    \end{align}
and that one of the two following conditions hold:
\begin{enumerate}
    \item[(H1)]\label{thm:case1} $\bar{\rho}\in BV(\mathbb{R})$ 
    and there exists $C > 0$ 
    such that $\totvar(\erho(0); \mathbb{R}) < C$ 
for all $N$, i.e. such that
    \begin{align}\label{hyp: initial discrete TV bounded}
        \left(\frac{1}{x^N_1(0) - x^N_0(0)} + \frac{1}{x^N_N(0) - x^N_{N-1}(0)} + \sum_{j=0}^{N-2}\left|\frac{1}{x^N_{j+2}(0) - x^N_{j+1}(0)} - \frac{1}{x^N_{j+1}(0) - x^N_j(0)}\right| \right) < N\,C,
    \end{align}
    for all $N$;
    
    \item[(H2)] \label{thm:case2} the velocity function $v$ satisfies~\eqref{ass:furtheronv}.
\end{enumerate}
Then the sequence $\seq{\erho}$ converges 
in $L_\loc^1([0,+\infty) \times \mathbb{R}; [0,1])$ to the weak entropy solution $\rho$ of the Cauchy problem \eqref{lwr}.
\end{theorem}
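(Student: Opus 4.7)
My plan is to combine a compactness argument with identification of the limit as the unique Kru\v{z}kov entropy solution of (\ref{lwr}). The initial positions have uniformly bounded support, and since $v(\rho_{\max}) = 0$ prevents collisions in the FtL dynamics, $\rho^{E,N}(t,\cdot)$ takes values in $[0,1]$ and is supported in a fixed compact set of $\mathbb{R}$ on every bounded time interval $[0,T]$. The task then reduces to extracting a subsequence converging in $L^1_{\loc}$ and to showing that the limit satisfies (\ref{lwr}) with initial datum $\bar\rho$.

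The core step is obtaining a uniform spatial BV bound for $\rho^{E,N}(t,\cdot)$. Under (H1), I would show that the discrete total variation of $\rho^{E,N}(t,\cdot)$ is non-increasing along the FtL flow: differentiating the discrete densities $\rho_i^N = l/(x_{i+1}^N - x_i^N)$ in time yields a three-point finite-difference system for $\dot{\rho}_i^N$ with the monotonicity structure of a consistent scheme for (\ref{lwr}), so that computing $\frac{d}{dt}\sum_i |\rho_{i+1}^N - \rho_i^N|$ gives a telescoping estimate with non-positive sign. This turns the hypothesis (\ref{hyp: initial discrete TV bounded}) into a uniform-in-$t$ BV bound. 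Under (H2), the condition $\rho v'(\rho)$ non-increasing produces a one-sided Lipschitz (Oleinik-type) estimate for $\rho^{E,N}(t,\cdot)$ when $t > 0$, the discrete analogue of Hoff's smoothing effect for strictly concave flux. It yields a uniform BV bound on $[t_0,T]\times \mathbb{R}$ for every $t_0 > 0$, at the cost of losing control up to $t=0$.

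Coupled with the temporal equicontinuity estimate
\[
\|\rho^{E,N}(t+h,\cdot) - \rho^{E,N}(t,\cdot)\|_{L^1(\mathbb{R})} \le v_{\max}\, h\, \totvar(\rho^{E,N}(t,\cdot);\mathbb{R}),
\]
which is a standard consequence of the conservative form of the FtL ODEs, Helly's compactness theorem yields a subsequence $\rho^{E,N_k}$ converging in $L^1_{\loc}$ (of $(0,+\infty)\times\mathbb{R}$ under (H2), of $[0,+\infty)\times\mathbb{R}$ under (H1)) to some $\rho^\star$. To identify $\rho^\star$, I would rewrite the FtL system as a discrete balance law $\partial_t \rho^{E,N} + \partial_x F^N = 0$ with numerical flux $F^N$ uniformly close to $f(\rho^{E,N})$; strong $L^1_{\loc}$ convergence and continuity of $v$ then pass this to $\partial_t \rho^\star + \partial_x f(\rho^\star) = 0$ in the sense of distributions. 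For entropy admissibility, (H1) allows a Kru\v{z}kov-type doubling-of-variables argument at the discrete level, while under (H2) the one-sided Lipschitz estimate on $\rho^\star$ alone implies the Oleinik entropy condition, which for strictly concave $f$ is equivalent to Kru\v{z}kov admissibility.

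The remaining point, which I expect to be the most delicate step (especially under (H2), where compactness is not available up to $t=0$), is to match the initial datum $\rho^\star(0,\cdot)=\bar\rho$. Here the weak-$*$ hypothesis (\ref{ass:firsthypo}) is the crucial input: testing the equation $\partial_t \rho^{E,N} + \partial_x F^N = 0$ against smooth compactly supported functions over time windows $[0,\delta]$, and using that $f(\rho^{E,N})$ is uniformly bounded in $L^\infty$, one pushes the trace of $\rho^\star$ at $t=0$ against $\bar\rho$ in the distributional sense; together with the $L^\infty$ bound this gives $\rho^\star(0,\cdot)=\bar\rho$ almost everywhere. Uniqueness of the Kru\v{z}kov entropy solution then upgrades subsequential convergence to full-sequence convergence, completing the proof.
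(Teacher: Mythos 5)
Your proposal is sound in outline, but it follows a genuinely different route from the paper at the compactness step. You take the classical path of~\cite{di2015rigorous,fagioli2017}: uniform spatial BV bounds (TV decay under (H1), the discrete Oleinik estimate under (H2)), temporal equicontinuity, Helly compactness, a Lax--Wendroff-type consistency argument to identify the limit as a weak solution, a discrete entropy inequality, and finally uniqueness. The paper deliberately avoids space--time compactness machinery: it proves directly that $\seq{\erho}$ is Cauchy in $L^1([0,T]\times\R)$ by interpolating between the already-known Wasserstein convergence of $\erho(t)$ (i.e.\ the $L^1$ convergence of the cumulative distributions $F^N$, inherited from~\cite{di2015rigorous} via Helly on the pseudo-inverses) and the same TV bounds you invoke. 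The key elementary tools are the inequality $\norminfty{g}{\R}\le\sqrt{\normone{g}{\R}}$ for $1$-Lipschitz $g$ applied to $F^N-F^M$, and the integration by parts
\begin{equation*}
\int_{\R}\big(\erho(t)-\erhoM(t)\big)^2\,dx\ \le\ \norminfty{F^N(t)-F^M(t)}{\R}\,\Big[\totvar\big(\erho(t)\big)+\totvar\big(\erhoM(t)\big)\Big].
\end{equation*}
This buys a quantitative Cauchy estimate (which the paper reuses to extract the $N^{-1/4}$ convergence rate in Proposition~\ref{DiFra-Ros-scheme}) and dispenses with both temporal $L^1$ equicontinuity and the generalized Aubin--Lions lemma. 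Your Step~2 (identification of the limit and of the initial datum) coincides in substance with the paper's, which likewise leans on the weak$^*$ hypothesis~\eqref{ass:firsthypo} and on the Chen--Rascle uniqueness theorem to absorb the loss of control at $t=0$ under (H2).

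Two places in your argument need more care than you give them. First, the temporal estimate $\normone{\erho(t+h)-\erho(t)}{\R}\le v_{\max}\,h\,\totvar(\erho(t))$ is not an immediate consequence of a ``conservative form'' of the FtL system: $\erho$ does not satisfy an exact local conservation law with flux $f(\erho)$, and the ODE for $\rho^N_j$ carries a factor $N$. A correct substitute is either the Wasserstein equicontinuity in time proved in~\cite{di2015rigorous} combined with the Aubin--Lions generalization, or the same $W_1$--$TV$ interpolation the paper uses, which yields a $\sqrt{h}$ modulus --- still enough for Helly, but not the estimate you wrote. Second, under (H2) your claim that testing the balance law on $[0,\delta]$ ``gives $\rho^\star(0,\cdot)=\bar\rho$ almost everywhere'' overstates what the distributional computation delivers: without BV control near $t=0$ you only obtain weak attainment of the datum, and you must then invoke the Chen--Rascle result (genuine nonlinearity giving a strong trace at $t=0^+$) to conclude uniqueness, exactly as the paper does via Theorem~\ref{chenrascle}. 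Under (H1) your scheme closes cleanly, since weak$^*$ convergence plus the uniform TV bound and bounded supports upgrade $\erho(0)\to\bar\rho$ to strong $L^1$ convergence, so the Kru\v{z}kov inequality with the initial term follows.
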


    \begin{remark}
    The proof 
    of convergence of the sequence of 
    Eulerian discrete density $\seq{\erho}$\,
    is based on an estimate of the $L^1$ Cauchy property of
    $\seq{\erho}$ in terms of the 
    $L^1$ Cauchy property of the
    cumulative distribution associated to $\rho^{E,N}$.
    Then one can conclude relying only
    on the
    convergence of the cumulative and pseudoinverse functions associated to $\rho^{E,N}$, and on the
    $1$-Wasserstein convergence of $\seq{\erho}$
    that were established in~\cite{di2015rigorous}.
    This proof is
    simpler than the one presented in \cite[Theorem 3]{di2015rigorous}, where the authors achieve the $L^1$-compactness of $\seq{\erho}$ 
    taking advantage also of the  Wasserstein  equicontinuity of 
    $\rho^{E,N}(t)$, which allows to 
    apply a generalization of the Aubin-Lions lemma. 
\end{remark}

The second main contribution of this paper is
 a stability result with respect to the 1-Wasserstein distance $W_1$.  It is a microscopic stability result for the evolution of two different initial discretization schemes, which in turn 
 yields a stability result 
 with respect to the $L^1$ norm that is uniform in time. 
Such a result is rather surprising in view of the 
instability of the FtL dynamics.

\begin{theorem}[Discrete Eulerian Stability Theorem]\label{thm:stabilitytheoremintro}
     Assume that the velocity map $v$ satifies \eqref{e-V1}. Let $\{x_j^N(t)\}_{j=0}^N$,$\{\tilde{x}_j^N(t)\}_{j=0}^N$ be solutions of the FtL system \eqref{FtL-0}-\eqref{FtL-0N}, 
     that moreover satisfy the condition of uniformly bounded initial support \eqref{cond:ubis}. 
     Consider the corresponding Eulerian discrete densities $\erho$, $\tilde{\rho}^{E,N} \in L^{\infty}\big(([0,+\infty) \times \mathbb{R});[0,1]\big)$ defined by \eqref{constructionofrhoN}.  
     Then, for all $T>0$, and for all $N \in \mathbb{N}$, there holds
\begin{equation}
\label{eq:wasse-stab}
\begin{aligned}
    \sup_{t \in [0,T]}W_1(\rho^{E,N}(t), \tilde{\rho}^{E,N}(t)) &\leq W_1(\rho^{E,N}(0), \tilde{\rho}^{E,N}(0)) + \\
    &\ \ +2LT\sum_{j=0}^{N-1}|x_{j+1}(0) - x_j(0) - (\tilde{x}_{j+1}(0) - \tilde{x}_j(0)) |,
\end{aligned}
\end{equation}  
where $L$ is the Lipschitz constant of $v$.
Moreover, 
if there holds $x_N^N(0) = \tilde{x}_N^N(0)$ for all $N \in \mathbb{N}$, and
    \begin{align}\label{stabilityhypo}
         \sendlim{\sum_{j=0}^{N-1}|x_{j+1}(0) - x_j(0) - (\tilde{x}_{j+1}(0) - \tilde{x}_j(0)) |},
    \end{align}
then the following two properties are satisfied:
\begin{itemize}
  \item[(i)] if there exists $C > 0$ 
    such that
    $\totvar \left(\erho(0); \mathbb{R}\right),\totvar \left(\erhotilde(0); \mathbb{R}\right) < C$ for all $N$, then for all $T > 0$ there holds
\begin{align}
\label{unifconv-bvbdd}
      \sendlim{\sup_{\,t \in [0,T]}\normone{\rho^{E,N}(t) - \tilde{\rho}^{E,N}(t)}{\mathbb{R}}};
\end{align}

      \item[(ii)] if the velocity $v$ satisfies \eqref{ass:furtheronv}, then for all $T > 0$ 
      there holds
\begin{align}
\label{unifconv-bvinf}
      \sendlimk{\sup_{\, t \in [1/k,\, T]}\normone{\rho^{E,N_k}(t) - \tilde{\rho}^{E,N_k}(t)}{\mathbb{R}}},
\end{align}
for some subsequences $\{{\rho}^{E,N_k}\}_k$\,, $\{\tilde{\rho}^{E,N_k}\}_k$\,.
\end{itemize}

\end{theorem}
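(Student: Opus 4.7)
The approach has two stages: first establish the Wasserstein estimate \eqref{eq:wasse-stab} via a Lagrangian computation, then convert it to an $L^1$ bound by a BV-type interpolation and, for (ii), a discrete Oleinik estimate. For each $N$ and $t\geq 0$, introduce the pseudo-inverses $X^N(t,\cdot),\tilde X^N(t,\cdot)\colon[0,1]\to\R$ of the cumulative distribution functions of $\rho^{E,N}(t)$ and $\tilde\rho^{E,N}(t)$; since the Eulerian densities are piecewise constant with mass $l=1/N$ per cell, $X^N$ is continuous and piecewise linear with $X^N(t,j/N)=x_j^N(t)$. The identity $W_1(\rho^{E,N}(t),\tilde\rho^{E,N}(t))=\|X^N(t)-\tilde X^N(t)\|_{L^1([0,1])}$ together with time differentiation (noting that $\partial_t X^N$ is the piecewise linear interpolation of the velocities $\dot x_j^N=v(l/y_j^N)$, and that $\dot x_N^N=v_{\max}$ cancels with $\dot{\tilde x}_N^N$) yields $\tfrac{d}{dt}W_1\leq\tfrac{C}{N}\sum_{j=0}^{N-1}|v(l/y_j^N)-v(l/\tilde y_j^N)|$. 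Using the density bound $\rho^{E,N},\tilde\rho^{E,N}\leq 1$ (equivalent to $y_j^N,\tilde y_j^N\geq l$) combined with the Lipschitz continuity of $v$, one has $|v(l/y_j)-v(l/\tilde y_j)|\leq L\,l\,|y_j-\tilde y_j|/(y_j\tilde y_j)\leq LN|y_j^N-\tilde y_j^N|$, hence $\tfrac{d}{dt}W_1\leq CL\sum_{j}|y_j^N(t)-\tilde y_j^N(t)|$.

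The hardest step is to prove that $S(t):=\sum_j|y_j^N(t)-\tilde y_j^N(t)|$ is non-increasing in time. Writing $Z_j=y_j-\tilde y_j$ and $W_j=v(l/y_j)-v(l/\tilde y_j)$, the FtL equations give $\dot Z_j=W_{j+1}-W_j$ for $j=0,\dots,N-1$ with the convention $W_N:=0$ (both leading vehicles travel at $v_{\max}$). A summation by parts yields $\tfrac{d}{dt}S=-|W_0|+\sum_{k=1}^{N-1}\bigl(\sign(Z_{k-1})-\sign(Z_k)\bigr)W_k$. The crucial ingredient is the strict monotonicity of $v$ (assumption \eqref{e-V1}), which forces $W_k$ to have the same sign as $Z_k$: hence the boundary term is nonpositive and each sign-change term contributes $-2|W_k|\leq 0$. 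Therefore $S(t)\leq S(0)$, and integrating the differential inequality of the previous paragraph over $[0,T]$ produces \eqref{eq:wasse-stab}.

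For (i), under $x_N^N(0)=\tilde x_N^N(0)$ and \eqref{stabilityhypo}, the pseudo-inverse representation also gives $W_1(\rho^{E,N}(0),\tilde\rho^{E,N}(0))\leq\sum_j|y_j^N(0)-\tilde y_j^N(0)|\to 0$, so \eqref{eq:wasse-stab} forces $\sup_{t\in[0,T]}W_1(\rho^{E,N}(t),\tilde\rho^{E,N}(t))\to 0$. To pass to $L^1$, I invoke the standard inequality $\|\rho-\tilde\rho\|_{L^1(\R)}^2\leq 2\bigl(\totvar(\rho)+\totvar(\tilde\rho)\bigr)W_1(\rho,\tilde\rho)$, valid for compactly supported densities of equal mass, combined with the fact that $\totvar(\rho^{E,N}(t))$ is non-increasing along FtL; this yields \eqref{unifconv-bvbdd}. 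For (ii), no uniform initial BV bound is available, but assumption \eqref{ass:furtheronv} makes $f$ strictly concave and thereby produces a discrete Oleinik-type estimate $\totvar(\rho^{E,N}(t))\leq C/t$ uniform in $N$. On $[1/k,T]$ this reads $\totvar\leq Ck$, and the same interpolation gives $L^1$ convergence on each window $[1/k,T]$. A diagonal extraction in $k$ then produces the subsequence $\{N_k\}$ in \eqref{unifconv-bvinf}.

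The main obstacle is the $\ell^1$ contraction of the second paragraph: it is the FtL counterpart of Kruzhkov's $L^1$ contraction for entropy solutions and hinges crucially on the strict monotonicity of $v$. The remaining ingredients—differentiating $W_1$ through the pseudo-inverse, the BV-$L^1$-$W_1$ interpolation inequality, the discrete Oleinik regularization, and the diagonal extraction—are more standard technical pieces.
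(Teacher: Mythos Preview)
Your plan is correct and follows the same architecture as the paper's Section~\ref{sectionmainstability}: derive \eqref{eq:wasse-stab} from the pseudo-inverse representation of $W_1$ together with an $\ell^1$ contraction for the inverse discrete densities, then upgrade $W_1$-smallness to $L^1$-smallness via a BV--Wasserstein interpolation, with the discrete Oleinik estimate supplying the BV bound in case (ii) and a diagonal extraction producing the subsequence. Two technical differences are worth recording. First, the paper proves the contraction $\sum_j|y_j^N(t)-\tilde y_j^N(t)|\leq\sum_j|y_j^N(0)-\tilde y_j^N(0)|$ by Kru\v{z}kov's doubling of variables (Proposition~\ref{contractinvrho}), whereas your direct summation-by-parts, using that $v$ decreasing forces $\sign\bigl(v(\rho_k)-v(\tilde\rho_k)\bigr)=\sign(y_k-\tilde y_k)$, is shorter and more transparent. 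Second, for the $L^1$ step the paper does not invoke your Gagliardo--Nirenberg-type bound $\|\rho-\tilde\rho\|_{L^1}^2\leq C\bigl(\totvar\rho+\totvar\tilde\rho\bigr)W_1(\rho,\tilde\rho)$; instead it chains H\"older on the uniformly bounded support, the integration-by-parts identity $\|\rho-\tilde\rho\|_{L^2}^2\leq\|F_\rho-F_{\tilde\rho}\|_{L^\infty}\,\totvar(\rho-\tilde\rho)$, and the elementary fact that a compactly supported $1$-Lipschitz function satisfies $\|G\|_{L^\infty}\leq\sqrt{\|G\|_{L^1}}$ (Lemma~\ref{convl1impliesconvlinf}), arriving at $\|\rho-\tilde\rho\|_{L^1}^2\leq C\bigl(\totvar\rho+\totvar\tilde\rho\bigr)\sqrt{W_1}$. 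Your inequality is sharper in the $W_1$ exponent, while the paper's route is entirely self-contained; either suffices here.
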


\begin{remark}
If $x_N^N(0) = \tilde{x}_N^N(0)$ for all $N$ and there holds~\eqref{stabilityhypo},
then Proposition~\ref{lagrangianimplieswasserstein} below ensures 
\begin{equation*}
     \lim_{N\to +\infty}W_1(\rho^{E,N}(0), \tilde{\rho}^{E,N}(0))=0\,.
\end{equation*}
This in turn implies that $\rho^{E,N}(0)-\tilde{\rho}^{E,N}(0)\rightharpoonup 0$.
Thus, letting $\bar\rho, \tilde \rho$
denote the weak* limit of $\{\rho^{E,N}(0)\}_N$, \,
$\{\tilde{\rho}^{E,N}(0)\}_N$,
respectively, we have 
$\bar\rho=\tilde \rho$.
Hence, applying Theorem~\ref{thm:micromacrointro}
we deduce  that both sequences
$\seq{\erho}$, $\seq{\terho}$, 
converge in $L_\loc^1([0,+\infty) \times \mathbb{R})$ to the weak entropy solution  of the Cauchy problem \eqref{lwr}, which implies
\begin{equation}
\label{eq:discr-densities-conv}
      \sendlim{\normone{\rho^{E,N}(t) - \tilde{\rho}^{E,N}(t)}{\mathbb{R}}}
      \qquad \text{for \ a.e.}\ t>0\,.
\end{equation}
The main new property provided by Theorem~\ref{thm:stabilitytheoremintro} is the fact that, thanks to the stability estimate~\eqref{eq:wasse-stab},
the convergence in~\eqref{eq:discr-densities-conv} is actually uniform in time.

Notice also that property (i) of Theorem~\ref{thm:stabilitytheoremintro}  implies that, if $x_N^N(0) = \tilde{x}_N^N(0)$ for all $N$, and if we have a uniform bound on the total variation
of $\rho^{E,N}(0)$, $\tilde{\rho}^{E,N}(0)$, then
the assumption~\eqref{stabilityhypo}
 in particular yields  the $L^1$ convergence $\rho^{E,N}(0)-\tilde{\rho}^{E,N}(0)\rightarrow 0$.
\end{remark}

One final contribution of our work shows that a crucial question is still open. In Proposition~\ref{p:differentscheme-f}
    we give an example of a discretization scheme 
    that does not fulfill the assumption (H1)
    of Theorem~\ref{thm:micromacrointro}.
    Therefore, we cannot apply our result
    for such scheme
    in the case of fluxes $f(\rho)$ which are not concave.
    However, it is surprising to remark that the numerical simulations presented in Remark~\ref{rem:FtLnot convergingtoLWR} seem to suggest that the Eulerian discrete density defined with such a scheme
    exhibits essentially the same behavior of the
    one produced by the ones for which  Theorem~\ref{thm:micromacrointro} can be applied, ensuring convergence to solutions of LWR.
    This is an interesting phenomenon that shows that, in the case of non concave fluxes,  the relation between the convergence of  $\rho^{E,N}(0)$ to $\bar \rho$ and  of 
    $\rho^{E,N}(t)$ to the solution $\rho$ of~\eqref{lwr} has not yet been properly understood, and needs further investigation.\\

    The paper is organized as follows. In Section \ref{s:lit}, we compare our main results with the contributions of \cite{di2015rigorous} and \cite{holdenrisebronum,holdenrisebrohoftl}. In Section \ref{introftl} we recall the definition of the Follow-the-Leader dynamics and provide a stability result for it. In Section \ref{sectioneullagdefs} we define the Eulerian and Lagrangian discrete densities, their cumulative functions with the corresponding pseudo-inverses, and we discuss their properties and interpretations. In Section \ref{sectionmicromacro} we  prove the first main result of the article, i.e. Theorem \ref{thm:micromacrointro}. We also discuss 
in this section
an atomization scheme different from the ones in~\cite{di2015rigorous,holdenrisebronum,holdenrisebrohoftl}, which leads to an Eulerian discrete density that converges to the solution of the LWR model when the velocity $v$ satisfy the additional assumption (V2).
A numerical simulation indicating that this is not the case for  velocity $v$ that do not satisfy the assumption (V2) is also discussed in this section.
Finally, in Section~\ref{sectionmainstability} we establish the main stability result, i.e. Theorem \ref{thm:stabilitytheoremintro}.


\subsection{Comparison with the literature}\label{s:lit}
In this section, we compare our results with the most relevant other contributions in the field.

The main reference here is clearly \cite{di2015rigorous}. The main result there is Theorem 3, that provides the same convergence result under the following explicit discretization scheme: given $\bar\rho$, define 
\begin{equation}\label{eq:DFR-def-infpoint}
x_0^N:=\inf(\supp(\bar\rho))    
\end{equation}
and recursively 
\begin{equation}\label{eq:DFR-def}
{x}^N_j(0) \coloneqq \sup \left\{x \in \mathbb{R}: \quad \int_{x^N_{j-1}(0)}^x \bar{\rho}(y) dy < \frac{1}{N}\right\}, \qquad j=1,...,N.
\end{equation}
This amounts to split the subgraph of $\bar\rho$ into $N$ adjoining intervals of mass $l=1/N$ and to choose $x_i^N$ to be the extremes of these intervals. Remark that $x_0^N=\inf(\supp(\bar\rho))$ and $x_N^N=\sup(\supp(\bar\rho))$, for any $N$, i.e. that all discretizations share the initial and final points. Moreover, we will show in Proposition~\ref{DiFra-Ros-scheme} that this discretization ensures $L^1$-convergence of the Eulerian discrete density $\rho^{E,N}(0,x)$ to $\bar \rho$.

In our contribution, instead, we only require weak convergence of the Eulerian discrete density $\rho^{E,N}(0,x)$ to $\bar \rho$. In particular, it may well happen that $x_0^N(0)\neq \inf(\supp(\bar\rho))$
and $x_N^N(0)\neq \sup(\supp(\bar\rho))$. Since weak convergence does not provide information on the position of the initial and final point of the discretization scheme, we are forced to add the condition of uniformly bounded initial support \ref{cond:ubis}.

A very similar result is obtained for dense traffic regions (i.e. away from the vacuum) in~\cite[Theorem 2.5]{holdenrisebronum}, 
    \cite[Theorem 4.1]{holdenrisebrohoftl},
    where instead it is assumed:
    the $L^1$ convergence 
    of the inverse Lagrangian discrete density
    $y^{L,N}(0)$
    (see Definition~\ref{def:invdiscrldens} below) as $N\to\infty$; that 
    $y^{L,N}(0)$ has uniformly (in $N$) bounded total variation; and that the discrete density $\rho^{E,N}(0)$ is uniformly bounded away from zero. Moreover, in the same non-vacuum setting, \cite[Lemma 3.1]{ holdenrisebrohoftl} provides an $L^1$ stability estimate for different discretization schemes. In our contribution, in Theorem~\ref{thm:micromacrointro}  we are essentially providing a result showing that weak convergence implies strong convergence, even for initial data possibly containing vacuum regions.  In Theorem \ref{thm:stabilitytheoremintro}, we provide  the stability of two  different discretization schemes $\erho$ and $\erhotilde$, by
    exploiting the fact that weak convergence 
    combined with a control of the total variation
    implies strong convergence.   
    Weak convergence here is ensured by condition~\eqref{stabilityhypo}, which is based on the discretization scheme only.
    Such an hypothesis is assumed for instance in \cite[(2.11)]{holdenrisebronum} in the case of initial data $\bar{\rho} \in BV(\mathbb{R})$ away from vacuum.
    
Finally, in \cite[Theorem 3.6]{elioradicistra1}, the authors provide  a Cauchy property and the rate of convergence of a Eulerian microscopic density for non-local conservation laws. Also in this case, the result holds with a specific discretization scheme of the initial data $\bar{\rho} \in L^1(\mathbb{R}) \cap L^{\infty}(\mathbb{R})$, that satisfies $\bar{\rho}>0$ and $\int_{\mathbb{R}}|x|\bar{\rho}(x)dx < \infty$. This is given in the form of a microscopic stability between $\erho$ and $\rho^{E,M}$, for $M,N \in \mathbb{N}$ large enough. The main idea is that the Eulerian microscopic density is a quasi-entropy solution of the conservation law. The generalization of our results to  non-local conservation laws seems interesting, since they are somehow more naturally connected to microscopic dynamics, e.g. via the mean-field limit. This is a future research topic that we aim to address.

\section{The Follow-the-Leader model}\label{introftl}

In this section, we introduce the Follow-the-Leader (FtL) model and study its behaviour. It is a classical model for road traffic on a one-lane road with no overtaking, see e.g. \cite{10.2307/167610,gazis1961nonlinear}. The goal here is to investigate its stability properties with respect to the initial data. We first define the dynamics of the positions of vehicles $x_j(t)$, then consider the associated discrete density $\rho_j(t)$, and finally introduce the inverse discrete density $y_j(t)$. For each of these quantities, we analize the dynamics and some useful properties.

We start by considering $N+1$ vehicles, of length $l$, with initial positions \begin{equation}
\label{eq:initial-vehicles}
    \bar{x}^N_0 < \dots < \bar{x}^N_{N}
\end{equation} 
satisfying 
\begin{equation}
\label{eq:minimun-initial-distance}
\bar{x}^N_{i+1}-\bar{x}^N_i\geq l,\qquad\qquad\mbox{with}\quad 
    l\coloneqq \frac{1}{N}.
\end{equation}
This standard condition ensures non overlapping of vehicles. 

We now define the FtL dynamics.
\begin{definition}
\label{def:FtL}
The FtL dynamics is
\begin{equation}\label{ftl}
    \left\{
    \begin{aligned}
    &\dot{x}^N_{N} = v_{max}, \\
    &\dot{x}^N_j = v\left(\frac{l}{x^N_{j+1} - x^N_j}\right), \qquad \text{for }j=0,...,N-1,\\
    &x^N_j(0)=\bar{x}^N_j,\hspace{1.05in} \text{for }j=0,...,N,
    \end{aligned}
    \right.
\end{equation}
where the initial positions 
$\bar{x}^N_j$ satisfy conditions~\eqref{eq:initial-vehicles}-\eqref{eq:minimun-initial-distance}.
\end{definition}

The FtL model describes the evolution of each vehicle $x^N_j$, which adapts its speed with respect to the distance with the vehicle immediately in front $x^N_{j+1}$.
We now introduce the corresponding definition of discrete density and of its dynamics.
\begin{definition}
Given $\{x^N_j(t)\}_{j=0}^{N}$ a solution of \eqref{ftl}, define the discrete density as
 \begin{align}\label{defofrhoj}
     \rho^N_j(t) \coloneqq \frac{l}{x^N_{j+1}(t) - x^N_j(t)} \qquad j = 0,...,N-1.
 \end{align}
\end{definition}
Because of~\eqref{ftl}, the discrete density satisfies the dynamics
\begin{equation}
\label{ftlrho}
\left\{
     \begin{aligned}
      & \dot{\rho}^N_{N-1} =  -N(\rho^N_{N-1})^2\left(v_{\max}- v(\rho^N_{N-1}) \right), \\
      & \dot{\rho}^N_j =  N(\rho^N_j)^2\left( v(\rho^N_j) - v(\rho^N_{j+1}) \right),\qquad &\text{for }j=0,...,N-2, \\
      & \rho^N_j(0) = \bar{\rho}^N_j, \qquad &\text{for }j=0,...,N-1,
     \end{aligned}
     \right.
 \end{equation}
 where the initial data is 
 \begin{align*}
     \bar{\rho}^N_j \coloneqq \frac{l}{\bar{x}^N_{j+1} - \bar{x}^N_j}, \qquad\text{for } j = 0,...,N-1.
 \end{align*}
We finally consider the inverse discrete density introduced in~\cite{holdenrisebronum}.
\begin{definition}\label{def:defofinvlocaldens}
Given $\{x^N_j(t)\}_{j=0}^{N}$ a solution of \eqref{ftl}, define the inverse discrete  density as
    \begin{align}\label{def:defofyj}
    y^N_j(t) \coloneqq \frac{x^N_{j+1}(t) - x^N_j(t)}{l}= \frac{1}{\rho^N_j(t)} \qquad j = 0,...,N-1.
    \end{align} 
 \end{definition}
 Because of~\eqref{ftl}, the inverse discrete density satisfies the dynamics 
 \begin{equation}\label{ftly}
 \left\{
        \begin{aligned}
      & \dot{y}^N_{N-1} =  N\left(v_{\max} - V({y^N_{N-1}}) \right), \\
      & \dot{y}^N_j =  N\left(V(y^N_{j+1}) - V(y^N_{j}) \right),\qquad &\text{for } j=0,...,N-2 \\
      & y^N_j(0) = \bar{y}^N_j \coloneqq \frac{\bar{x}^N_{j+1}(t) - \bar{x}^N_j(t)}{l}, \qquad &\text{for } j = 0,...,N-1,
     \end{aligned}
     \right.
 \end{equation}
where the velocity of the inverse discrete density is defined by 
 \begin{eqnarray*}
     V(y) \coloneqq v\left(\frac{1}{y}\right).
 \end{eqnarray*}
 Here, the first equation of~\eqref{ftly}  prescribes that the inverse discrete density of the leading vehicle evolves with the maximum velocity
 \begin{align}
 \label{eq:veliddleader}
    V\big(y_N^N\big)=v(0)=v_{\max},
  \end{align}
  which could be viewed as setting \say{$y_N^N=+\infty$},
  corresponding to have an empty road in front of the leader~$x_N^N$.
As a consequence of \eqref{e-V1}, the velocity of the inverse discrete density satisfies the conditions
\begin{equation*}
    V \in \Lip([1, +\infty)) \  
    \ \text{with Lipschitz constant}\ L, \qquad V\left(1\right)=0,\qquad  V \ \text{is increasing}.
\end{equation*} 

 \begin{remark}[Discrete Minimum/Maximum Principle]
 The solution of the FtL model \eqref{ftl} and the corresponding discrete density \eqref{ftlrho} satisfy a discrete minimum/maximum principle. This is the microscopic version of the well-known maximum principle enjoyed by solutions to \eqref{lwr},see for example \cite[Theorem 6.2.7]{dafermos2000hyperbolic}. Indeed,  the following estimates hold:
\begin{equation}
\label{eq:discr-maxprinc}
    \begin{aligned}
&\min_{j=0,\ldots, N-1} (x^N_{j+1}(t)-x^N_{j}(t))~\geq \min_{j=0,\ldots, N-1} (\bar x^N_{j+1}-\bar x^N_{j})~\geq l;\\
&\max_{j=0,\ldots, N-1} (x^N_{j+1}(t)-x^N_{j}(t))\leq 
\bar x^N_N-\bar x^N_0+ t\, v_{\max}\,. \quad 
\end{aligned}
\end{equation}%
Thus, by virtue of\eqref{eq:minimun-initial-distance}-\eqref{defofrhoj}, we also
deduce \begin{equation}\label{eq:discr-maxprinc-density}
    \max_{j=0,\ldots N-1} \rho_j^N(t)\leq \max_{j=0,\ldots N-1} \bar \rho_j^N\leq 1\,.
\end{equation}
A proof of~\eqref{eq:discr-maxprinc} can be found in~\cite[Lemma 1]{di2015rigorous}.
Similarly, the solution of the discrete inverse density \eqref{ftly} satisfies a discrete minimum principle due to \eqref{eq:discr-maxprinc}. Indeed, it holds
\begin{equation*}
\min_{j=0,\ldots N-1} y_j^N(t)\geq \min_{j=0,\ldots N-1} \bar y_j^N\geq 1.
\end{equation*}
\end{remark}

 In the same spirit of~\cite[Lemma 2.3]{holdenrisebronum}, 
  we now prove a stability
  estimate for two different solutions of \eqref{ftlrho}.
  
   \begin{proposition}\label{contractinvrho}
 Consider two solutions $\{x^N_j(t)\}_{j=0}^{N}$, $\{\tilde{x}^N_j(t)\}_{j=0}^{N}$ of \eqref{ftl}, 
 with initial positions 
$\{\bar{x}^N_j\}_{j=0}^{N}$\,,
$\{\tilde{\bar{x}}^N_j\}_{j=0}^{N}$\,,
respectively. Let 
 $\{\rho^N_j(t)\}_{j=0}^{N-1}$, $\{\tilde{\rho}^N_j(t)\}_{j=0}^{N-1}$ be
 the  corresponding discrete densities defined by \eqref{defofrhoj},
 and let $\{y^N_j(t)\}_{j=0}^{N-1}$, $\{\tilde{y}^N_j(t)\}_{j=0}^{N-1}$ 
 be the corresponding inverse discrete densities defined by \eqref{def:defofyj}. Then, 
 there holds
 \begin{equation}\label{stabilityresultinv}
     \sum_{j=0}^{N-1}|\rho^N_j(t) - \tilde{\rho}^N_j(t)|~\leq  \sum_{j=0}^{N-1}|y^N_j(0) - \tilde{y}^N_j(0)|
     \qquad\forall~t\geq0\,.
 \end{equation}
 \end{proposition}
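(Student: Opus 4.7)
The plan is to establish the estimate in two independent steps. First, I reduce the claim for the discrete densities $\rho_j^N$ to the analogous claim for the inverse discrete densities $y_j^N$. Second, I prove an $\ell^1$-contraction for the $y$-system directly from its dynamics \eqref{ftly}.

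For the reduction, I note that by the discrete minimum principle recalled just above the statement, both $y_j^N(t)$ and $\tilde y_j^N(t)$ are bounded below by $1$. Since $\rho_j^N=1/y_j^N$ and $\tilde\rho_j^N=1/\tilde y_j^N$, we have the pointwise estimate
\begin{equation*}
    \bigl|\rho_j^N(t)-\tilde\rho_j^N(t)\bigr|=\frac{|y_j^N(t)-\tilde y_j^N(t)|}{y_j^N(t)\,\tilde y_j^N(t)}\leq \bigl|y_j^N(t)-\tilde y_j^N(t)\bigr|.
\end{equation*}
So it will suffice to prove
\begin{equation*}
\sum_{j=0}^{N-1}\bigl|y_j^N(t)-\tilde y_j^N(t)\bigr|\leq \sum_{j=0}^{N-1}\bigl|y_j^N(0)-\tilde y_j^N(0)\bigr|\qquad\forall\,t\geq 0.
\end{equation*}

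To prove this $\ell^1$-contraction I differentiate in time. Writing $s_j(t)=\sign(y_j^N(t)-\tilde y_j^N(t))$ and using that $V$ is strictly increasing (so $s_j=\sign(V(y_j^N)-V(\tilde y_j^N))$ whenever $y_j^N\neq\tilde y_j^N$), the system \eqref{ftly} yields, at points of differentiability,
\begin{equation*}
\frac{d}{dt}\sum_{j=0}^{N-1}|y_j^N-\tilde y_j^N|
= -N\bigl|V(y_{N-1}^N)-V(\tilde y_{N-1}^N)\bigr| + N\sum_{j=0}^{N-2}s_j\bigl[(V(y_{j+1}^N)-V(\tilde y_{j+1}^N))-(V(y_j^N)-V(\tilde y_j^N))\bigr].
\end{equation*}
The second term on the right splits. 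The $-s_j\,(V(y_j^N)-V(\tilde y_j^N))$ part equals $-|V(y_j^N)-V(\tilde y_j^N)|$ by monotonicity of $V$, while the $s_j\,(V(y_{j+1}^N)-V(\tilde y_{j+1}^N))$ part is bounded from above by $|V(y_{j+1}^N)-V(\tilde y_{j+1}^N)|$. After the index shift $j+1\mapsto j$ in the latter, the two sums telescope, and together with the boundary contribution one obtains a non-positive right-hand side, essentially $-N|V(y_0^N)-V(\tilde y_0^N)|\leq 0$.

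The only mildly delicate point is dealing with the possible non-differentiability of $|y_j^N-\tilde y_j^N|$ at instants where $y_j^N=\tilde y_j^N$. This I would handle in the standard Kruzhkov manner, either by replacing $|\cdot|$ with a smooth convex approximation $\eta_\varepsilon$, carrying out the same computation with $\eta_\varepsilon'$ in place of $\sign$ and passing to the limit $\varepsilon\to 0$, or by observing that the maps $t\mapsto |y_j^N(t)-\tilde y_j^N(t)|$ are locally Lipschitz and hence absolutely continuous, and checking the one-sided derivative inequality almost everywhere. Once the differential inequality is established, integrating in time and combining with the algebraic bound from Step 1 delivers \eqref{stabilityresultinv}.
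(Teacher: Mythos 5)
Your proof is correct, and it reaches the paper's key inequality by a more elementary route. The algebraic core is identical in both arguments: multiply the $y$-dynamics by $\sign(y_j-\tilde y_j)$, use the strict monotonicity of $V$ to turn $-s_j\,(V(y_j)-V(\tilde y_j))$ into $-|V(y_j)-V(\tilde y_j)|$ and to bound $s_j\,(V(y_{j+1})-V(\tilde y_{j+1}))$ by $|V(y_{j+1})-V(\tilde y_{j+1})|$, telescope, and absorb the boundary term coming from the leader (whose contribution is exactly $-N|V(y_{N-1})-V(\tilde y_{N-1})|$, cancelling the top of the telescope). The final reduction from $y$ to $\rho$ via $|\rho_j-\tilde\rho_j|=\rho_j\tilde\rho_j\,|y_j-\tilde y_j|\le|y_j-\tilde y_j|$, using the discrete maximum principle, is also exactly the paper's last step. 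Where you diverge is in the treatment of the nonsmoothness of $|\cdot|$: the paper runs a full Kruzkov doubling-of-variables argument, parametrizing the two solutions by separate times $t$ and $\tau$, pairing against test functions $\psi(\tfrac{t+\tau}{2})\eta_\epsilon(t-\tau)$ and mollifying, whereas you differentiate $\sum_j|y_j-\tilde y_j|$ directly in a single time variable and invoke absolute continuity of Lipschitz functions (or a smooth convex approximation of the modulus). For an ODE system with $C^1$ trajectories your route is legitimate and arguably cleaner: at a.e.\ time either $y_j\neq\tilde y_j$ and the chain rule applies, or $y_j=\tilde y_j$, in which case the derivative of $|y_j-\tilde y_j|$ (where it exists) vanishes while your claimed upper bound $N|V(y_{j+1})-V(\tilde y_{j+1})|-N|V(y_j)-V(\tilde y_j)|$ reduces to a nonnegative quantity, so the differential inequality holds a.e.\ and integrates. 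The doubling of variables buys nothing extra here; it is the natural device when solutions are merely $L^\infty$, as in the PDE setting the paper is emulating.
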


\begin{proof}
Throughout the proof we drop the superscript $N$ for simplicity of notation. We  consider two solutions of \eqref{ftly} 
parametrized by two different variables
$t$ and $\tau$, and  use
the Kruzkov's doubling of variables method to provide the contraction estimate for the inverse densities. We finally rely on the maximum principle for the discrete densities to conclude. With this aim, we define
 \begin{eqnarray*}
     &&V_j(t) \coloneqq V(y_j(t)), \qquad \tilde{V}_j(\tau) \coloneqq V(\tilde{y}_j(\tau)). 
 \end{eqnarray*}
 We then  notice that, for $j=0,...,N-2$ it holds
 \begin{align*}
     \deriv[t]|y_j(t) - \tilde{y}_j(\tau)| &= N \sign(y_j(t) - \tilde{y}_j(\tau))(V_{j+1}(t) - V_j(t)) \\
     \deriv[\tau]|y_j(t) - \tilde{y}_j(\tau)| &= N \sign(y_j(t) - \tilde{y}_j(\tau))(\tilde{V}_j(\tau) - \tilde{V}_{j+1}(\tau)).
 \end{align*}
 Therefore, we deduce that, for $j=0,...,N-2$, we have
 \begin{align}
     \left(\deriv[t] \right.&+ \left.\deriv[\tau]\right)|y_j(t) - \tilde{y}_j(\tau)|
     \nonumber\\
     &= N \sign (y_j(t) - \tilde{y}_j(\tau))[V_{j+1}(t) - V_j(t) - \tilde{V}_{j+1}(\tau) + \tilde{V}_j(\tau)] 
     \nonumber\\
     & = N \left[- \sign(y_j(t) - \tilde{y}_j(\tau)) (V_j(t) - \tilde{V}_j(\tau)) + \sign(y_{j+1}(t) - \tilde{y}_{j+1}(\tau)) (V_{j+1}(t) - \tilde{V}_{j+1}(\tau))\right.
     \nonumber\\
     &\qquad\left. + (V_{j+1}(t) - \tilde{V}_{j+1}(\tau))[\sign(y_j(t) - \tilde{y}_{j}(\tau)) - \sign(y_{j+1}(t) - \tilde{y}_{j+1}(\tau)) \right] 
     \nonumber\\
     & \leq N\left[- \sign(y_j(t) - \tilde{y}_j(\tau)) (V_j(t) - \tilde{V}_j(\tau)) + \sign(y_{j+1}(t) - \tilde{y}_{j+1}(\tau)) (V_{j+1}(t) - \tilde{V}_{j+1}(\tau)) \right].
     \label{eq:dtttauineq}
 \end{align}
 The last inequality can be recovered as follows:
 \begin{itemize}
     \item If
     \begin{align}
     \label{eq:case1-dv-ineq}
     y_j(t) \geq \tilde{y}_j(\tau) \text{ and }  y_{j+1}(t) \leq \tilde{y}_{j+1}(\tau),
 \end{align}
 then one has
 \begin{align*}
     V_{j+1}(t) - \tilde{V}_{j+1}(\tau) \leq 0,\qquad\quad \sign(y_j(t) - \tilde{y}_{j}(\tau)) - \sign(y_{j+1}(t) - \tilde{y}_{j+1}(\tau))\geq 0.
 \end{align*}
%
 \item
 If
 \begin{align}
  \label{eq:case2-dv-ineq}
     y_j(t) \leq \tilde{y}_j(\tau) \text{ and } y_{j+1}(t) \geq \tilde{y}_{j+1}(\tau),
 \end{align}
then one has
 \begin{align*}
     V_{j+1}(t) - \tilde{V}_{j+1}(\tau) \geq 0,\qquad\quad 
      \sign(y_j(t) - \tilde{y}_{j}(\tau)) - \sign(y_{j+1}(t) - \tilde{y}_{j+1}(\tau))\leq0.
 \end{align*}
%
 \item Otherwise, if neither~\eqref{eq:case1-dv-ineq} nor~\eqref{eq:case2-dv-ineq} are satisfied, then one has
 \begin{align*}
      \sign(y_j(t) - \tilde{y}_{j}(\tau)) - \sign(y_{j+1}(t) - \tilde{y}_{j+1}(\tau)) =0.
 \end{align*}
 
 \end{itemize}
Summing up the inequalities in~\eqref{eq:dtttauineq}, we find \begin{align*}
      \sum_{j=0}^{N-2}\left(\deriv[t] + \deriv[\tau]\right)|y_j(t) - \tilde{y}_j(\tau)| &\leq N \sign(y_{N-1}(t) - \tilde{y}_{N-1}(\tau) )[V_{N-1}(t) - \tilde{V}_{N-1}(\tau)].
 \end{align*}
 On the other hand , for $j=N-1$, it holds 
\begin{align*}
    \left(\deriv[t] + \deriv[\tau]\right)&|y_{N-1}(t) - \tilde{y}_{N-1}(\tau)|\\
     &= N \sign (y_{N-1}(t) - \tilde{y}_{N-1}(\tau))[v_{\max} - V_{N-1}(t) - v_{\max} + \tilde{V}_{N-1}(\tau) ]\\
     &=N \sign (y_{N-1}(t) - \tilde{y}_{N-1}(\tau))[\tilde{V}_{N-1}(\tau)  - V_{N-1}(\tau) ].
\end{align*}
Therefore, we conclude that
\begin{equation}
     \sum_{j=0}^{N-1}\left(\deriv[t] + \deriv[\tau]\right)|y_j(t) - \tilde{y}_j(\tau)|~\leq 0. \label{e-1}
\end{equation}
Relying on~\eqref{e-1}, we can complete the proof with
 the same arguments of the proof of~\cite[Lemma 2.3]{holdenrisebronum}. Namely, multiplying \eqref{e-1} by a non-negative test function $\phi(t,\tau)$ with $\phi \in C^\infty_0((0, \infty) \times (0, \infty))$, and then integrating by parts, one obtains
 \begin{align}
 \label{eq:e-2}
    \int_0^\infty \int_0^\infty (\phi_t + \phi_\tau)\sum_{j=0}^{N-1}|y_j(t) - \tilde{y}_j(\tau)|dtd\tau \geq 0.
 \end{align}
Next, choose 
\begin{align*}
    \phi(t, \tau) = \psi\left(\frac{t+\tau}{2}\right)\eta_{\epsilon}(t-\tau),
\end{align*}
where $\psi \in C^\infty_0((0, \infty)\times(0, \infty))$ is a non-negative function, and $\eta_\epsilon$ is a standard mollifier converging to the Dirac delta at the origin as $\epsilon\to 0$. Then, plugging this test function in~\eqref{eq:e-2} and sending $\epsilon \rightarrow 0$ we get
\begin{align}
 \label{eq:e-3}
    \int_0^\infty \psi'(t) \sum_{j=0}^{N-1}|y_j(t) - \tilde{y}_j(t)|dt \geq 0
\end{align}
Now, taking
 $\psi$ in~\eqref{eq:e-3} to be a smooth approximation of the characteristic function of the interval $(t_1,t_2) \subset (0, t)$ we get
\begin{align}
 \label{eq:e-4}
    \sum_{j=0}^{N-1}|y_j(t_2) - \tilde{y}_j(t_2)| \leq \sum_{j=1}^{N-1}|y_j(t_1) - \tilde{y}_j(t_1)|.
\end{align}
Then, 
letting $t_1 \rightarrow 0$ and $t_2 \rightarrow t$ in~\eqref{eq:e-4}, we obtain 
\begin{align}\label{invstab}
    \sum_{j=0}^{N-1}|y_j(t) - \tilde{y}_j(t)| \leq \sum_{j=1}^{N-1}|y_j(0) - \tilde{y}_j(0)|.
\end{align}
Finally, by using \eqref{invstab} and the maximum principle \eqref{eq:discr-maxprinc-density}, 
we find
\begin{align*}
    \sum_{j=0}^{N-1}|\rho_j(t) - \tilde{\rho}_j(t)| = \sum_{j=0}^{N-1}\rho_j(t)\tilde{\rho}_j(t)\left|y_j(t) - \tilde{y}_j(t)\right|&\leq \sum_{j=0}^{N-1}\left|y_j(t) - \tilde{y}_j(t)\right|\\
    &\leq \sum_{j=0}^{N-1}\left|y_j(0) - \tilde{y}_j(0)\right|,
\end{align*}
thus establishing~\eqref{stabilityresultinv}.
\end{proof}

\begin{remark}
In \cite{holdenrisebronum,holdenrisebrohoftl} the authors establish the contractive estimate~\eqref{invstab} 
assuming a uniform bound on the inverse discrete density $y_j^N(0)$
and on the total variation
of the corresponding inverse Eulerian discrete density $y^{E,N}$
(see Definition~\ref{def:invdiscrdens} below).
The estimates in~\cite{holdenrisebronum} were obtained
in two settings: 
\begin{itemize}
    \item either they assume to have  infinitely many equally spaced vehicles in front of the leading one located at $x_N^N$, 
with a distance $M / N$
 between two consecutive ones, 
for some constant $M>1$,

\item or they assume that  the location of the vehicles is periodic in an interval $[a,b]$, so that 
the distance between the vehicle located in $x_N^N$ and the one located at $x_1^N$ is $(b-x_N^N)+(x_1^N-a)$.
\end{itemize}
This corresponds to define
the inverse discrete density related to the leading vehicle as \begin{align*}
     y^N_N =
     \begin{cases}
      M &\text{in non-periodic case} \\
      N(b - x_N + x_1 - a) & \text{in periodic case}.
     \end{cases}
 \end{align*}
In the non-periodic setting this definition leads to prescribe the velocity 
 \begin{align*}
     V\big(y_N^N\big)= v\!\left(\frac{1}{M}\right)
 \end{align*}
for the inverse discrete density in front of the leader. 
Here, instead, we obtain the contractive estimate~\eqref{invstab}
by observing that $\dot{x}^N_{N} = v_{max}$ in ~\eqref{ftl} implies the first equation in ~\eqref{ftly} with $V$ given by~\eqref{eq:veliddleader}.
Therefore, Proposition~\ref{contractinvrho}  provides an extension of \cite[Lemma 2.3]{holdenrisebronum}, in the non-periodic setting, to the case 
``$M = +\infty$" corresponding to empty road ahead of the leader,
and removing any boundedness assumption on $y_j^N(0)$
and on the total variation of $y^{E,N}$.

\end{remark}

Finally, we recall the discrete Oleinik-type condition proved in \cite[Corollary 1 of Lemma~6]{di2015rigorous}
in the case of the particular discretizazion scheme considered therein,
which remains valid for a general discretization scheme. 
Such one-sided estimate yields
 the uniform bounds on the total variation of 
the discrete densities stated in Proposition~\ref{prop: contract Tot Var}-(ii) below.

\begin{lemma}[Discrete Oleinik-type condition]\label{oleinik}
    Consider a solution  $\{x^N_j(t)\}_{j=0}^{N}$ of \eqref{ftl}, and let   $\{\rho^N_j(t)\}_{j=0}^{N-1}$ be
 the  corresponding discrete density defined by \eqref{defofrhoj}.    
    Assume that $v$ satisfies \eqref{e-V1} and \eqref{ass:furtheronv}. Then, for any $j = 0, \ldots, N-2$, there holds
\begin{align*}
    \frac{v\big(\rho_{j+1}^N\big(t, x_{j+1}^N(t)\big)\big) - v\big(\rho_j^N\big(t, x_j^N(t)\big)\big)}{x_{j+1}^N(t)-x_j^N(t)}\leq\frac{1}{t}\qquad \forall \,t \geq 0.   
\end{align*}
\end{lemma}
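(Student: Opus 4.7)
My plan is to derive a discrete Riccati-type differential inequality for the finite differences $w_j(t) := \bigl(v(\rho_{j+1}^N(t)) - v(\rho_j^N(t))\bigr)/\bigl(x_{j+1}^N(t) - x_j^N(t)\bigr)$ and then compare with the scalar ODE $\dot y = -y^2$ whose solution is $1/t$. To treat the leading vehicle uniformly I would extend the indexing by the ghost $\rho_N^N := 0$, so that $w_{N-1} = \bigl(v_{\max} - v(\rho_{N-1}^N)\bigr)/\bigl(x_N^N - x_{N-1}^N\bigr)$ fits the same template. Set $g(\rho) := \rho\, v'(\rho)$: by \eqref{e-V1} one has $g \leq 0$, and by \eqref{ass:furtheronv} $g$ is non-increasing with the natural continuous extension $g(0) = 0$.

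First I would compute the dynamics of $w_j$. Using \eqref{ftl} together with $l = 1/N$, one verifies the identities $\dot\rho_j^N = -\rho_j^N\, w_j$ and $\dot a_j = a_j\, w_j$, where $a_j := x_{j+1}^N - x_j^N$, for every $j = 0,\ldots,N-1$ (the ghost convention absorbs the leader equation). The quotient rule then yields
\[
\dot w_j \;=\; \frac{g(\rho_j^N)\, w_j - g(\rho_{j+1}^N)\, w_{j+1}}{a_j} - w_j^2, \qquad j=0,\ldots,N-2,
\]
and the same formula at $j = N-1$, where $g(\rho_N^N) = 0$ collapses the bracket to $g(\rho_{N-1}^N)\, w_{N-1}$.

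The central step is to show that $M(t) := \max_{0 \leq j \leq N-1} w_j(t)$ satisfies $D^+ M(t) \leq -M(t)^2$ wherever $M(t) > 0$. Picking an index $j^*$ realising the maximum, the inequality $w_{j^*} > 0$ together with the strict monotonicity of $v$ forces $\rho_{j^*+1}^N < \rho_{j^*}^N$, and \eqref{ass:furtheronv} then yields $g(\rho_{j^*}^N) \leq g(\rho_{j^*+1}^N) \leq 0$. To check that the bracket $g(\rho_{j^*}^N) w_{j^*} - g(\rho_{j^*+1}^N) w_{j^*+1}$ is non-positive, I would split on the sign of $w_{j^*+1}$: if $w_{j^*+1} \geq 0$, the rewriting $g(\rho_{j^*}^N)(w_{j^*} - w_{j^*+1}) + \bigl(g(\rho_{j^*}^N) - g(\rho_{j^*+1}^N)\bigr) w_{j^*+1}$ makes each summand visibly non-positive (using $w_{j^*} \geq w_{j^*+1}$ for the first and $g(\rho_{j^*}^N) \leq g(\rho_{j^*+1}^N)$ for the second); if $w_{j^*+1} < 0$, then $g(\rho_{j^*}^N) w_{j^*} \leq 0$ and $-g(\rho_{j^*+1}^N) w_{j^*+1} \leq 0$ directly. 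The case $j^* = N-1$ is immediate from the collapsed bracket $g(\rho_{N-1}^N) w_{N-1} \leq 0$. Hence $\dot w_{j^*} \leq -w_{j^*}^2 = -M^2$, and the standard envelope estimate for the maximum of finitely many $C^1$ functions transfers this to $D^+ M \leq -M^2$ a.e. on $\{M > 0\}$.

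To conclude I would compare with the Riccati ODE. On any maximal open interval $(t_0, t_1)$ where $M > 0$, the inequality $(1/M)' \geq 1$ integrates to $1/M(t) \geq 1/M(s) + (t-s)$ for $t_0 < s \leq t < t_1$. If $t_0 > 0$, continuity of $M$ forces $M(t_0) = 0$, hence $1/M(s) \to +\infty$ as $s \downarrow t_0$, contradicting $M(t) < \infty$; therefore $t_0 = 0$, and letting $s \downarrow 0$ with $1/M(s) > 0$ gives $M(t) \leq 1/t$, which is even stronger than the stated bound (it also covers the ghost index $N-1$). The most delicate point will be the sign analysis of the bracket at the maximum together with the boundary case $j^* = N-1$; once those are settled, the remainder is a routine quotient-rule computation followed by a comparison with the explicit solution of $\dot y = -y^2$.
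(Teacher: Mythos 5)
Your proof is correct, and it is essentially the argument the paper is leaning on: the paper gives no proof of this lemma at all, deferring entirely to \cite[Corollary~1 of Lemma~6]{di2015rigorous}, and your derivation of the Riccati inequality $\dot w_{j^*}\le -w_{j^*}^2$ at a maximizing index (where the sign analysis of the bracket, using $w_{j^*}\ge w_{j^*+1}$ and the monotonicity of $\rho\mapsto\rho v'(\rho)$, is the genuinely non-trivial step and is carried out correctly, including the ghost index $j^*=N-1$) followed by comparison with $\dot y=-y^2$ is the standard route to such discrete Oleinik bounds. The only cosmetic caveat is that writing $\tfrac{d}{dt}v(\rho_j)=v'(\rho_j)\dot\rho_j$ pointwise presupposes slightly more than the Lipschitz regularity in \eqref{e-V1}, but this is an issue the paper inherits from its citation as well and not a gap specific to your argument.
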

\begin{proof}
    The proof in \cite[Corollary 1 of Lemma~6]{di2015rigorous} is completely independent of the initial profile $\{x^N_j(0)\}_{j=0}^{N}$, and on
    the initial discretization 
    $\{\rho^N_j(0)\}_{j=0}^{N}$. It depends solely on the dynamics given by \eqref{ftlrho}.
\end{proof}

\section{Eulerian and Lagrangian densities}\label{sectioneullagdefs}

In this section, we present several different densities that approximate the solution of \eqref{lwr}. They have a simple structure, being either piecewise constant or a combination of Dirac deltas.
This section is mainly based on the analysis developed in~\cite{di2015rigorous}. 


We first introduce the Eulerian discrete density, that can be understood as a discrete approximation of the solution of the LWR model \eqref{lwr}, based on the dynamics of the FtL~\eqref{ftl}. 
\begin{definition}
\label{def:eul-dens}
Given $\{x^N_j(t)\}_{j=0}^N$ solution of \eqref{ftl}, define the Eulerian discrete density as
\begin{align}\label{def:erho}
    \rho^{E,N}(t,x) \coloneqq \sum_{j=0}^{N-1} \rho^N_j(t) \chi_{[x^N_i(t), x^N_{i+1}(t))}(x),
\end{align}
where $\rho^N_j$ are defined by~\eqref{eq:minimun-initial-distance}-\eqref{defofrhoj}.
\end{definition}
Notice that the Eulerian discrete density can  be seen as a quasi-entropy solution of~\eqref{lwr}, as discussed in \cite{elioradicistra1}. We now define the inverse Eulerian discrete density and the (Dirac) empirical measure.
\begin{definition}
\label{def:invdiscrdens}
Given $\{x^N_j(t)\}_{j=0}^N$ solution of \eqref{ftl}, define the inverse Eulerian discrete density as
\begin{align*}
    y^{E,N}(t,x) \coloneqq \sum_{j=0}^{N-1} y^N_j(t) \chi_{[x^N_i(t), x^N_{i+1}(t))}(x),\qquad x\in\mathbb{R},
\end{align*}
and the (Dirac) empirical measure as
\begin{align}\label{def:drho}
    \rho^{D,N}(t,x) \coloneqq \frac{1}{N} \sum_{j=0}^{N-1} \delta_{x_j(t)}(x),
    \qquad x\in\mathbb{R},
\end{align}
where $y^N_j$ are defined by~\eqref{def:defofyj},
and $\delta_x$ denotes the Dirac delta at  point $x$.
\end{definition}

We finally define the Lagrangian discrete density and the inverse Lagrangian density. The latter can be understood as a piecewise constant approximation of the solution of the Lagrangian version of the LWR model, see \cite{holdenrisebronum}. We recall that $l=1/N$.
\begin{definition}
\label{def:invdiscrldens}
 Given $\{x^N_j(t)\}_{j=0}^N$ solution of \eqref{ftl}, define the Lagrangian discrete density as
\begin{align}\label{def:lrho}
   \rho^{L,N}(t,z) \coloneqq \sum_{j=0}^{N-1}\rho^N_j(t) \chi_{[jl,(j+1)l)}(z), 
   \qquad z\in[0,1],
\end{align}
and the inverse Lagrangian discrete density as
\begin{align}\label{def:ly}
     y^{L,N}(t,z) \coloneqq \sum_{j=0}^{N-1}y^N_j(t) \chi_{[jl,(j+1)l)}(z),
     \qquad z\in[0,1].
 \end{align}
\end{definition}

The coordinate $z \in [0,1]$ can be seen as a Lagrangian mass coordinate. As pointed out in \cite{holdenrisebronum}, the integer part of $\frac{z}{l}$ measures how many vehicles are located to the left of $z$. 

Notice that, while 
the $L^1$ norm 
of the Eulerian discrete density $\erho$ represents the total mass of vehicles, the $L^1$ norm 
of the inverse Lagrangian discrete density $y^{L,N}$ provides the measure of their support.
Indeed, given $\{x^N_j(t)\}_{j=0}^N$ solution of \eqref{ftl}, it holds
\begin{align}
\label{eq:Ldensl1norm}
    \normone{y^{L,N}(t)}{[0,1]} = 
    \sum_{j=0}^{N-1} y_j^{N}(t) \cdot l
    = \sum_{j=0}^{N-1}x^N_{j+1}(t) - x^N_j(t) = x^N_N(t) - x^N_0(t).
\end{align}
Therefore, if $\{x^N_j(t)\}_{j=0}^N$ satisfy the condition of uniformly bounded initial support \eqref{cond:ubis},
relying on the discrete maximum principle~\eqref{eq:discr-maxprinc}
we deduce that
the corresponding inverse Lagrangian discrete density $y^{L,N}(t)$ 
has a bound in $L^1([0,1])$
that is uniform with respect to~$N$, for all $t>0$.
 

The discrete Eulerian and Lagrangian densities enjoy
 a BV
contraction property in the case of initial data with bounded variation,
and uniform BV estimates
for initial data with velocity satisfying assumption (V2).
These results are
established in~\cite[Propositions 5-6]{di2015rigorous}
(see also~\cite[Propositions 1-3]{fagioli2017}),
and collected in the next proposition.

\begin{proposition}
\label{prop: contract Tot Var}
Assume that $v$ satisfies (V1).
    Let $\{x_j^N(t)\}_{j=0}^N$ be a solution of \eqref{ftl}. Consider the corresponding Eulerian discrete density $\erho \in L^{\infty}([0,+\infty) \times \mathbb{R}; [0,1])$ defined by \eqref{def:erho} and the Lagrangian discrete density $\lrho \in L^{\infty}([0,+\infty) \times [0,1])$ defined by \eqref{def:lrho}. 
    Then, the following hold:
    \begin{itemize}
        \item[(i)]\label{prop item: totvar decreasing}
        if $\bar \rho \in BV(\mathbb{R})$, then
    \begin{align*}
        \totvar \left(\erho (t);\,\mathbb{R}\right) = \totvar \left(\lrho (t);\,\mathbb{R}\right) \leq \totvar \left(\erho (0);\,\mathbb{R}\right)
        \qquad \forall~t \geq 0,
        \quad\forall~N \in \mathbb{N};
            \end{align*}
            \item[(ii)]
        if the velocity function $v$ satisfies (V2) and $\bar \rho \in L^\infty(\mathbb{R})$,
        then, 
        for any $\delta>0$ there exists a constant $C>0$,
        depending on 
        ~$\delta$, such that
        \begin{equation*}
            \sup_{t\geq\delta}
            \totvar \left(\erho (t);\,\mathbb{R}
            \right)\leq C,
            \qquad
            \sup_{t\geq\delta}
            \totvar \left(\lrho (t);\,\mathbb{R}
            \right)\leq C,
            \quad\ \forall~N \in \mathbb{N}.
        \end{equation*}
    \end{itemize}
\end{proposition}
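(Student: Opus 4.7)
The plan is to reduce both items to analyzing the single quantity
$$\totvar(\erho(t); \mathbb{R}) = \rho_0^N(t) + \rho_{N-1}^N(t) + \sum_{j=0}^{N-2}\bigl|\rho_{j+1}^N(t) - \rho_j^N(t)\bigr|,$$
which equals $\totvar(\lrho(t); \mathbb{R})$ because $\erho$ and $\lrho$ are piecewise constant functions taking the same ordered sequence of values $\rho_0^N, \ldots, \rho_{N-1}^N$ (only their supports differ). This immediately yields the equality in (i) and reduces both inequalities to a BV estimate on the discrete profile $\{\rho_j^N(t)\}$.

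For item (i), I would differentiate this sum in time using the ODE \eqref{ftlrho}, apply $\operatorname{sign}(\rho_{j+1}^N - \rho_j^N)$ to each summand, and rearrange via discrete Abel summation. The decisive ingredient is the monotonicity of $v$ from \eqref{e-V1}, which gives $\operatorname{sign}(\rho_{j+1}^N - \rho_j^N) \cdot [v(\rho_{j+1}^N) - v(\rho_j^N)] \leq 0$. Combined with $\dot\rho_{N-1}^N \leq 0$ (since $v_{\max} \geq v(\rho_{N-1}^N)$), this forces the rearranged expression to be non-positive term by term, so $\totvar(\erho(t); \mathbb{R})$ is non-increasing in $t$. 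This is the classical TVD argument for a monotone upwind-type discretization of a scalar conservation law; the main bookkeeping obstacle is ensuring the boundary contributions at $j=0$ and $j=N-1$ combine consistently with the inner telescoping and with the sign choices for $\rho_0^N$ and $\rho_{N-1}^N$ themselves.

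For item (ii), I would rely on the Oleinik-type bound from Lemma \ref{oleinik}. At every index $j$ with $\rho_j^N(t) > \rho_{j+1}^N(t)$, combining $v'\leq c<0$ with the Oleinik inequality gives
$$\rho_j^N(t) - \rho_{j+1}^N(t) \leq \frac{1}{|c|}\bigl(v(\rho_{j+1}^N(t)) - v(\rho_j^N(t))\bigr) \leq \frac{x_{j+1}^N(t) - x_j^N(t)}{|c|\, t}.$$
Summing over such indices bounds the positive variation of the profile by $(x_N^N(t) - x_0^N(t))/(|c|\, t)$. Since $\rho_j^N \in [0,1]$ and the profile has compact support, the negative variation differs from the positive variation by at most $\rho_0^N(t) + \rho_{N-1}^N(t) \leq 2$, so $\totvar(\erho(t); \mathbb{R})$ is controlled by twice the positive variation plus $2$. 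The discrete maximum principle \eqref{eq:discr-maxprinc} yields $x_N^N(t) - x_0^N(t) \leq (\bar x_N^N - \bar x_0^N) + t\, v_{\max}$, so for $t \geq \delta$ the right-hand side, once divided by $|c|\, t$, is bounded by $(\bar x_N^N - \bar x_0^N)/(|c|\,\delta) + v_{\max}/|c|$. Under the standing uniformly bounded initial support assumption this produces a constant $C=C(\delta)$ independent of $N$ and of $t\geq\delta$, and the same bound transfers to $\lrho$ by the equality of variations established in the first paragraph.
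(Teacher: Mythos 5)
Your proposal is correct and follows essentially the same route as the paper, which for (i) invokes the TVD sign/Abel-summation computation of \cite[Proposition 5]{di2015rigorous} and for (ii) combines the discrete Oleinik condition of Lemma~\ref{oleinik} with the support bound from \eqref{eq:discr-maxprinc} and the uniformly bounded initial support, exactly as you do. The only cosmetic difference is in (ii): the paper first bounds $\totvar\left(v(\erho(t));\mathbb{R}\right)$ by $3v_{\max}+2\meas(K)/\delta$ and then passes to $\totvar\left(\erho(t);\mathbb{R}\right)$ via the boundedness of $(v^{-1})'$, whereas you convert each decreasing jump directly through $v'\leq c<0$ — the same estimate in a different order.
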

\begin{proof}
    We prove statement (i) by proving that 
    \begin{align*}
        \frac{d}{dt} \totvar \left(\erho (t)\right) \leq 0,\qquad \forall~t>0,\quad \forall~N\in  \mathbb{N}\,.
    \end{align*}
    It is sufficient to apply the exact same computations 
    of the proof of~\cite[Proposition 5]{di2015rigorous},
    which are indipendent on the particular
    initial discretization scheme $\{x^N_j(0)\}_{j=0}^N$.

    We prove statement (ii),
 by 
    following the exact same computations as in \cite[Proposition 6]{di2015rigorous}, that are as well indipendent on the particular
    initial discretization scheme $\{x^N_j(0)\}_{j=0}^N$. By  relying on Lemma~\ref{oleinik},
    we find
    that, for any $N \in \mathbb{N}$ and for all $t \geq \delta$, it holds
    \begin{align*}
            \totvar \left(v\left(\erho (t)\right);\,\mathbb{R}
            \right)=    \totvar \left(v\left(\lrho (t)\right);\,\mathbb{R}
            \right) &\leq \left[3 v_{\max} + 2 \frac{x^N_N(0) - x^N_0(0)}{\delta}\right]\\
            & \leq \left[3 v_{\max} + 2 \frac{\meas (K)}{\delta}\right].
        \end{align*}
    In the last inequality, $K$ is the bounded set such that \eqref{cond:ubis} holds. Since $v$ is invertible 
    and $(v^{-1})'$ is bounded 
    because of condition (V1), statement (ii) follows. 

\end{proof}

\subsection{Cumulative and pseudo-inverse functions} We now define the cumulative distribution of a function and the corresponding pseudo-inverse.

\begin{definition}\label{defofFandX} Consider the space of probability densities
\begin{align*}
    \mathcal{P}_c(\mathbb{R}) \coloneqq \{\rho \text{ Radon measure on $\mathbb{R}$ with compact support with } \rho(\mathbb{R})=1\}.
\end{align*}

Given $\rho \in \mathcal{P}_c(\mathbb{R})$, define the cumulative distribution $F_{\rho}: \mathbb{R} \mapsto [0,1]$:
\begin{align}
\label{eq:cumul-def}
    F_{\rho}(x) \coloneqq \rho((-\infty, x]),
\qquad x\in \mathbb{R},
\end{align}
and its associated pseudo-inverse $X_{\rho}: [0,1] \mapsto \mathbb{R}$ as
\begin{equation*}
  X_{\rho}(z) \coloneqq  \inf\{ x \in \mathbb{R} \quad | \quad F_{\rho}(x) \geq z\},\qquad z\in [0,1].
\end{equation*}
\end{definition}
Observe that $F_{\rho}$ is non-decreasing and right-continuous.

 We recall that the one dimensional Wasserstein distance can be defined using the cumulative or the pseudo-inverse functions, see e.g. \cite{villani2021topics}.
\begin{definition}\label{def:wassersteindistance}
The one-dimensional 1-Wasserstein distance is
\begin{align}\label{defofwass}
    W_1(\rho,\tilde{\rho}) \coloneqq \normone{F_{\rho} - F_{\tilde{\rho}}}{\mathbb{R}}= \normone{X_{\rho}-X_{\tilde{\rho}}}{[0,1]}. 
\end{align}
\end{definition}
Recall that the discrete density $\rho^{E,N}$
is a probability measure in $\mathcal{P}_c(\mathbb{R})$.
We can apply Definition \ref{defofFandX} to  $\rho^{E,N}$
and find that its cumulative distribution takes the form:
\begin{equation}
\label{Frho}
\begin{aligned}
    F_{\rho^{E,N}}(t,x) 
    &= 
    \int_{-\infty}^x
    \rho^{E,N}(t,y) dy
    \\
    &=\sum_{j=0}^{N-1} \left[jl + \rho_j^{N}(t) (x-x_j(t))\right] \chi_{[x_j(t),x_{j+1}(t))}(x) + \chi_{[x_N(t),+\infty)}(x).
\end{aligned}
\end{equation}
Notice that the cumulative
distribution $F_{\rho^{E,N}}$
is $1$-Lipschitz in the $x$-variable.
The corresponding pseudo-inverse takes the form:
\begin{align}\label{pseudoinverseeul}
    X_{\rho^{E,N}}(t,z)=\sum_{j=0}^{N-1}\left[x^N_j(t) + \frac{z-jl}{\rho_j^N(t)}\right]\chi_{[jl,(j+1)l)}(z) + \left[x^N_{N}(t)\right]\chi_{\{1\}}(z),\quad z\in[0,1].
\end{align}
The pseudo-inverse $X_{\rho^{E,N}}$ satisfies
\begin{equation*}
   \lrho(t,z) = \erho(t, \inverho_{\erho}(t,z)),  \quad  y^{L,N}(t,z) = y^{E,N}(t, \inverho_{\erho}(t,z))
   \qquad \forall~t\geq 0, \ z\in [0,1].
\end{equation*}
The cumulative function $F_{\rho^{E,N}}$ then satisfies
\begin{align}\label{pushfwd eul to lag}
   \lrho(t, F_{\erho}(t,x)) = \erho(t,x), \quad  y^{L,N}(t, F_{\erho}(t,x)) = y^{E,N}(t,x)
   \qquad \forall~t\geq 0, \ x\in \mathbb{R}.
\end{align}
Relying on~\eqref{pushfwd eul to lag}, one deduces that
\begin{equation*}
    \normone{\lrho(t)}{[0,1]}=\int_{\R}
     \erho(t,x)\,\frac{d}{dx} F_{\erho}(t,x) \,dx=\left\|\rho^{E,N}(t)\right\|^2_{L^2(\R)}\,.
\end{equation*}
Similarly, if we apply Definition \ref{defofFandX} to the (Dirac) empirical measure $\rho^{D,N}$, the cumulative distribution takes the form
\begin{align}\label{FDrho}
    F_{\rho^{D,N}}(t,x) = \sum_{j=0}^{N-1} \left[(j+1)l\right] \chi_{[x_j(t),x_{j+1}(t))}(x) + \chi_{[x_N(t),+\infty)}(x).
\end{align}
The corresponding pseudo-inverse takes the form:
\begin{align}\label{pseudoinversedir}
    X_{\rho^{D,N}}(t,z)=\sum_{j=0}^{N-1}\left[x^N_j(t) \right]\chi_{[jl,(j+1)l)}(z) + \left[x^N_N(t) \right]\chi_{\{1\}}(z).
    \end{align}

\begin{figure}
    \centering
\begin{tikzpicture}[scale=0.7]
\tikzmath{
\r1 =1;
\r2=0.65;
\r3=2;
\r4=0.35;
\x1=0;
\x2=\x1 + 1/4*\r1;
\x3=\x2 + 1/4*\r2;
\x4=\x3 + 1/4*\r3;
\x5=\x4 + 1/4*\r4;
 } 
\begin{axis}[
axis x line=middle,
axis y line=middle,
xtick={\x1,\x2,\x3,\x4,\x5},
xticklabels={0, $x_1^4(t)$, $x_2^4(t)$, $x_3^4(t)$, $x_4^4(t)$},
extra x ticks={0},                                     
extra x tick style={xticklabel=$x_0^4(t)$, style={anchor=north east}},
xlabel near ticks,
ytick={0, \r1, \r2, \r3, \r4},
yticklabels={0,$\rho^4_0(t)$, $\rho^4_1(t)$, $\rho^4_2(t)$, $\rho^4_3(t)$},
ylabel near ticks,
tick label style={font=\tiny} ,
xmax=1,
ymax=2,
xmin=0,
ymin=0
]
\addplot[domain=\x1:\x2] {\r1};
\addplot[domain=\x2:\x3] {\r2};
\addplot[domain=\x3:\x4] {\r3};
\addplot[domain=\x4:\x5] {\r4};
\draw[dotted] (axis cs:\x2,0) -- (axis cs:\x2, \r1);
\draw[dotted] (axis cs:\x3,0) -- (axis cs:\x3, \r3);
\draw[dotted] (axis cs:\x4,0) -- (axis cs:\x4, \r3);
\draw[dotted] (axis cs:\x5,0) -- (axis cs:\x5, \r4);
\addplot[only marks,mark=*] coordinates{(0,\r1)(\x2,\r2)(\x3,\r3)(\x4,\r4)};
\addplot[fill=white,only marks,mark=*] coordinates{(\x2,\r1)(\x3,\r2)(\x4,\r3)};
\end{axis}
\node[above] at (current bounding box.north) {$\rho^{E,N}(t,x)$};
\end{tikzpicture}
\begin{tikzpicture}[scale=0.7]
\tikzmath{
\r1 =1/1;
\r2=1/0.65;
\r3=1/2;
\r4=1/0.35;
 } 
\begin{axis}[
axis x line=middle,
axis y line=middle,
xtick={0,1/4,1/2,3/4,1},
xticklabels={0,$\frac{1}{4}$, $\frac{1}{2}$, $\frac{3}{4}$, $1$},
extra x ticks={0},                                     
extra x tick style={xticklabel=0, style={anchor=north east}},
xlabel near ticks,
ytick={0, \r1, \r2, \r3, \r4},
yticklabels={0,$y^4_0(t)$, $y^4_1(t)$, $y^4_2(t)$, $y^4_3(t)$},
ylabel near ticks,
tick label style={font=\tiny} ,
xmax=1,
ymax=1/0.34,
xmin=0,
ymin=0
]
\addplot[domain=0:1/4] {\r1};
\addplot[domain=1/4:1/2] {\r2};
\addplot[domain=1/2:3/4] {\r3};
\addplot[domain=3/4:1] {\r4};
\draw[dotted] (axis cs:1/4,0) -- (axis cs:1/4, \r2);
\draw[dotted] (axis cs:1/2,0) -- (axis cs:1/2, \r2);
\draw[dotted] (axis cs:3/4,0) -- (axis cs:3/4, \r4);
\draw[dotted] (axis cs:1,0) -- (axis cs:1, \r4);
\addplot[only marks,mark=*] coordinates{(0,\r1)(1/4,\r2)(1/2,\r3)(3/4,\r4)};
\addplot[fill=white,only marks,mark=*] coordinates{(1/4,\r1)(1/2,\r2)(3/4,\r3)};
\end{axis}
\node[above] at (current bounding box.north) {$y^{L,N}(t,z)$};
\end{tikzpicture}
\begin{tikzpicture}[scale=0.7]
\tikzmath{
\r1 =1;
\r2=0.65;
\r3=2;
\r4=0.35;
\x1=0;
\x2=\x1 + 1/4*\r1;
\x3=\x2 + 1/4*\r2;
\x4=\x3 + 1/4*\r3;
\x5=\x4 + 1/4*\r4;
 } 
\begin{axis}[
axis x line=middle,
axis y line=middle,
yticklabel=\empty,
xtick={\x1,\x2,\x3,\x4,\x5},
xticklabels={0, $x_1^4(t)$, $x_2^4(t)$, $x_3^4(t)$, $x_4^4(t)$x},
extra x ticks={0},                                     
extra x tick style={xticklabel=$x_0^4(t)$, style={anchor=north east}},
xlabel near ticks,
tick label style={font=\tiny} ,
xmax=1,
ymax=2,
xmin=0,
ymin=0
]
\draw (axis cs:\x2,0) -- (axis cs:\x2, 2);
\draw (axis cs:\x3,0) -- (axis cs:\x3, 2);
\draw (axis cs:\x4,0) -- (axis cs:\x4, 2);
\addplot[only marks,mark=triangle] coordinates{(0,2)(\x2,2)(\x3,2)(\x4,2)};
\end{axis}
\node[above,font=\bfseries] at (current bounding box.north) {$\rho^{D,N}(t,x)$};
\end{tikzpicture}
    \caption{The Eulerian discrete density, the inverse Lagrangian discrete density and the (Dirac) empirical measure profiles ($N=4$).}
    \label{fig:my_label}
\end{figure}
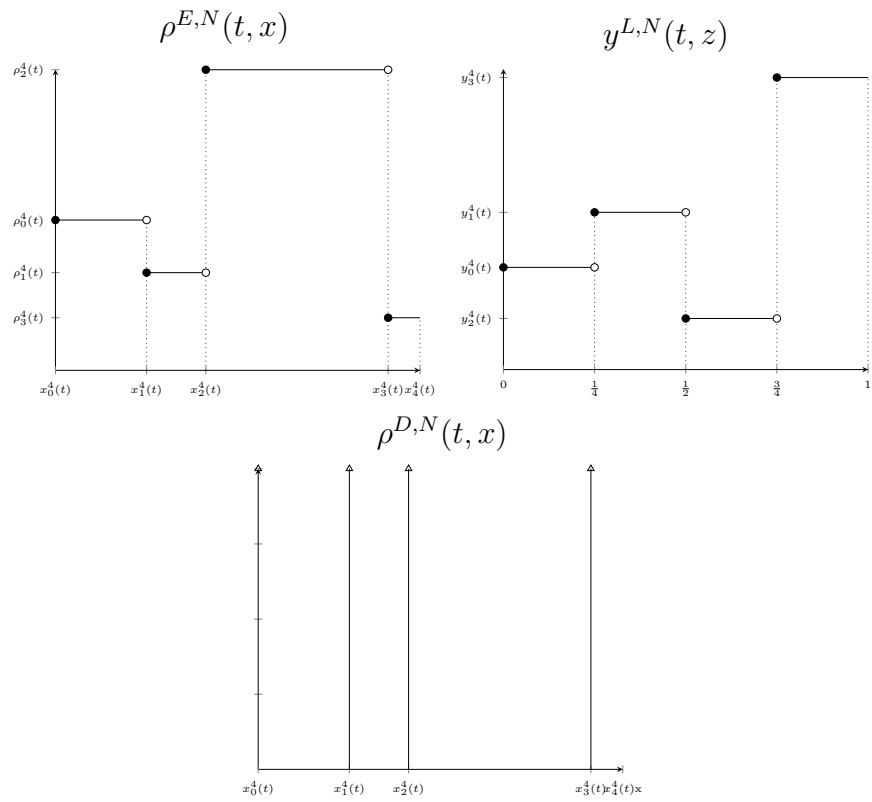

\subsection{Evolution of the supports} {In this short section, we provide a first, rough estimate about the support of all the functions defined above. The starting point is condition \eqref{cond:ubis}, that ensures the existence of a uniformly bounded initial support for the $x_i$.}


\begin{proposition} \label{p:ubisT}
    Let $\{x_j^N(t)\}_{j=0}^N$ satisfy the condition of uniformly bounded initial support \ref{cond:ubis} for some set $K\subset \R$. Let $v_{max}$ be given by \eqref{FtL-0N}. for each $T\geq 0$, define 
    \begin{eqnarray}
        \label{e-KT}
        K_T:=K+[0,T v_{max}]=\{x+z\mbox{ such that } x\in K, z\in [0,T v_{max}]\}.
    \end{eqnarray}
    It then holds
    \begin{itemize}
        \item[(i)] $x_i^N(t)\in K_T$ for all $N\in \N$, $i=0,
        \ldots, N$, $t\in[0,T]$;
        \item[(ii)] $\supp(\rho^{E,N}(t,\cdot)),\supp(y^{E,N}(t,\cdot)),\supp(\rho^{D,N}(t,\cdot))\subset  K_T$ for all $t\in[0,T]$;
        \item[(iii)] $F_{\rho^{E,N}}(t,x)=F_{\rho^{D,N}}(t,x)=0$ for all $x<\inf(K)=\inf(K_T)$ and
        
        $F_{\rho^{E,N}}(t,x)=F_{\rho^{D,N}}(t,x)=1$ for all $x>\max(K_T)=\max(K)+T v_{max}$.
    \end{itemize}
\end{proposition}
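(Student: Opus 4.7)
The proof is driven by a simple observation: the velocities $\dot x^N_i(t)$ are all non-negative and uniformly bounded above by $v_{\max}$. Indeed, by assumption (V1), $v$ is strictly decreasing on $[0,\rho_{\max}]$ with $v(\rho_{\max})=0$ and $v(0)=v_{\max}$, so $v(\rho)\in[0,v_{\max}]$ for $\rho\in[0,1]$. The discrete maximum principle~\eqref{eq:discr-maxprinc-density} guarantees $\rho^N_j(t)\in[0,1]$, hence $\dot x^N_j(t)=v(\rho^N_j(t))\in[0,v_{\max}]$ for $j=0,\dots,N-1$, while $\dot x^N_N(t)=v_{\max}$ by~\eqref{FtL-0N}. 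From here, the three claims follow from straightforward book-keeping using~\eqref{Frho} and~\eqref{FDrho}.

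To prove (i), I integrate the differential inequality $0\leq \dot x^N_i(t)\leq v_{\max}$ on $[0,T]$ to get $x^N_i(0)\leq x^N_i(t)\leq x^N_i(0)+T v_{\max}$ for all $t\in[0,T]$. Since $x^N_i(0)\in K$ by~\eqref{cond:ubis} and $K_T=K+[0,Tv_{\max}]$, this immediately yields $x^N_i(t)\in K_T$. For (ii), from Definitions~\ref{def:erho},~\ref{def:invdiscrdens} the supports of $\rho^{E,N}(t,\cdot)$ and $y^{E,N}(t,\cdot)$ are contained in $[x^N_0(t),x^N_N(t)]$, while $\supp(\rho^{D,N}(t,\cdot))=\{x^N_0(t),\dots,x^N_N(t)\}$; each of these sets is contained in $K_T$ by (i).

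For (iii), I start by noting that $\inf(K_T)=\inf(K)$ since $Tv_{\max}\geq 0$. Fix $t\in[0,T]$. If $x<\inf(K)$, then by (i), $x<x^N_j(t)$ for every $j=0,\dots,N$, so every indicator $\chi_{[x^N_j(t),x^N_{j+1}(t))}(x)$ and $\chi_{[x^N_N(t),+\infty)}(x)$ appearing in~\eqref{Frho} and~\eqref{FDrho} vanishes, giving $F_{\rho^{E,N}}(t,x)=F_{\rho^{D,N}}(t,x)=0$. If instead $x>\max(K_T)=\max(K)+Tv_{\max}$, then by (i), $x^N_j(t)\leq \max(K_T)<x$ for all $j$, so the indicator $\chi_{[x^N_N(t),+\infty)}(x)=1$ while all interval indicators $\chi_{[x^N_j(t),x^N_{j+1}(t))}(x)$ vanish, yielding $F_{\rho^{E,N}}(t,x)=F_{\rho^{D,N}}(t,x)=1$. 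There is essentially no obstacle here; the only thing to be careful about is reading off the correct sign conventions and the equality $\inf(K)=\inf(K_T)$.
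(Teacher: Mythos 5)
Your proof is correct and follows essentially the same route as the paper's (which simply notes that $\dot x_i^N\in[0,v_{\max}]$ by \eqref{FtL-0}--\eqref{FtL-0N} and that (ii)--(iii) are direct consequences of the definitions); you merely make explicit the integration of the velocity bound and the bookkeeping with \eqref{Frho} and \eqref{FDrho}. The only point worth a word in (ii) is that $\supp(\rho^{E,N}(t,\cdot))$ is the whole interval $[x_0^N(t),x_N^N(t))$ rather than just the grid points, so the containment "by (i)" requires $K_T$ to be convex; this is harmless because one may replace $K$ by $[\inf K,\sup K]$, which still satisfies \eqref{cond:ubis} --- a convention the paper adopts implicitly when it writes $\inf(K_T)$ and $\max(K_T)$.
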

\begin{proof} Statement (i) is a direct consequence of the fact that $\dot x_i\in [0,v_\max]$ in \eqref{FtL-0}-\eqref{FtL-0N}. Statements (ii)-(iii) are then direct consequences of the definitions.
    
\end{proof}

\subsection{Convergence results for the cumulative and pseudo-inverse functions} We now recall some results about the limits of $X_{\erho}$, $X_{\drho}$, $F_{\erho}$ and $F_{\drho}$, first given in \cite{di2015rigorous}. The proofs are valid for any initial data $\{x_j^N(0)\}_{j=0}^N$ of system \eqref{ftl} that satisfies the condition of uniformly bounded initial support \eqref{cond:ubis}.

\begin{proposition}\label{prop:convofXrho}
  Let $\{x_j^N(t)\}_{j=0}^N$ be a solution of \eqref{ftl}
  that satisfies the condition of uniformly bounded initial support \eqref{cond:ubis}. Consider the corresponding Eulerian discrete density \linebreak $\erho \in L^{\infty}([0,+\infty) \times \mathbb{R}; [0,1])$ defined by \eqref{def:erho} and the (Dirac) empirical measure \linebreak $\drho \in L^{\infty}([0, +\infty); W_1(\mathcal{P}_c(\mathbb{R})))$ defined by \eqref{def:drho}. Let
  $F_{\erho}$,  $X_{\erho}$, $F_{\drho}$, $X_{\drho}$,
  be the corresponding 
  cumulative distributions and pseudo-inverses 
  defined by \eqref{Frho}, \eqref{pseudoinverseeul},\eqref{FDrho}, \eqref{pseudoinversedir}, respectively. Then, the following hold:
  \begin{itemize}
      \item[(i)]
      there exists a non-decreasing 
      function 
      $X\in L^{\infty}([0,+\infty) \times [0,1])$ such that, up to a subsequence,
   both  $\{X_{\erho}\}_{N}$ and $\{X_{\drho}\}_N$ converge to~$X$ in $L^1_{loc}([0,+\infty) \times [0,1])$;
   \item[(ii)]
   define the map 
   $F: [0,+\infty)\times \mathbb{R} \mapsto [0,1]$ 
   as
   \begin{align}
   \label{eq:Fdef-3}
      F(t,x) \coloneqq \meas \{ z \in [0,1]: X(t,z) \leq x\},\qquad t\geq 0, \ x\in \mathbb{R},
  \end{align}
  {where $X(t,z)$ is given by statement (i).}
   Then, up to a subsequence, both  $\{F_{\erho}\}_N$ and  $\{F_{\drho}\}_N$ converge to $F$ in $L^1_{\loc}([0,+\infty) \times \mathbb{R})$. 
  \end{itemize}
\end{proposition}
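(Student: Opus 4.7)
The plan is to first observe that $\{X_{\erho}\}_N$ and $\{X_{\drho}\}_N$ are asymptotically very close in $L^1_{\loc}$: comparing the explicit formulas \eqref{pseudoinverseeul} and \eqref{pseudoinversedir}, $X_{\erho}(t,\cdot)$ is the piecewise linear interpolant on the grid $\{j/N\}$ of the step function $X_{\drho}(t,\cdot)$, and a direct calculation yields
\begin{align*}
    \int_0^1 |X_{\erho}(t,z) - X_{\drho}(t,z)|\, dz = \sum_{j=0}^{N-1} \frac{l^2}{2\rho_j^N(t)} = \frac{l}{2}\bigl(x_N^N(t)-x_0^N(t)\bigr).
\end{align*}
By Proposition \ref{p:ubisT} the right-hand side is bounded uniformly in $t\in[0,T]$ by $(2N)^{-1}\operatorname{diam}(K_T)$, and hence tends to zero, so the two sequences must share the same $L^1_{\loc}$-limit whenever one exists.

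To produce such a limit, I would rely on two a priori bounds on $X_{\drho}$. Proposition \ref{p:ubisT} gives the uniform $L^\infty$-bound $X_{\drho}(t,z)\in K_T$ for $t\in[0,T]$, while the bound $\dot{x}_j^N\in[0,v_{\max}]$ (from \eqref{FtL-0}--\eqref{FtL-0N} and \eqref{e-V1}) shows that $t\mapsto X_{\drho}(t,z)$ is $v_{\max}$-Lipschitz, uniformly in $z$ and $N$. At each rational $t\geq 0$, the map $z\mapsto X_{\drho}(t,z)$ is non-decreasing and uniformly bounded, so Helly's selection theorem yields a pointwise-a.e.\ convergent subsequence; a Cantor diagonal argument over a countable dense set of times, combined with the uniform time-Lipschitz bound, extracts a subsequence converging in $L^1_{\loc}([0,+\infty)\times[0,1])$ to a non-decreasing $X\in L^\infty([0,+\infty)\times[0,1])$. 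Thanks to the closeness estimate, $\{X_{\erho}\}_N$ converges to the same $X$ along that subsequence, proving (i).

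For (ii), I would use the identity $W_1(\mu,\nu)=\|F_\mu - F_\nu\|_{L^1(\mathbb{R})}=\|X_\mu - X_\nu\|_{L^1([0,1])}$ recalled in \eqref{defofwass}. Defining $\mu_t$ to be the pushforward of the Lebesgue measure on $[0,1]$ under $X(t,\cdot)$, its cumulative distribution coincides with the function $F$ of \eqref{eq:Fdef-3}. Since $\mu_t$, $\erho(t)$ and $\drho(t)$ all have support contained in the same compact set by Proposition \ref{p:ubisT}, the identity above gives
\begin{align*}
    \|F_{\erho}(t,\cdot) - F(t,\cdot)\|_{L^1(\mathbb{R})} = \|X_{\erho}(t,\cdot) - X(t,\cdot)\|_{L^1([0,1])},
\end{align*}
and analogously for $\drho$. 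Integrating in $t$ over $[0,T]$ and invoking statement (i) yields the claimed $L^1_{\loc}$ convergence of both $\{F_{\erho}\}_N$ and $\{F_{\drho}\}_N$ to $F$.

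The step I expect to require the most care is the compactness extraction: one has to verify carefully that convergence along a countable dense set of times, combined with the uniform $v_{\max}$-Lipschitz bound in $t$ and the uniform $L^\infty$ bound from the bounded-initial-support assumption, upgrades to full $L^1_{\loc}([0,+\infty)\times[0,1])$ convergence. Once this is in hand, the rest is a straightforward exploitation of the one-dimensional Wasserstein identity and of the explicit closeness between $X_{\erho}$ and $X_{\drho}$.
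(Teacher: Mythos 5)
Your argument is correct and follows essentially the same route as the paper, which here simply defers to \cite[Propositions 1--2, Lemma 4]{di2015rigorous}: the paper's own remark after the proposition confirms that the subsequential convergence is obtained exactly via Helly's compactness theorem, and the closeness computation $\int_0^1|X_{\erho}-X_{\drho}|\,dz=\tfrac{l}{2}(x_N^N(t)-x_0^N(t))$ that you use appears verbatim later in the paper. Your self-contained write-up (Helly plus diagonalization over a dense set of times, the uniform $v_{\max}$-Lipschitz bound in $t$ to upgrade to $L^1_{\loc}$ convergence, and the one-dimensional Wasserstein identity to transfer the convergence from the pseudo-inverses to the cumulative distributions) is a faithful reconstruction of that proof.
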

\begin{proof}
    See \cite[Propositions 1-2, Lemma 4]{di2015rigorous} with $L=1$, and $R=1$ (due to the maximum principle \eqref{eq:discr-maxprinc}), using their notation. 
\end{proof}
\begin{remark}
    Notice that, differently from the results in \cite{di2015rigorous}, Proposition~\ref{prop:convofXrho}
    here only states  the convergence of $\seq{X_{\erho}}$, $\seq{X_{\drho}}$ and $\seq{F_{\erho}}$, $\seq{F_{\drho}}$ up to a subsequence, which is obtained relying on Helly's compactness theorem. In \cite{di2015rigorous} the authors conclude that the whole sequences $\seq{X_{\erho}}$, $\seq{X_{\drho}}$ converge, exploiting the fact that their atomization scheme for the FtL model guarantees that $X_{\rho^{D,N+1}}(t,z) \leq X_{\rho^{D,N}}(t,z)$ for all $t\geq 0$ and $z \in [0,1]$. In turn, by the definition of the Wasserstein distance \eqref{defofwass}, 
    the convergence of the whole sequences $\seq{X_{\erho}}$ and  $\seq{X_{\drho}}$
    yields the convergence of $\seq{F_{\erho}}$ and $\seq{F_{\drho}}$.
\end{remark}


We now provide a refinement of Proposition~\ref{prop:convofXrho}-(ii). 

\begin{proposition}\label{prop:convinwassdepart} 
Consider two sequences $\{F_{\erho}\}_N$, $\{F_{\drho}\}_N$ of cumulative distributions associated to the Eulerian discrete density $\erho$,
and to the (Dirac) empirical measure $\drho$, respectively, that converge 
to a function $F$ defined by~\eqref{eq:Fdef-3},
which is Lipschitz continuous with respect to $x$.
For any $t \geq 0$, let $\rho(t)$ be the distributional derivative of $x \mapsto F(t,x)$.
Then the following hold:
\begin{itemize}
    \item[(i)] 
    $\rho(t) \in \mathcal{P}_c(\mathbb{R})$ for all $t \geq 0$,
    \item[(ii)] $0 \leq \rho(t) \leq 1$ for almost every $t\geq 0$ and $x \in \mathbb{R}$,
    \item[(iii)] 
    $\{\erho\}_N$ and $\{\drho\}_N$ converge to $\rho$ in 
    $L_{\loc}^1\left([0,+\infty); W_{1}(\mathcal{P}_c(\mathbb{R}))\right)$.
\end{itemize}
\end{proposition}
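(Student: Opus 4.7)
The plan is to first deduce the claimed regularity/mass properties of $\rho$ from the properties that the approximating cumulative distributions transfer to the limit $F$, and then to convert the already-established convergence of cumulatives into the announced $W_1$-type convergence by exploiting the identity $W_1(\mu,\nu)=\|F_\mu-F_\nu\|_{L^1(\mathbb{R})}$ from Definition~\ref{def:wassersteindistance}, together with the uniform bound on the support provided by Proposition~\ref{p:ubisT}.

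For statements (i)–(ii), I would argue as follows. By construction (see \eqref{Frho} and \eqref{FDrho}), for each fixed $t\geq 0$ and $N\in\mathbb{N}$ the maps $x\mapsto F_{\rho^{E,N}}(t,x)$ and $x\mapsto F_{\rho^{D,N}}(t,x)$ are non-decreasing and $1$-Lipschitz (the Lipschitz property being explicitly noted after \eqref{Frho}), taking values in $[0,1]$. The hypothesis that $F_{\rho^{E,N}}\to F$ (and $F_{\rho^{D,N}}\to F$) in $L^1_{\loc}$ together with monotonicity and uniform equicontinuity lets these properties pass to the limit: $F(t,\cdot)$ is non-decreasing and $1$-Lipschitz. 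Hence its distributional derivative $\rho(t)=\partial_x F(t,\cdot)$ is a non-negative function bounded above by $1$ almost everywhere, which proves (ii). For (i), Proposition~\ref{p:ubisT}(iii) gives $F_{\rho^{E,N}}(t,x)=0$ for $x<\inf K$ and $F_{\rho^{E,N}}(t,x)=1$ for $x>\max K+Tv_{\max}$, for all $t\in[0,T]$ and $N\in\mathbb{N}$. These equalities pass to the limit, so $F(t,\cdot)$ attains the value $0$ on the left and $1$ on the right, whence $\rho(t)$ is a probability measure with $\supp(\rho(t))\subset K_T$. Arbitrariness of $T$ yields $\rho(t)\in\mathcal{P}_c(\mathbb{R})$ for every $t\geq 0$.

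For statement (iii), fix $T>0$. By Definition~\ref{def:wassersteindistance} and the preceding step,
\begin{equation*}
W_1(\rho^{E,N}(t),\rho(t))=\|F_{\rho^{E,N}}(t,\cdot)-F(t,\cdot)\|_{L^1(\mathbb{R})},
\end{equation*}
and similarly for $\rho^{D,N}$. Because both $F_{\rho^{E,N}}(t,\cdot)$ and $F(t,\cdot)$ equal $0$ to the left of $K_T$ and equal $1$ to the right of $K_T$ for every $t\in[0,T]$, the integrand $|F_{\rho^{E,N}}(t,x)-F(t,x)|$ is supported in $K_T$, which is a fixed bounded set. Integrating over $[0,T]$,
\begin{equation*}
\int_0^T W_1(\rho^{E,N}(t),\rho(t))\,dt=\int_0^T\!\!\int_{K_T}|F_{\rho^{E,N}}(t,x)-F(t,x)|\,dx\,dt,
\end{equation*}
and the right-hand side tends to $0$ as $N\to\infty$ by the assumed $L^1_{\loc}([0,+\infty)\times\mathbb{R})$ convergence of $F_{\rho^{E,N}}$ to $F$ applied on the bounded set $[0,T]\times K_T$. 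The same argument applies verbatim with $\rho^{D,N}$ and $F_{\rho^{D,N}}$ in place of $\rho^{E,N}$ and $F_{\rho^{E,N}}$.

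The routine points are checking that the $1$-Lipschitz property and the boundary limits $0,1$ indeed pass to the limit $F$, which is straightforward from $L^1_{\loc}$ convergence combined with monotonicity (one can test against indicator functions and use Helly-type arguments already invoked in Proposition~\ref{prop:convofXrho}). The only delicate point I see is making sure the two integrations (in $t$ and $x$) can be swapped freely, which is guaranteed here because $|F_{\rho^{E,N}}-F|\leq 1$ is uniformly bounded on the fixed bounded slab $[0,T]\times K_T$, so Fubini applies without issue; this same boundedness also sidesteps any need for dominated convergence when extracting the limit.
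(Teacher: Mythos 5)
Your argument is sound in substance and follows the natural route (the one the paper itself does not spell out, since its ``proof'' is only a pointer to \cite[Proposition 3]{di2015rigorous}): transfer monotonicity, the $1$-Lipschitz bound and the boundary values $0,1$ from $F_{\erho}$ to $F$, read off (i)--(ii) from the distributional derivative, and then convert the $L^1_{\loc}$ convergence of the cumulatives into $W_1$ convergence via the one-dimensional identity of Definition~\ref{def:wassersteindistance}, using Proposition~\ref{p:ubisT} to confine everything to the fixed slab $[0,T]\times[\inf K_T,\sup K_T]$ (note: the difference of cumulatives is supported in the convex hull of $K_T$, not in $K_T$ itself, but that changes nothing). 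Step (iii) also tacitly uses $F(t,\cdot)=F_{\rho(t)}$, which is exactly Remark~\ref{rem:der-cd}; worth citing explicitly.

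The one step you should tighten is the phrase ``these properties pass to the limit'' in (i). The convergence $F_{\erho}\to F$ is only in $L^1_{\loc}([0,+\infty)\times\mathbb{R})$, so it yields a.e.\ convergence along a subsequence and hence the desired properties of $F(t,\cdot)$ only for \emph{almost every} $t$; that is enough for (ii) (which is stated a.e.\ in $t$) and for (iii) (an integral in $t$), but statement (i) is claimed for \emph{all} $t\geq 0$. The clean fix is already in the hypotheses: $F$ is the function \eqref{eq:Fdef-3}, i.e.\ $F(t,x)=\meas\{z\in[0,1]:X(t,z)\leq x\}$ is, for \emph{every} fixed $t$, the cumulative distribution of the push-forward of Lebesgue measure on $[0,1]$ by the non-decreasing map $X(t,\cdot)$ of Proposition~\ref{prop:convofXrho}(i); this is automatically a probability measure, and its support lies in $\overline{K_T}$ because $X(t,\cdot)$ takes values there. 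With that substitution for the proof of (i), the argument is complete.
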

\begin{proof}
    See \cite[Proposition 3]{di2015rigorous} with $L=1$, and $R=1$ (due to the maximum principle \eqref{eq:discr-maxprinc}), using their notation. 
\end{proof}
\begin{remark}
\label{rem:der-cd}
Given a map $F: [0,+\infty)\times \mathbb{R} \mapsto [0,1]$,
denote with $\rho(t)$  the distributional derivative of $x \mapsto F(t,x)$, 
and assume that $\rho(t) \in \mathcal{P}_c(\mathbb{R})$ for all $t\geq 0$.
Then, if we consider the cumulative distribution
$F_{\rho(t)}$ as defined in~\eqref{eq:cumul-def}, one has
\begin{equation*}
    F_{\rho(t)}(x)=F(t,x)
    \quad \text{for \ a.e.} \ \ x\in \mathbb{R}.
\end{equation*}
\end{remark}

\begin{lemma}\label{lemmaexistencerhoL}
Let $\{x_j^N(t)\}_{j=0}^N$ be a solution of \eqref{ftl},
and consider the Lagrangian discrete density $\lrho \in L^{\infty}([0,+\infty) \times [0,1])$ defined by \eqref{def:lrho}. Then, there exists $\rho^L \in L^{\infty}([0,T] \times [0,1])$  such that, up to a subsequence, $\seq{\rho^{L,N}}$  converges to $\rho^L$ weakly-* in $L^{\infty}([0,+\infty)\times[0,1])$.
\end{lemma}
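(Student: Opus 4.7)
The plan is to deduce the result as an immediate consequence of the Banach-Alaoglu theorem, using the discrete maximum principle to secure a uniform $L^\infty$ bound on the sequence $\{\rho^{L,N}\}_N$.

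First I would observe that, by the very definition \eqref{def:lrho}, the Lagrangian discrete density is a piecewise-constant-in-$z$ function whose values on the intervals $[jl,(j+1)l)$ are exactly the discrete densities $\rho^N_j(t)$. Thus
\begin{equation*}
   \norminfty{\rho^{L,N}}{[0,+\infty)\times [0,1]} \;=\; \sup_{t\geq 0,\; j=0,\dots,N-1} \rho^N_j(t).
\end{equation*}
The discrete maximum principle \eqref{eq:discr-maxprinc-density}, which applies to any solution of \eqref{ftl} (independently of the initial atomization scheme, since it only uses the non-overlapping condition \eqref{eq:minimun-initial-distance}), yields $\rho^N_j(t) \leq \max_{j} \bar{\rho}^N_j \leq 1$ for all $t\geq 0$, all $j$ and all $N$. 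Hence the sequence $\{\rho^{L,N}\}_N$ is uniformly bounded in $L^\infty([0,+\infty)\times [0,1])$ by $1$.

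Next I would invoke the Banach-Alaoglu theorem. Since $L^1([0,+\infty)\times [0,1])$ is a separable Banach space, the closed unit ball of its dual $L^\infty([0,+\infty)\times [0,1])$ is sequentially weak-$*$ compact. Applied to $\{\rho^{L,N}\}_N$, which lies in this ball, this produces a subsequence $\{\rho^{L,N_k}\}_k$ and some function $\rho^L \in L^\infty([0,+\infty)\times [0,1])$, with $\|\rho^L\|_{L^\infty}\leq 1$, such that $\rho^{L,N_k} \rightharpoonup \rho^L$ weakly-$*$ in $L^\infty([0,+\infty)\times [0,1])$, which is the desired conclusion.

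No serious obstacle arises: the only point requiring care is the uniform $L^\infty$ bound, which is handed to us by \eqref{eq:discr-maxprinc-density}; once that is in place, the extraction of the weakly-$*$ convergent subsequence is automatic from the sequential Banach-Alaoglu theorem.
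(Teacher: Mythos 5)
Your proof is correct, and it is essentially the same argument the paper relies on: the paper simply cites \cite[Lemma 5]{di2015rigorous} (with $R=1$ thanks to the discrete maximum principle), and the underlying content is precisely the uniform $L^\infty$ bound from \eqref{eq:discr-maxprinc-density} followed by sequential weak-$*$ compactness of the unit ball of $L^\infty$ as the dual of the separable space $L^1$. Nothing further is needed.
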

\begin{proof}
    See \cite[Lemma 5]{di2015rigorous} with $L=1$,  and $R=1$ (due to the maximum principle \eqref{eq:discr-maxprinc}), using their notation.
\end{proof}

\section{Proof of Theorem \ref{thm:micromacrointro} 
}\label{sectionmicromacro}

In this section we prove the first
main result of this article, i.e. Theorem \ref{thm:micromacrointro}. 
%
With this goal, we first recall standard tools to study the Cauchy problem \eqref{lwr}: the definition of weak solution and classical results of existence and uniqueness of entropy solutions. Then, after proving a technical lemma, we present the proof of Theorem \ref{thm:micromacrointro}.
\hfill

Given the Cauchy problem \eqref{lwr}, we recall the definition of weak 
and entropy weak solution. 
\begin{definition}\label{def:weaksolution}
    A function $\rho \in L^{\infty}([0,+\infty) \times \mathbb{R})$ is a weak solution to \eqref{lwr} if it holds
    \begin{equation*}
\int_{\mathbb{R}} \int_{\mathbb{R}_{+}}\left[\rho(t, x) \varphi_t(t, x)+(\rho(t, x)v(\rho(t, x))) \varphi_x(t, x)\right] \mathrm{d} t \mathrm{~d} x+\int_{\mathbb{R}} \bar{\rho}(x) \varphi(0, x) \mathrm{d} x=0
\end{equation*}
for all $\varphi \in C_c^{\infty}([0,+\infty)\times \mathbb{R})$.
\end{definition}

\begin{definition}\label{def:weakentrsolution}
    A function $\rho \in L^{\infty}([0,+\infty) \times \mathbb{R})$ is a Kružkov's entropy solution to \eqref{lwr} if 
     it satisfies the entropy inequality
     \begin{equation}
         \label{eq:def-ews}
\begin{aligned}
\int_{\mathbb{R}} \int_{\mathbb{R}_{+}}[|\rho(t, x)-k| \varphi_t(t, x)+\sign(\rho(t, x)-k)&[f(\rho(t, x))-f(k)] \varphi_x(t, x)] dtdx \\
&+\int_{\mathbb{R}}|\bar{\rho}(x)-k| \varphi(0, x) dx \geq 0
\end{aligned}
     \end{equation}
for all $\varphi \in C_c^{\infty}([0,+\infty)\times \mathbb{R})$ with $\varphi$ non-negative, and for all constants $k \in \mathbb{R}$.
\end{definition}


We now present two well-known results about the existence and uniqueness of the weak entropy solution to the Cauchy problem \eqref{lwr}.

\begin{theorem}[Uniqueness of Kružkov's solution, \cite{kruvzkov1970first}]\label{kruzkovsol}
Assume that the flux $f(\rho)$ is locally Lipschitz. For any given initial data $\bar{\rho} \in L^{\infty}$ with compact support, there exists a unique Kružkov's entropy solution $\rho \in L^{\infty}([0, +\infty) \times \mathbb{R})$ to \eqref{lwr}.

\end{theorem}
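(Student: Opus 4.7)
The plan is to prove Kružkov's classical theorem by separating the uniqueness (which in fact yields the stronger $L^1$-stability) from the existence; since both parts are standard I would only sketch the arguments and highlight where the finite-speed-of-propagation structure of \eqref{lwr} is used.

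For uniqueness I would apply Kružkov's doubling-of-variables technique. Given two entropy solutions $\rho(t,x)$ and $\tilde{\rho}(s,y)$, viewed as functions on independent copies of the space-time half-plane, I would use the entropy inequality \eqref{eq:def-ews} for $\rho$ in $(t,x)$ with the constant $k$ frozen at $\tilde{\rho}(s,y)$, and symmetrically for $\tilde{\rho}$ in $(s,y)$ with $k=\rho(t,x)$, then integrate the sum against
\begin{equation*}
\phi(t,x,s,y) \,=\, \Psi\!\left(\tfrac{t+s}{2},\tfrac{x+y}{2}\right)\eta_\epsilon(t-s)\,\eta_\epsilon(x-y),
\end{equation*}
for a nonnegative $\Psi\in C_c^\infty$ and a standard mollifier $\eta_\epsilon$. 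Since $\eta_\epsilon$ depends only on differences, the derivatives $(\partial_t+\partial_s)\phi$ and $(\partial_x+\partial_y)\phi$ that fall on $\eta_\epsilon$ cancel; sending $\epsilon\to 0$ then yields the Kato-type inequality
\begin{equation*}
\int_0^{+\infty}\!\!\int_{\R}\!\Bigl[|\rho-\tilde{\rho}|\,\Psi_t+\sign(\rho-\tilde{\rho})\bigl(f(\rho)-f(\tilde{\rho})\bigr)\Psi_x\Bigr]dx\,dt+\int_{\R}|\bar{\rho}-\bar{\tilde{\rho}}|\,\Psi(0,x)\,dx\,\geq\, 0.
\end{equation*}
Specializing $\Psi$ to a smooth approximation of the characteristic function of a truncated backward cone with slope $\Lip(f)$, and exploiting the compact support of the solutions to handle the unbounded tail, I would obtain the $L^1$-contraction
\begin{equation*}
\normone{\rho(t,\cdot)-\tilde{\rho}(t,\cdot)}{\R} \,\leq\, \normone{\bar{\rho}-\bar{\tilde{\rho}}}{\R},
\end{equation*}
which gives uniqueness as soon as $\bar{\rho}=\bar{\tilde{\rho}}$.

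For existence I would use vanishing viscosity: introduce $\rho^\epsilon_t+f(\rho^\epsilon)_x=\epsilon\,\rho^\epsilon_{xx}$ with initial datum a smooth compactly supported mollification $\bar{\rho}^\epsilon$ of $\bar{\rho}$. Classical parabolic theory gives a smooth global solution, the maximum principle gives $\|\rho^\epsilon\|_{L^\infty}\leq\|\bar{\rho}\|_{L^\infty}$, and (first assuming $\bar{\rho}\in BV$) differentiating the equation yields a uniform bound on $\totvar(\rho^\epsilon(t,\cdot))$. Helly's theorem then produces $L^1_{\loc}$ convergence along a subsequence to some $\rho\in L^\infty$ with compact space-time support growing linearly in $t$. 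Multiplying the viscous equation by $\eta'_\delta(\rho^\epsilon-k)\,\varphi$ for $\eta_\delta$ a smooth convex approximation of $|\cdot|$, integrating by parts, and discarding the nonpositive dissipation $-\epsilon\!\int\!\eta''_\delta(\rho^\epsilon-k)(\rho^\epsilon_x)^2\varphi$ yields an inequality stable under $\epsilon\to 0$ and then $\delta\to 0$, producing \eqref{eq:def-ews} for $\rho$. For merely $L^\infty$ initial data one approximates $\bar{\rho}$ by $BV$ data and closes the argument using the $L^1$-contraction from the uniqueness step.

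The hard part will be the doubling-of-variables limit. The delicate issues are: the inequality \eqref{eq:def-ews} is only one-sided, so the two copies must be summed (not subtracted) and the combination exploited through the pointwise structure of Kružkov entropies; the $\epsilon\to 0$ passage requires careful handling of the bounded but discontinuous integrand $\sign(\rho-\tilde{\rho})(f(\rho)-f(\tilde{\rho}))$; and the backward-cone test function must be smoothed in a way compatible with finite speed of propagation, which is where the compactness of $\supp(\bar{\rho})$ enters crucially. Once the Kato inequality and the $L^1$-contraction are established, the existence half is comparatively routine, with the identification of the viscous limit as an entropy solution through convex approximations of $|\cdot|$ being the only remaining technical point.
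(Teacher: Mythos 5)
Your sketch is correct: it reproduces the classical Kružkov argument (doubling of variables with the entropy flux $\sign(u-v)(f(u)-f(v))$ and a backward-cone test function for the $L^1$-contraction, vanishing viscosity with BV/Helly compactness for existence), which is precisely the proof in the reference the paper cites --- the paper itself states this theorem without proof. The only points worth tightening in a full write-up are the recovery of the initial-data term in the Kato inequality from the formulation \eqref{eq:def-ews} (one first shows the entropy inequality forces $\operatorname{ess\,lim}_{t\to 0^+}\|\rho(t)-\bar\rho\|_{L^1_{\loc}}=0$) and the fact that for a merely locally Lipschitz flux the viscous approximation should also mollify $f$; both are standard.
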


\begin{theorem}[Chen and Rascle's entropy solution, \cite{chen2000initial}]\label{chenrascle}
Assume that the flux is genuinely nonlinear almost everywhere, i.e. there exists no nontrivial interval on which the flux $f(\rho)$ is affine. For a given initial data $\bar{\rho} \in L^{\infty}$ with compact support, there exists a unique weak solution $\rho \in L^{\infty}([0, + \infty) \times \mathbb{R})$ of \eqref{lwr} in the sense of Definition \ref{def:weaksolution} that satisfies the entropy inequality
\begin{align}\label{inentropychenrascle}
\int_{\mathbb{R}} \int_{\mathbb{R}_{+}}\left[|\rho(t, x)-k| \varphi_t(t, x)+\sign(\rho(t, x)-k)[f(\rho(t, x))-f(k)] \varphi_x(t, x)\right] dt dx \geq 0
\end{align}
for all $\varphi \in C_c^{\infty}((0,+\infty)\times \mathbb{R})$ with $\varphi$ non-negative and for all constants $k \in \mathbb{R}$.
Moreover, $\rho$ is the unique Kružkov's entropy solution  to \eqref{lwr}
\end{theorem}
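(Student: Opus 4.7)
The plan is to establish the theorem in three stages: exhibit a weak solution that verifies the interior entropy inequality~\eqref{inentropychenrascle}, identify its initial trace, and deduce coincidence with the Kružkov entropy solution provided by Theorem~\ref{kruzkovsol}.

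\emph{Existence} is essentially inherited from Theorem~\ref{kruzkovsol}. Given $\bar{\rho}\in L^\infty$ with compact support, the flux $f(\rho)=\rho\,v(\rho)$ is locally Lipschitz by \eqref{e-V1}, so there exists a unique Kružkov entropy solution $\rho\in L^\infty([0,+\infty)\times\mathbb{R})$. Any test function $\varphi\in C_c^\infty((0,+\infty)\times\mathbb{R})$ vanishes on $\{t=0\}$, so the boundary term in~\eqref{eq:def-ews} drops and $\rho$ automatically satisfies~\eqref{inentropychenrascle}. In particular, using $\varphi$ with compact support disjoint from $\{t=0\}$ and successively the entropies $\pm(\rho-k)$ shows $\rho$ is a weak solution in the sense of Definition~\ref{def:weaksolution}.

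For the \emph{uniqueness/identification} part, the heart of the matter is to show that any weak solution $\tilde\rho\in L^\infty([0,+\infty)\times\mathbb{R})$ satisfying~\eqref{inentropychenrascle} attains $\bar{\rho}$ strongly in $L^1_{\loc}$ as $t\to 0^+$. Here the hypothesis that $f$ is affine on no nontrivial interval is crucial: it is the non-degeneracy condition that unlocks strong-trace theorems for scalar conservation laws (Panov, Vasseur). Under this assumption, \eqref{inentropychenrascle} forces the existence of $\rho_0\in L^\infty(\mathbb{R})$ with $\tilde\rho(t,\cdot)\to\rho_0$ in $L^1_{\loc}(\mathbb{R})$ as $t\to 0^+$. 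Plugging test functions of product form $\psi_\epsilon(t)\eta(x)$ into the weak formulation of Definition~\ref{def:weaksolution}, with $\psi_\epsilon$ approximating a Dirac mass at $t=0$ and arbitrary $\eta\in C_c^\infty(\mathbb{R})$, then identifies $\rho_0=\bar{\rho}$ a.e. With this strong initial trace at hand, I would upgrade~\eqref{inentropychenrascle} to~\eqref{eq:def-ews} as follows: for any non-negative $\varphi\in C_c^\infty([0,+\infty)\times\mathbb{R})$ and $\epsilon>0$, let $\chi_\epsilon(t)$ be a smooth cutoff vanishing on $[0,\epsilon/2]$ and equal to $1$ on $[\epsilon,+\infty)$; test~\eqref{inentropychenrascle} against $\varphi(t,x)\chi_\epsilon(t)$, and let $\epsilon\to 0$. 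The contribution of $\chi_\epsilon'(t)$ combined with the strong $L^1_{\loc}$ convergence $\tilde\rho(t,\cdot)\to\bar{\rho}$ produces exactly the initial-datum term $\int_\mathbb{R}|\bar{\rho}(x)-k|\varphi(0,x)\,dx$ present in~\eqref{eq:def-ews}. Hence $\tilde\rho$ is a Kružkov entropy solution, and Theorem~\ref{kruzkovsol} forces $\tilde\rho=\rho$.

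The \emph{main obstacle} is precisely this strong-trace step. For merely $L^\infty$ solutions of a scalar conservation law the existence of a strong $L^1_{\loc}$ trace at $t=0$ is nontrivial: without genuine nonlinearity one only has a weak trace, which is not sufficient to recover the boundary contribution in~\eqref{eq:def-ews}, and therefore uniqueness would fail. A fully self-contained proof would either invoke Panov's strong-trace theorem via the kinetic formulation of scalar conservation laws, or carry out Vasseur's blow-up analysis; either route constitutes the technical core, and indeed is the content of the original argument of Chen and Rascle in~\cite{chen2000initial}. An alternative bypass route would be to quote DiPerna's uniqueness for entropy measure-valued solutions (again using genuine nonlinearity) and match the initial Dirac, but the trace question reappears in disguise when relating the measure-valued solution to $\tilde\rho$ at $t=0$.
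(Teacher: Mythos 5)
Your proposal is correct and takes essentially the same route as the paper: the paper states this result as a citation to Chen and Rascle, and the remark immediately following the theorem identifies precisely the mechanism you build on --- genuine nonlinearity of the flux guarantees a strong $L^1_{\loc}$ trace of the solution at $t=0$, which lets one recover the initial-datum term in~\eqref{eq:def-ews} from the interior inequality~\eqref{inentropychenrascle} and then conclude by Kružkov uniqueness. You correctly flag that the strong-trace step (Vasseur/Panov) is the technical core that must be imported from the literature, which is exactly what the paper does by citing the original reference rather than reproving it.
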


In Theorem \ref{chenrascle} we see that, if the flux is genuinely nonlinear almost everywhere, uniqueness of entropy solution is preserved for a relaxed notion of entropy solution, which does not require the entropy inequality~\eqref{eq:def-ews}
to be satisfied at $t=0$.
This is due to the fact that 
the nonlinearity of the flux ensures 
the existence of a strong trace at $t=0$ of a
weak solution to \eqref{lwr} in the sense of Definition \ref{def:weaksolution}.

We now present the following lemma, which is used in the proof of Theorem \ref{thm:micromacrointro}.
\begin{lemma}\label{convl1impliesconvlinf}
Consider a function $f \in L^1(\mathbb{R})\cap L^{\infty}(\mathbb{R})$ which is $1-$Lipschitz. It holds
\begin{align*}
    \norminfty{f}{\mathbb{R}} \leq \sqrt{\normone{f}{\mathbb{R}}}.
\end{align*}

\end{lemma}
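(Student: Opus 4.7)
The plan is to exploit the interplay between the pointwise size of $f$ and its $L^1$ mass, using the Lipschitz constraint to turn a peak value into a mandatory triangular lower envelope. Set $M := \|f\|_{L^{\infty}(\mathbb{R})}$. If $M = 0$ the inequality is trivial, so assume $M > 0$. Fix an arbitrary $\varepsilon \in (0, M)$ and choose $x_0 \in \mathbb{R}$ with $|f(x_0)| > M - \varepsilon$. Up to replacing $f$ by $-f$ (which preserves both norms and the Lipschitz constant), we may assume $f(x_0) > M - \varepsilon > 0$.

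Next, I would use $1$-Lipschitzianity to propagate this lower bound on an interval around $x_0$. Indeed, for all $x \in \mathbb{R}$,
\begin{equation*}
    f(x) \geq f(x_0) - |x - x_0| > (M - \varepsilon) - |x - x_0|.
\end{equation*}
In particular, on $I_\varepsilon := [x_0 - (M-\varepsilon),\, x_0 + (M-\varepsilon)]$ the right-hand side is non-negative, so $|f(x)| = f(x) \geq (M-\varepsilon) - |x-x_0|$ there.

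I would then integrate this pointwise lower bound over $I_\varepsilon$ to obtain a lower bound on $\|f\|_{L^1}$:
\begin{equation*}
    \|f\|_{L^1(\mathbb{R})} \geq \int_{I_\varepsilon} |f(x)|\, dx \geq \int_{-(M-\varepsilon)}^{M-\varepsilon} \bigl((M-\varepsilon) - |t|\bigr)\, dt = (M-\varepsilon)^2,
\end{equation*}
the last equality being the elementary area of an isoceles triangle of base $2(M-\varepsilon)$ and height $M-\varepsilon$. Letting $\varepsilon \to 0^+$ gives $\|f\|_{L^1(\mathbb{R})} \geq M^2 = \|f\|_{L^\infty(\mathbb{R})}^2$, from which the claim follows by taking square roots.

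There is no real obstacle here; the only mild care concerns the choice of $x_0$. Since $f$ is continuous (being Lipschitz), the essential supremum coincides with the supremum, and either it is attained or one approximates it to within $\varepsilon$ by a genuine value $f(x_0)$, which is exactly what the argument uses. The sign reduction via $f \mapsto -f$ handles the case in which the extremum of $|f|$ is realised by a large negative value.
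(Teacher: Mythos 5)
Your proof is correct and follows essentially the same route as the paper's: both integrate the triangular lower envelope forced by the Lipschitz bound beneath a near-maximal value of $|f|$ and then pass to the limit (the paper via a maximizing sequence for $|f|$, you via an $\varepsilon$-approximation and a sign reduction). The differences are purely cosmetic.
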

\begin{proof} 

Since $|f|$ is 1-Lipschitz, for every $\bar x \in \mathbb{R}$ it holds
\begin{align*}
    |f(x)| \geq \max\{|f(\bar x)| - |x - \Bar{x}|,0\} \quad \forall \,x \in \mathbb{R}.
\end{align*}
By integrating in space, it holds
\begin{align*}
   \normone{f}{\mathbb{R}} = \int_{\mathbb{R}}|f(x)|dx \geq \int_{\mathbb{R}} \max\{|f(\bar x)| - |x - \Bar{x}|,0\} dx = |f(\bar x)|^2.
\end{align*}
Take now $\bar x_n$ such that $\lim_{n\to+\infty} |f(\bar x_n)|=\|f\|_{L^\infty(\R)}$. By passing to the limit, we have $$\normone{f}{\mathbb{R}}\geq \lim_{n\to+\infty} |f(\bar x_n)|^2=\|f\|^2_{L^\infty(\R)}.$$
\end{proof}

\medskip

We are now ready to provide the proof of the first main result of this article.

\begin{proof}[Proof of Theorem \ref{thm:micromacrointro}]
Consider the Eulerian discrete density $\erho \in L^{\infty}\big([0,+\infty) \times \mathbb{R}; [0,1]\big)$ defined by \eqref{constructionofrhoN}. To ease notation, we set
\begin{align}
\label{eq:cumdistr-short}
    {F}^N(t) \coloneqq F_{\erho(t)}, 
\end{align}
from now on, where $F_{\erho}$ denotes the
cumulative distribution of $\erho$
given by~\eqref{Frho}. Let $F(t,x)$ be the function defined 
by~\eqref{eq:Fdef-3}, 
which is equal to the 
cumulative distribution $F_{{\rho}(t)}(x)$
of its $x$-distributional derivative $\rho(t)$ (see Remark~\ref{rem:der-cd}).

\smallskip
The proof is based on two steps. 

\smallskip
{\bf 1.}
In this step we prove that  $\seq{\erho}$, up to a subsequence, is a
Cauchy sequence in $L_{\loc}^1([              0,+\infty)\times \mathbb{R})$, under either assumption (H1) or (H2) in
Theorem \ref{thm:micromacrointro}. Thus $\seq{\erho}$ converges in $L_{\loc}^1([              0,+\infty)\times \mathbb{R})$ to some limit function $\rho\in L_{\loc}^1([              0,+\infty)\times \mathbb{R})$.

    Recall by Propositions~\ref{prop:convofXrho}-\ref{prop:convinwassdepart} that, up to a subsequence, and for every $T > 0$ it holds 
    \begin{align}\label{convwassrho}
        \lim_{N \rightarrow +\infty} \int_0^T W_1(\erho(t),\rho(t)) dt =  \lim_{N \rightarrow +\infty} \int_0^T\normone{{F}^N(t) - F(t)}{\mathbb{R}}dt = 0.
    \end{align}
    Since $F^N$, $F^M$ are 
    monotone non-decreasing and 
    $1-$Lipschitz in the $x$ variable, then also the function $F^N - F^M$ is $1-$Lipschitz in the $x$ variable. Therefore, by Lemma \ref{convl1impliesconvlinf} it holds
    \begin{align}\label{estimatel1linfrho}
    \quad \norminfty{F^N(t) - F^M(t)}{\mathbb{R}} \leq \sqrt{\normone{F^N(t) - F^M(t)}{\mathbb{R}}}  \quad \forall \,N,M \in \mathbb{N},\quad \forall~t>0.
    \end{align}
    It moreover holds $\supp(F^N(t) - F^M(t))\subset K_T$,
    $\supp(\erho(t) - \erhoM(t)\subset K_T$,
    for all $t\in[0,T]$, as a consequence of Proposition \ref{p:ubisT}.    
    Integrating by parts, we find
    \begin{eqnarray*}
   &&\int_{\mathbb{R}}(\erho(t,x) - \erhoM(t,x) )^2dx = \int_{\mathbb{R}}\frac{d}{dx}\left(F^N(t,x) - F^M(t,x) \right)\left(\erho(t,x) - \erhoM(t,x)\right)dx\\
    &&=-\int_{\mathbb{R}}\left(F^N(t,x) - F^M(t,x)\right)\frac{d}{dx}\left(\erho(t,x) - \erhoM(t,x)\right)dx\\
    &&\leq \norminfty{F^N(t) - F^M(t)}{\mathbb{R}} \totvar \left(\erho(t) - \erhoM(t); \mathbb{R}\right)\\
    &&\leq \norminfty{F^N(t) - F^M(t)}{\mathbb{R}}\left[\totvar \left(\erho(t);\mathbb{R}\right) + \totvar \left(\erhoM(t); \mathbb{R}\right)\right].
\end{eqnarray*}
 By H\"older inequality
 and by using \eqref{estimatel1linfrho}, we thus get that 
 for all $N,M \in \mathbb{N}$, and for all $t>0$, 
 it holds
\begin{equation}\label{inmaininomega}
    \begin{aligned}
    &\normone{\erho(t) - \erhoM(t)}{\mathbb{R}}^2 \leq \meas(K_T)\normtwo{\erho(t) - \erhoM(t)}{\mathbb{R}}^2 \\
    &\leq \meas(K_T)\norminfty{F^N(t) - F^M(t)}{\mathbb{R}}\left[\totvar \left(\erho(t); \mathbb{R}\right) + \totvar \left(\erhoM(t); \mathbb{R}\right)\right]\\
    &\leq \meas(K_T)\sqrt{\normone{F^N(t) - F^M(t)}{\mathbb{R}}}\left[\totvar \left(\erho(t); \mathbb{R}\right) + \totvar \left(\erhoM(t); \mathbb{R}\right)\right].
    \end{aligned}
\end{equation}
    The further treatment of this inequality is now addressed by
considering separately the two cases 
of assumption (H1) and (H2) in Theorem~\ref{thm:micromacrointro}.
\smallskip

{\bf Case (H1).} \label{proof:case11} We assume that \eqref{hyp: initial discrete TV bounded} holds. Because of the BV contractivity property enjoyed by $\erho$ and $\erhoM$ (see Proposition \ref{prop: contract Tot Var}-(i)) and relying on the hypothesis on the total variation of $\erho(0)$ and $\erhoM(0)$, it holds
    \begin{align*}
         \qquad \totvar\left(\erho(t); \mathbb{R}\right) + \totvar\left(\erhoM(t); \mathbb{R}\right) \leq  \totvar\left(\erho(0); \mathbb{R}\right) + \totvar\left(\erhoM(0); \mathbb{R}\right) \leq 2 C.
    \end{align*}
    Thus, we deduce from~\eqref{inmaininomega}
    that, for all $N,M \in \mathbb{N}$, and for all $t>0$, 
    it holds
    \begin{align}\label{ineqforconvratelater}
    &\normone{\erho(t) - \erhoM(t)}
    {\mathbb{R}}^2
    \leq 2 C\meas(K_T)
    \sqrt{\normone{{F}^N(t) - F^M(t)}{\mathbb{R}}}.
    \end{align}
     Notice that, by Hölder's inequality,
     we have
    \begin{equation}
    \label{eq:Fineq}
    \qquad\qquad \begin{aligned}
        \int_0^T \sqrt{\normone{{F}^N(t) - F^M(t)}{\mathbb{R}}}dt &\leq \normtwo{1}{[0,T]} \normtwo{\sqrt{\normone{{F}^N(t) - F^M(t)}{\mathbb{R}}}}{[0,T]}\\
    &= \sqrt{T} \sqrt{\int_0^T\normone{{F}^N(t) - F^M(t)}{\mathbb{R}}dt}.
    \end{aligned}
    \end{equation}
    Then, integrating~\eqref{ineqforconvratelater} in the time interval $[0, T]$, and using~\eqref{eq:Fineq},  we find that for all $N,M \in \mathbb{N}$ it holds
\begin{equation}
\label{eq:L1boundeulerdens-1}
\int_0^T\normone{\erho(t) - \erhoM(t)}{\mathbb{R}}^2
    dt \leq 2 C\meas(K_T)
    \sqrt{T}\sqrt{\int_0^T\normone{{F}^N(t) - F^M(t)}{\mathbb{R}}dt}.
\end{equation}
Finally, by Hölder's inequality,
     we derive from~\eqref{eq:L1boundeulerdens-1} that
     \begin{eqnarray*}
%
    &&\int_0^T\normone{\erho(t) - \erhoM(t)}{\mathbb{R}}
    dt \leq 
    \leq \sqrt{2 C\meas(K_T)}\cdot 
    T^{\frac{3}{4}}\bigg(\int_0^T\normone{{F}^N(t) - F^M(t)}{\mathbb{R}}dt\bigg)^{\frac{1}{4}}.
\end{eqnarray*}
Therefore,  in Case (H1)
the convergence result \eqref{convwassrho} implies that,
for every $T>0$, 
the sequence
$\seq{\erho}$ is a Cauchy sequence in 
$L^1([0,T]\times \mathbb{R})$. 
\\
\smallskip

    {\bf Case (H2).} \label{proof:case12}  We assume that \eqref{ass:furtheronv} holds. By Proposition~\ref{prop: contract Tot Var}-(ii) and Proposition \ref{p:ubisT}, for any fixed $T, \delta > 0$, it exists a constant $C_{\delta,T}>0$ such that,
    for all $N,M \in \mathbb{N}$, it holds
    \begin{equation*}
    \sup_{t\in [\delta,T]}\left[\totvar \left(\erho(t); \mathbb{R}\right) + \totvar \left(\erhoM(t); \mathbb{R}\right)\right]\leq C_{\delta,T}\,.
    \end{equation*}
    and $\supp(\erho(t)),\supp(\erhoM(t))\subset K_T$ with $K_T$ compact, given by \eqref{e-KT}.

    With the same analysis in~\eqref{inmaininomega}, \eqref{ineqforconvratelater}, it thus follows that, for all $N,M \in \mathbb{N}$, and for all $t\in [\delta,T]$,  it holds
\begin{equation}
\label{ineqforconvratelater-2}
    \begin{aligned}
    \hspace{0.9in}
    &\normone{\erho(t) - \erhoM(t)}{\R}^2
    \\
    &\leq \meas(K_T) \sup_{t\in[\delta,T]}\left[\totvar \left(\erho(t); \mathbb{R}\right) + \totvar \left(\erhoM(t); \mathbb{R}\right)\right]\sqrt{\normone{{F}^N(t) - F^M(t)}{\mathbb{R}}},
    \\
    &\leq 2 C_{\delta,T} \meas(K_T)  \sqrt{\normone{{F}^N(t) - F^M(t)}{\mathbb{R}}}.
\end{aligned}
\end{equation}
Integrating in the time interval $[\delta, T]$ and using the Hölder's inequality as in the previous step we then find that, for all $N,M \in \mathbb{N}$, it holds
\begin{equation*}
\int_\delta^T\normone{\erho(t) - \erhoM(t)}{\mathbb{R}}^2  dt \leq 2C_{\delta,T}\meas(K_T)
    \sqrt{T}\sqrt{\int_\delta^T\normone{{F}^N(t) - F^M(t)}{\mathbb{R}}dt},
\end{equation*}
and
     \begin{equation}
\label{eq:L1boundeulerdens-3}
    \int_\delta^T\normone{\erho(t) - \erhoM(t)}{\mathbb{R}}
    dt  \leq \sqrt{2C_{\delta,T}\meas(K_T)}\cdot 
    \bigg(T\int_\delta^T\normone{{F}^N(t) - F^M(t)}{\mathbb{R}}dt\bigg)^{\frac{1}{4}}.
\end{equation}
Observe now that, for any fixed $\epsilon>0$,
setting $\delta_\epsilon:= \epsilon/(2\,\meas(K_T))$,
we have
\begin{equation}
\label{eq:eq:rho-deltaT-est2}
    \int_0^{\delta_\epsilon}\normone{\erho(t) - \erhoM(t)}{\mathbb{R}}  dt \leq \frac{\epsilon}{2},\qquad\forall N,M \in \mathbb{N}\,.
\end{equation}
On the other hand,  the convergence result \eqref{convwassrho}, together with~\eqref{eq:L1boundeulerdens-3}, implies that 
there exists $N(\epsilon)>0$ such that
\begin{equation}
\label{eq:eq:rho-deltaT-est3}
    \int_{\delta_\epsilon}^T\normone{\erho(t) - \erhoM(t)}{\mathbb{R}}  dt \leq \frac{\epsilon}{2},\qquad\forall N,M\geq N(\epsilon)\,.
\end{equation}
Therefore, combining~\eqref{eq:eq:rho-deltaT-est2}-\eqref{eq:eq:rho-deltaT-est3} we find that, also in case (H2),
for every $T>0$,  the sequence 
$\seq{\erho}$ is a Cauchy sequence in 
$L^1([0,T]\times \mathbb{R})$.
Then we conclude as in case~(H1).

\medskip

{\bf 2.} 
In this step we show that the function 
$\rho$ determined in the previous step
is the weak entropy solution  of the Cauchy problem \eqref{lwr}, and that 
actually the whole sequence $\seq{\erho}$ converges in $L^1_{\loc}([0,+\infty) \times \mathbb{R})$ to $\rho$.


Recalling that $\seq{\erho(0)}$ weakly converges to $\bar{\rho}$ by hypothesis \eqref{ass:firsthypo}, and following the same procedure as in Step 1-Case~1 of the proof of \cite[Theorem 2]{fagioli2017}, we deduce that  $\rho$ is a weak solution to \eqref{lwr} in the sense of Definition \ref{def:weaksolution}. Furthermore, it also holds that $\rho$ satisfies the entropy inequality~\eqref{inentropychenrascle}
by applying the exact same computations as done in the part (vi) of the proof of~\cite[Theorem 3]{di2015rigorous}. In turn, this implies that 
$\rho$ 
is a weak entropy solution  of the Cauchy problem \eqref{lwr},
thanks to Theorem~\ref{chenrascle}.
By merging Step 1 and Step 2, we conclude that,
up to a subsequence, $\seq{\erho}$ converges in $L^1_{\loc}([0,+\infty) \times \mathbb{R})$ to the unique weak entropy solution of~\eqref{lwr}.
Since, with the same arguments, we can show that any subsequence of $\seq{\erho}$ admits
a subsubsequence converging to the  unique
 weak entropy solution of~\eqref{lwr}, it follows that the whole sequence $\seq{\erho}$
 converges to $\rho$. \end{proof}

If $\bar{\rho} \in BV(\mathbb{R})$
    satisfies the assumptions of Theorem~\ref{thm:micromacrointro}, 
    relying on the analysis performed 
    in Step 1 of the above proof,
    one can derive the convergence rate
    for the initial Eulerian discrete density
    $\erho(0)$ associated to the atomization scheme introduced
     in~\cite[(19a)-(19b)]{di2015rigorous}. We recall it here, and prove some relevant properties.
     
\begin{proposition}\label{DiFra-Ros-scheme}
Let $\bar{\rho} \in \mathcal{P}_c(\mathbb{R})$,  with $\|\bar\rho\|_{L^\infty(\R)}\leq 1$.
    Assume that $v$ satisfies (V1).

    Define  $x^N_i(0)$ by \eqref{eq:DFR-def-infpoint}-\eqref{eq:DFR-def}. Let $\erho(0)$ be the corresponding Eulerian discrete density  at time $t=0$, defined as in~\eqref{def:erho}.
Then, the following properties hold:
\begin{itemize}
    \item[(i)]
    \begin{equation}
\label{eq:rho-l1conv}
    \rho^{E,N}(0)
    \rightarrow \bar\rho\quad \text{in}\quad L^1(\R).
\end{equation}
    \item[(ii)] If $\bar{\rho} \in BV(\mathbb{R})$, then there holds
    \begin{equation}
    \label{eq:L1-rate-conv}
        \normone{\erho(0) - \bar{\rho}}{\mathbb{R}}
        \leq \frac{C}{N^{1/4}}
    \qquad\forall~N\in\N\setminus\{0\}\,,
    \end{equation}
    for some constant $C>0$ depending on 
    $\totvar(\bar \rho;\,\mathbb{R})$ and on 
    the measure of the support of $\bar{\rho}$.
    \end{itemize}
\end{proposition}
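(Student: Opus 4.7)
The plan is to base both statements on the single pointwise estimate
\[
\|F_{\rho^{E,N}(0)} - F_{\bar\rho}\|_{L^\infty(\mathbb{R})} \leq \frac{1}{N},
\]
which comes for free from the structure of the scheme \eqref{eq:DFR-def}. Indeed, the recursion yields $\int_{x_{j-1}^N(0)}^{x_j^N(0)}\bar\rho(y)\,dy = 1/N$, whence $F_{\bar\rho}(x_j^N(0)) = j/N$, and a direct inspection of \eqref{Frho} shows $F_{\rho^{E,N}(0)}(x_j^N(0)) = j/N$ as well; since both functions are non-decreasing with values in $[j/N, (j+1)/N]$ on each cell $[x_j^N(0), x_{j+1}^N(0)]$, their pointwise distance is bounded by $1/N$. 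Moreover, \eqref{eq:DFR-def-infpoint} together with the recursion forces $x_N^N(0) = \sup(\supp\bar\rho)$, so that $K := [\inf(\supp\bar\rho), \sup(\supp\bar\rho)]$ contains the support of $\rho^{E,N}(0) - \bar\rho$ for every $N$.

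For statement (i), the key observation is that $\rho^{E,N}(0)$ coincides with the $L^2$-orthogonal projection $P_N\bar\rho$ of $\bar\rho$ onto the step functions constant on each cell $[x_j^N(0), x_{j+1}^N(0))$: by \eqref{eq:DFR-def} the value $\rho^N_j(0) = 1/(N(x_{j+1}^N(0) - x_j^N(0)))$ is precisely the cell average of $\bar\rho$. From the pointwise bound on cumulatives, integration by parts against any $\phi \in C_c^1(\mathbb{R})$ gives $\int \rho^{E,N}(0)\,\phi\,dx \to \int \bar\rho\,\phi\,dx$, and density of $C_c^1(\mathbb{R})$ in $L^1(\mathbb{R})$ combined with the uniform bound $\|\rho^{E,N}(0)\|_{L^\infty} \leq 1$ upgrades this to weak-$*$ convergence in $L^\infty(\mathbb{R})$. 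Testing against $\bar\rho \in L^1(\mathbb{R})$ and using the projection identity $\int \rho^{E,N}(0)\,\bar\rho\,dx = \|\rho^{E,N}(0)\|_{L^2}^2$ then yield
\[
\|\rho^{E,N}(0) - \bar\rho\|_{L^2}^2 = \|\bar\rho\|_{L^2}^2 - \int \rho^{E,N}(0)\,\bar\rho\,dx \longrightarrow 0,
\]
and Cauchy--Schwarz, thanks to the common support $K$, upgrades this $L^2$-convergence to $L^1$, establishing \eqref{eq:rho-l1conv}.

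For statement (ii), I will repeat the integration-by-parts computation that produces \eqref{inmaininomega} in the proof of Theorem \ref{thm:micromacrointro}, replacing $\erhoM$ by $\bar\rho$ and taking $t = 0$, to obtain
\[
\|\rho^{E,N}(0) - \bar\rho\|_{L^1}^2 \leq \meas(K)\,\|F_{\rho^{E,N}(0)} - F_{\bar\rho}\|_{L^\infty}\bigl[\totvar(\rho^{E,N}(0);\mathbb{R}) + \totvar(\bar\rho;\mathbb{R})\bigr].
\]
A preliminary uniform BV bound $\totvar(\rho^{E,N}(0); \mathbb{R}) \leq 2\,\totvar(\bar\rho; \mathbb{R}) + 2$ is derived by controlling each cross-cell jump $|\rho^{E,N}_{j+1} - \rho^{E,N}_j|$ by the variation of $\bar\rho$ on the union of the two adjacent cells (using that a cell average lies between the essential infimum and essential supremum of $\bar\rho$ on the cell), together with two boundary jumps at $x_0^N(0)$ and $x_N^N(0)$, each of size at most $\|\bar\rho\|_{L^\infty}\leq 1$. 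Applying Lemma \ref{convl1impliesconvlinf} to the $1$-Lipschitz difference $F_{\rho^{E,N}(0)} - F_{\bar\rho}$ and then using the $L^1$ bound $\|F_{\rho^{E,N}(0)} - F_{\bar\rho}\|_{L^1} \leq \meas(K)/N$ (itself a consequence of the first paragraph) makes the right-hand side of order $N^{-1/2}$, which gives the claimed rate \eqref{eq:L1-rate-conv}. The main subtlety I anticipate is the BV bound for $\rho^{E,N}(0)$: vacuum regions of $\bar\rho$ may be absorbed into a single wide cell, so the argument relies essentially on the inequalities $\inf_{I_j}\bar\rho \leq \rho^N_j(0) \leq \sup_{I_j}\bar\rho$ (interpreted in the essential sense) to control the jumps uniformly in $N$.
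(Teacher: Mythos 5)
Your proof is correct, and for part (i) it takes a genuinely different route from the paper. The paper proves \eqref{eq:rho-l1conv} by splitting $(\bar{x}_{\min},\bar{x}_{\max})$ into the set of points whose cells shrink to a point (where Lebesgue differentiation gives a.e.\ convergence of the cell averages) and the set absorbed into non-shrinking cells (where it shows $\bar\rho=0$ a.e.\ and $\rho^{E,N}(0)\to 0$), and then concludes by dominated convergence. Your argument instead exploits the observation that $\rho^{E,N}(0)$ is the $L^2$-orthogonal projection of $\bar\rho$ onto the cell-wise constants, combines the explicit bound $\|F_{\rho^{E,N}(0)}-F_{\bar\rho}\|_{L^\infty}\le 1/N$ with integration by parts to get weak-$*$ convergence, and then upgrades weak to strong convergence via the Hilbert-space identity $\|P_N\bar\rho-\bar\rho\|_{L^2}^2=\|\bar\rho\|_{L^2}^2-\langle P_N\bar\rho,\bar\rho\rangle$. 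This is cleaner and avoids the measure-theoretic case analysis; the paper's route, on the other hand, makes the mechanism of convergence (and the role of vacuum cells) more transparent. For part (ii) both proofs rest on the same interpolation inequality \eqref{inmaininomega}; the differences are that you write it directly against $\bar\rho$ rather than passing to the limit $M\to\infty$ using part (i), and that you re-derive a uniform BV bound $\totvar(\rho^{E,N}(0);\R)\le 2\totvar(\bar\rho;\R)+2$ from the fact that cell averages lie between the essential infimum and supremum on each cell, whereas the paper cites the sharper contraction $\totvar(\rho^{E,N}(0);\R)\le\totvar(\bar\rho;\R)$ from \cite{di2015rigorous}; either bound suffices since the constant is allowed to depend on $\totvar(\bar\rho;\R)$. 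One remark: your detour through Lemma~\ref{convl1impliesconvlinf} in part (ii) is wasteful, since your own first paragraph already gives the pointwise bound $\|F_{\rho^{E,N}(0)}-F_{\bar\rho}\|_{L^\infty}\le 1/N$; inserting that directly into your version of \eqref{inmaininomega} yields the stronger rate $\normone{\erho(0)-\bar\rho}{\mathbb{R}}\le C N^{-1/2}$, which in particular implies the claimed $N^{-1/4}$ rate.
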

\begin{proof}We prove (i). Observe that by definitions~\eqref{eq:DFR-def-infpoint}, \eqref{eq:DFR-def}, one has
\begin{equation}
     \label{eq:DFR-def-suppoint}
         x_N^N(0)=\bar{x}_{\max}.
     \end{equation}
For any $x\in (\bar{x}_{\min},\,  \bar{x}_{\max})$, let \begin{equation*}
    I^N(x):= [x_{j_{_{N}}}^N(0), x_{j_{_{N}}+1}^N(0)),
\end{equation*}
be the interval containing $x$ for some $j_N\in\{0,\dots, N-1\}$,
and set 
\begin{equation*}
I(x):=\cap_N I^N(x).
\end{equation*}
Then we can decompose 
$(\bar{x}_{\min},\,  \bar{x}_{\max})$ as the disjoint union of the sets
\begin{equation}
\label{eq:I12decomp}
    \mathcal{I}_1:=\{x\in (\bar{x}_{\min},\,  \bar{x}_{\max})\ \big| \ I(x)=\{x\}\},\qquad\quad
     \mathcal{I}_2:=\{x\in (\bar{x}_{\min},\,  \bar{x}_{\max})\ \big| \ \{x\}\subsetneq I(x)\}\,.
\end{equation}
Notice that 
\begin{equation}
\label{eq:vanishingmeasIN}
    \lim_{N\to\infty} \meas(I^N(x))=0\qquad\forall~x\in \mathcal{I}_1.
\end{equation}
Moreover, observe that, by definitions~\eqref{eq:minimun-initial-distance}, \eqref{defofrhoj}, \eqref{def:erho}, \eqref{eq:DFR-def},
we have
\begin{equation}
\label{eq:l1-est-barrho-1}
    \rho^{E,N}(0,x)-\bar \rho(x)=\frac{1}{\meas(I^N(x))}\int_{I^N(x)}\!\!\big(\bar\rho(y)-\bar\rho(x)\big)\,dy\,,\qquad \forall~x\in (\bar x_{min},\bar x_{max}).
\end{equation}
Therefore, since $\bar\rho\in L^1(\R)$, by the Lebesgue differentiation theorem (e.g. see~\cite[\S~3.4]{folland}) we deduce from~\eqref{eq:vanishingmeasIN},
\eqref{eq:l1-est-barrho-1} that 
there holds
\begin{equation}
\label{eq:rhoEN-l1conv-1}
    \lim_{N\to\infty} \rho^{E,N}(0,x)=\bar \rho(x)
    \qquad\text{for  a.e.}\quad x\in \mathcal{I}_1\,.
\end{equation}
On the other hand, by definition~\eqref{eq:I12decomp} 
the set $\mathcal{I}_2$ is the union of intervals $J=[x_J, x'_J]$ with the property that
\begin{equation}
\label{eq:I2prop}
    [x_J, x'_J]\subset [x_{j_{_{N}}}^N(0), x_{j_{_{N}}+1}^N(0)),\qquad\forall~N,
\end{equation}
for some sequence of indices $j_N\in\{0,\dots, N-1\}$.
Hence, by definition we derive 
\begin{equation*}
    \rho^{E,N}(0,x)=\frac{1/N}{x_{j_{_{N}}+1}^N(0)-x_{j_{_{N}}}^N(0)}\leq \frac{1/N}{x'_J-x_J}\qquad\forall~x\in[x_J, x'_J],\qquad \forall~N\in\N\,,
\end{equation*}
which yields
\begin{equation}
\label{eq:rhoEN-l1conv-2}
    \lim_{N\to\infty} \rho^{E,N}(0,x)=0\qquad \forall~x\in [x_J, x'_J].
\end{equation}
Next observe that by definition~\eqref{eq:DFR-def}
and because of~\eqref{eq:I2prop}, we have
\begin{equation*}
    \int_{x_J}^{x'_J}\bar\rho(x)\,dx\leq 
    \int_{x_{j_{_{N}}}^N(0)}^{x_{j_{_{N}}+1}^N(0)}
    \bar\rho(x)\,dx=\frac{1}{N}\qquad\forall N,
\end{equation*}
which implies 
\begin{equation*}
    \int_{x_J}^{x'_J}\bar\rho(x)\,dx=0,
\end{equation*}
and thus we find
\begin{equation}
\label{eq:rhoEN-l1conv-3}
    \bar\rho(x)=0\qquad\text{for  a.e.}\quad x\in [x_J, x'_J]\,.
\end{equation}
Since $\mathcal{I}_2$ is the union of intervals of the form $[x_J, x'_J]$, we deduce from~\eqref{eq:rhoEN-l1conv-2}-\eqref{eq:rhoEN-l1conv-3} that
\begin{equation}
\label{eq:rhoEN-l1conv-4}
    \lim_{N\to\infty} \rho^{E,N}(0,x)=\bar \rho(x)
    \qquad\text{for  a.e.}\quad x\in \mathcal{I}_2\,.
\end{equation}
Then, by the dominated convergence theorem we derive
from~\eqref{eq:rhoEN-l1conv-1}, \eqref{eq:rhoEN-l1conv-4} that~\eqref{eq:rho-l1conv} is verified.\\

We now prove (ii). By the proof of~\cite[Proposition 4]{di2015rigorous} it holds
    \begin{align}
    \label{eq:w1-est-dfr}
        W_1(\erho(0), \bar{\rho}) \leq \frac{2\big(\bar{x}_{\max}-\bar{x}_{\min}\big)}{N}\qquad\forall~N\in\N\setminus\{0\}.
    \end{align}
    By using  the notation in~\eqref{eq:cumdistr-short}, this implies that $\{{F}^N(0)\}_{N\in\mathbb{N}}$ converges to $F_{\bar{\rho}}$ in $L^1(\mathbb{R})$, as $N\to\infty$.
    Moreover, by~\cite[Proposition 5]{di2015rigorous}
    we have
     $\totvar\big(\erho(0); \mathbb{R}\big)\leq \totvar\big(\bar \rho; \mathbb{R}\big)$
    for all~$N$.
    On the other hand, 
    relying on~\eqref{eq:rho-l1conv}, and
    taking the limit as $M\to\infty$
    in the inequality \eqref{inmaininomega} at $t=0$,
    with $K=[\bar{x}_{\min},\, \bar{x}_{\max}]$,
    we get  that, for all $N$, there holds
    \begin{equation}
    \label{eq:w1-est2-dfr}
        \begin{aligned}
    \normone{\erho(0) - \bar{\rho}}{\mathbb{R}} = \normone{\erho(0) - \bar{\rho}}{K}&\leq \sqrt{2 C_1 ( \bar{x}_{\max}-\bar{x}_{\min})}\normone{{F}^N(0) - F_{\bar{\rho}}}{\mathbb{R}}^{\frac{1}{4}}
    \\
    &=\sqrt{2 C_1 ( \bar{x}_{\max}-\bar{x}_{\min})}\left(W_1(\erho(0), \bar{\rho})\right)^{\frac{1}{4}},
    \end{aligned}
    \end{equation}
    where 
    $C_1=\totvar \left(\bar{\rho}; \mathbb{R}\right)$.
    Thus, combining~\eqref{eq:w1-est-dfr}-\eqref{eq:w1-est2-dfr}, we deduce~\eqref{eq:L1-rate-conv}.    
\end{proof}

\begin{remark}
Under the same assumptions of Theorem~\ref{thm:micromacrointro},
we can also
deduce that the sequence of  empirical measures $\seq{\drho}$ defined in~\eqref{def:drho}  converges in $L^1_{\loc}([0,+\infty];W_1)$ to the unique weak entropy solution $\rho$ of \eqref{lwr}. Indeed, fix $T>0$ and notice that
    \begin{align}\label{indiraceuler}
        \int_0^T W_1(\drho(t),\rho(t))dt \leq  \int_0^T W_1(\drho(t),\erho(t))dt +  \int_0^T W_1(\erho(t),\rho(t))dt.
    \end{align}
Moreover, recalling \eqref{defofwass}, \eqref{pseudoinverseeul} and \eqref{pseudoinversedir}, it holds  
\begin{eqnarray*}
            W_1(\drho(t),\erho(t))&=&\int_0^1|X^{E,N}(z)-X^{D,N}(z)|dz = \sum_{j=0}^{N-1}y_j^N\int_{jl}^{(j+1)l}[z - jl] dz 
        \\
        &&= \frac{l}{2}\sum_{j=0}^{N-1}x^N_{j+1}(t) - x^N_j(t)=\frac{l}{2}\left(x^N_N(t) - x^N_0(t)\right).
\end{eqnarray*}
   Therefore we have
   \begin{eqnarray*}
       W_1(\drho(t),\erho(t))&=&
         \int_0^T\int_0^1|X^{E,N}(t,z)-X^{D,N}(t,z)|dz \leq \frac{T(x^N_N(0) - x^N_0(0) + v_{\max}T)}{2N}.
   \end{eqnarray*}
Thanks to the condition of uniformly bounded initial support \eqref{cond:ubis}, it holds 
    \begin{align}\label{limdrhoerho}
        \sendlim{\int_0^T W_1(\drho(t),\erho(t))dt}.
    \end{align}
   Observe now that, 
   %
%
    invoking Proposition \ref{p:ubisT},
    recalling \eqref{defofwass}-\eqref{Frho}, and using
    Poincaré's inequality, we have
    \begin{equation*}
        W_1(\erho(t),\rho(t))=
        \normone{{F}^N(t) - F(t)}{\mathbb{R}}
        \leq C_1 \normone{\erho(t) - \rho(t)}{\mathbb{R}}
        \qquad\forall~t\in [0,T],
    \end{equation*}
    for some constant $C_1>0$. 
    Then, integrating on $[0,T]$, we derive 
    \begin{align}\label{inpoincare}
        \int_0^T W_1(\erho(t),\rho(t))dt \leq C_1 \normone{\erho - \rho}{[0,T] \times \mathbb{R}}.
    \end{align}
   By merging \eqref{indiraceuler},
   \eqref{limdrhoerho}, \eqref{inpoincare} and Theorem \ref{thm:micromacrointro}, it holds
    \begin{align*}
        \sendlim{   \int_0^T W_1(\drho(t),\rho(t))dt}
        \qquad\forall~T>0.
    \end{align*}
\end{remark}

The next Proposition shows how to 
define an atomization scheme
$\{\tilde{x}_j^N(0)\}_{j=0}^N$
different from the one in~\cite{di2015rigorous}, 
whose corresponding initial Eulerian discrete density
 satisfies the assumption~\eqref{ass:firsthypo} of~Theorem~\ref{thm:micromacrointro}.
Thus, if the velocity function satisfies (V2),
according with Theorem~\ref{thm:micromacrointro}, 
the scheme $\{\tilde x_j^N(0)\}_{j=0}^N$ leads to an Eulerian discrete density $\erhotilde(t,x)$ which
 still converges in $L^1_{loc}$ 
to the weak entropy solution of the Cauchy problem \eqref{lwr}. 
On the other hand, we also show that 
the initial Eulerian discrete density associated to  $\{\tilde x_j^N(0)\}_{j=0}^N$ does not satisfy assumption (H1) of 
Theorem~\ref{thm:micromacrointro}.
Hence, in this case one would not expect that
$\erhotilde(t,x)$  converges 
to the weak entropy solution of the Cauchy problem \eqref{lwr}. However, we will discuss
in Remark~\ref{rem:FtLnot convergingtoLWR}
some numerical simulations that seem to suggest 
that such convergence holds for $\erhotilde(t,x)$ too.


\begin{proposition}\label{p:differentscheme-f}
    Let $\bar{\rho} \in \mathcal{P}_c(\mathbb{R})$, with $\|\bar\rho\|_{L^\infty(\R)}\leq 1/2$.
    Assume that $v$ satisfies (V1).
    Define  $\{x_j^N(0)\}_{j=0}^N$ as in \eqref{eq:DFR-def-infpoint}-\eqref{eq:DFR-def}.
   Also define $\{\tilde x_j^N(0)\}_{j=0}^N$
   as follows
\begin{equation}
\label{defof: tilde x_j}
    \tilde x_j^N(0)=\begin{cases}
    x_j^N(0)&\mbox{~~if~} \ j\mbox{~is~even or }j=N,\\
    \noalign{\medskip}
    \dfrac{x_{j-1}^N(0)+x_j^N(0)}2&\mbox{~~if~} \ j<N\mbox{~is~odd}.
\end{cases}
\end{equation}
Let $\tilde\rho^{E,N}(0)$ be the corresponding Eulerian discrete density at time $t=0$, defined as in~\eqref{def:erho}.
Then, the following properties hold:
\begin{itemize}
    \item[(i)] $\tilde \rho^{E,N}(0)  \rightharpoonup \bar{\rho}
 \ \ \text{weak\,}^*\ \ \text{in}\ \ L^\infty(\mathbb{R})$;
    \item[(ii)] 
$\tilde\rho^{E,N_k}(0)\nrightarrow
  \bar{\rho}\ \  \text{in}\quad L^1(\mathbb{R})$,
  for every subsequence $\{\tilde\rho^{E,N_k}(0)\}_k$\,;
    \item[(iii)] The sequence $TV\big(\erhotilde(0)\big)$ is unbounded.
\end{itemize}
\end{proposition}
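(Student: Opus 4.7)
The plan is to exploit two structural facts about the perturbed scheme: per-pair mass preservation and a doubling identity. Grouping indices in pairs, let $I_k:=[x_{2k}^N(0),x_{2k+2}^N(0))$ denote the pair intervals (handling the trailing interval separately when $N$ is odd). The midpoint construction immediately yields $\tilde x^N_{2k+1}(0)-\tilde x^N_{2k}(0)=(x^N_{2k+1}(0)-x^N_{2k}(0))/2$, hence $\tilde\rho^{E,N}_{2k}(0)=2\rho^{E,N}_{2k}(0)$ on a subinterval of length $(x^N_{2k+1}(0)-x^N_{2k}(0))/2$. By construction one has the identity $\int_{I_k}\tilde\rho^{E,N}(0)\,dx=\int_{I_k}\rho^{E,N}(0)\,dx=\int_{I_k}\bar\rho\,dx=2/N$. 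I will use the hypothesis $\|\bar\rho\|_{L^\infty}\leq 1/2$ to deduce $\rho^{E,N}_j(0)\leq 1/2$ and hence $x^N_{j+1}(0)-x^N_j(0)\geq 2/N$, which in turn ensures $\|\tilde\rho^{E,N}(0)\|_{L^\infty}\leq 1$ and keeps all $\tilde x^N_j(0)$ inside $[\bar x_{\min},\bar x_{\max}]$.

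For (i), the uniform $L^\infty$ bound together with density of $C^\infty_c(\R)$ in $L^1(\R)$ lets me reduce to test functions $\phi\in C^\infty_c(\R)$. I split $\int\phi(\tilde\rho^{E,N}(0)-\bar\rho)\,dx=\int\phi(\tilde\rho^{E,N}(0)-\rho^{E,N}(0))\,dx+\int\phi(\rho^{E,N}(0)-\bar\rho)\,dx$; the second summand tends to $0$ by Proposition~\ref{DiFra-Ros-scheme}(i). For the first, per-pair mass preservation lets me replace $\phi$ on $I_k$ by $\phi-\phi(x_{2k}^N(0))$, so each piece is bounded by $\|\phi'\|_\infty|I_k|\cdot(4/N)$; summing yields $O((\bar x_{\max}-\bar x_{\min})/N)\to 0$.

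For (ii), on the first half $[x^N_{2k}(0),(x^N_{2k}(0)+x^N_{2k+1}(0))/2)$ of each pair interval the doubling identity gives $|\tilde\rho^{E,N}(0)-\rho^{E,N}(0)|=\rho^{E,N}_{2k}(0)$, contributing exactly $1/(2N)$ to the $L^1$ norm. Summing over the $\lfloor N/2\rfloor$ pairs yields $\|\tilde\rho^{E,N}(0)-\rho^{E,N}(0)\|_{L^1(\R)}\geq (N-1)/(4N)$. Combined with $\|\rho^{E,N}(0)-\bar\rho\|_{L^1(\R)}\to 0$ (Proposition~\ref{DiFra-Ros-scheme}(i)) and the triangle inequality, this gives $\liminf_N\|\tilde\rho^{E,N}(0)-\bar\rho\|_{L^1(\R)}\geq 1/4$, ruling out $L^1$ convergence along any subsequence.

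For (iii), I argue by contradiction. If $\{\totvar(\tilde\rho^{E,N}(0);\R)\}_N$ were bounded, then Helly's compactness theorem applied to $\{\tilde\rho^{E,N}(0)\}_N$ -- uniformly bounded in $L^\infty$ with supports in the fixed compact $[\bar x_{\min},\bar x_{\max}]$ -- would extract a subsequence converging in $L^1(\R)$; by (i) the only possible $L^1$-limit is $\bar\rho$, contradicting (ii). The main obstacle is (i): converting per-pair mass preservation into genuine weak$^*$ convergence requires the Lipschitz control on test functions afforded by the density reduction, and the quantitative $1/N$ decay crucially relies on the shrinking length of each $I_k$.
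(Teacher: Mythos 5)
Your proposal is correct and follows essentially the same route as the paper: the per-pair mass identity $\int_{I_k}\tilde\rho^{E,N}(0)\,dx=\int_{I_k}\bar\rho\,dx=2/N$ tested against Lipschitz functions for (i), the exact $\tfrac{1}{2N}$-per-pair contribution on the halved subintervals giving the uniform $L^1$ lower bound $(N-1)/(4N)$ for (ii), and the Helly-compactness contradiction for (iii). The only cosmetic difference is that in (i) you detour through $\rho^{E,N}(0)$ and Proposition~\ref{DiFra-Ros-scheme}(i) rather than comparing $\tilde\rho^{E,N}(0)$ with $\bar\rho$ directly, which changes nothing of substance; your explicit use of $\|\bar\rho\|_{L^\infty}\leq 1/2$ to get $\|\tilde\rho^{E,N}(0)\|_{L^\infty}\leq 1$ is a welcome clarification of a point the paper leaves implicit.
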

\begin{proof} Denote with $\lfloor a \rfloor$ the integer part of $a$ and define $N':=\lfloor\frac{N}{2}\rfloor-1$.

We first prove (i).   By~\eqref{eq:DFR-def}, \eqref{defof: tilde x_j}, 
and because of definitions~\eqref{eq:minimun-initial-distance}, \eqref{defofrhoj}, \eqref{def:erho}, 
we have 
\begin{equation}
\label{eq:int between-x2j-1}
    \int_{x_{2j}^N(0)}^{x_{2j+2}^N(0)}  \tilde \rho^{E,N}(0,x)\,dx=\int_{x_{2j}^N(0)}^{x_{2j+2}^N(0)} \bar \rho(x)\,dx =\frac{2}{N} \qquad\  \forall~j=0,\ldots,N'.
\end{equation}
In the same way, if $N$ is odd,
and thus $2\lfloor \frac{N}{2}\rfloor = N-1,$
we find
\begin{equation*}
\int_{x_{N-1}^N(0)}^{x_{N}^N(0)} \tilde\rho^{E,N}(0,x)\,dx =\int_{x_{N-1}^N(0)}^{x_{N}^N(0)} \bar \rho(x)\,dx
= \frac{1}{N}.
\end{equation*}
Consider a test function $\varphi\in C^\infty_c(\R)$,
and set
$$a_j^N:=\min_{x\in[x_{2j}^N,x_{2j+2}^N]}\varphi(x),\qquad
b_j^N:=\max_{x\in[x_{2j}^N,x_{2j+2}^N]}\varphi(x)
\qquad \quad \forall~j=0,\ldots,N'.$$
By monotonicity of the integral (i.e. $\varphi\leq \psi\Rightarrow \int \varphi\rho\,dx\leq \int \psi\rho \,dx$ for $\rho\geq 0$), it follows from~\eqref{eq:int between-x2j-1}
that
\begin{equation}
\label{eq:intjxrhoel}
\begin{aligned}
    \int_{x_{2j}^N(0)}^{x_{2j+2}^N(0)} \varphi(x)\bar \rho(x)\,dx 
    \in\frac{2}N[a_j^N,b_j^N],\qquad\quad 
\int_{x_{2j}^N(0)}^{x_{2j+2}^N(0)}\varphi(x)\tilde \rho^{E,N}(0,x)\,dx\in\frac{2}N[a_j^N,b_j^N].
\end{aligned}
\end{equation}
Moreover, letting $L$ be the Lipschitz costant of $\varphi$, it holds $b_j^N-a_j^N\leq L (x_{2j+2}^N-x_{2j}^N)$. As a consequence, we derive from~\eqref{eq:intjxrhoel},  that
\begin{equation*}
        \left|\int_{x_{2j}^N(0)}^{x_{2j+2}^N(0)} \varphi(x)\big(\bar \rho(x)-\tilde\rho^{E,N}(0,x)\big)\,dx \right|\leq \frac{2\,L (x_{2j+2}^N-x_{2j}^N)}N  \qquad\ 
         \forall~j=0,\ldots,N'.
    \end{equation*}
For $N$ even, this  implies
\begin{equation} 
 \left|\int_{\R} \varphi(x)\big(\bar \rho(x)-\tilde\rho^{E,N}(0,x)\big)\,dx \right|
\leq \frac{2\,L}N  \sum_{j=0}^{\frac{N}{2}-1}(x_{2j+2}^N-x_{2j}^N)=\frac{2\, L\, (x_N^N-x_0^N)}N .
\label{e-sommatoria-weak-f}
\end{equation}
For $N$ odd, one needs to consider also the additional estimate
\begin{equation*}
        \left|\int_{x_{N-1}^N(0)}^{x_{N}^N(0)} \varphi(x)\big(\bar \rho(x)-\tilde\rho^{E,N}(0,x)\big)\,dx \right|\leq \frac{L (x_{N}^N-x_{N-1}^N)}N.
\end{equation*}
%
that anyway leads to \eqref{e-sommatoria-weak-f}. In both cases, recalling~\eqref{eq:DFR-def-infpoint},
\eqref{eq:DFR-def-suppoint},
we deduce from~\eqref{e-sommatoria-weak-f} 
that
\begin{equation*} 
 \left|\int_{\R} \varphi(x)\big(\bar \rho(x)-\tilde\rho^{E,N}(0,x)\big)\,dx \right|
\leq \frac{2\, L\, (\bar{x}_{\max}-\bar{x}_{\min})}N,
\end{equation*}
which proves (i).
\smallskip

We now prove (ii). 
In view of Proposition~\ref{DiFra-Ros-scheme}-(i), 
in order to establish (ii) it will be sufficient to show that
\begin{equation}
\label{eq:rhotrho-nol1conv}
    \tilde\rho^{E,N_k}(0)-\rho^{E,N_k}(0)
    \nrightarrow 0\quad \text{in}\quad L^1(\R),
\end{equation}
 for every subsequence $\{\tilde\rho^{E,N_k}(0)\}_k$.
To this end, denote with $\rho^{E,N}(0)
    $
 the  Eulerian  discrete densities at time $t=0$, defined as in~\eqref{def:erho} in connection with the scheme $\{x_j^N(0)\}_{j=0}^N$  in \eqref{eq:DFR-def-infpoint}-\eqref{eq:DFR-def}. By \eqref{eq:minimun-initial-distance}, \eqref{defofrhoj}, and \eqref{defof: tilde x_j},
    we have
    \begin{equation*}
       \tilde x_{2j+2}^N(0)-\tilde x_{2j+1}^N(0)=
       \frac{x_{2j+2}^N(0)-x_{2j+1}^N(0)}{2},
    \end{equation*}
    and
\begin{equation*}
    \rho^{E,N}(0,x)=\frac{1/N}{x^N_{2j+2}(0) - x^N_{2j+1}(0)},\qquad\quad
     \tilde\rho^{E,N}(0,x)=\frac{2/N}{x^N_{2j+2}(0) - x^N_{2j+1}(0)},
\end{equation*}
for all $x\in [\tilde x_{2j+1}^N(0),\,\tilde x_{2j+2}^N(0)],$ and for all $j=0,\dots,  N'$.
As a consequence we find
\begin{equation*}
    \int_{\tilde x_{2j+1}^N(0)}^{\tilde x_{2j+2}^N(0)}\Big|\tilde\rho^{E,N}(0,x)-\rho^{E,N}(0,x)\Big|\,dx=
    \frac1{2N},
    \qquad \quad \forall~j=0,\ldots,N',
\end{equation*}
which yields
\begin{eqnarray*}
    \big\|\tilde\rho^{E,N}(0)-\rho^{E,N}(0)\big\|_{L^1(\R)}
    &\geq& \sum_{j=0}^{N'}\int_{\tilde x_{2j+1}^N(0)}^{x_{2j+2}^N(0)}\Big|\tilde\rho^{E,N}(0,x)-\rho^{E,N}(0,x)\Big|\,dx
    \\
    &=&\frac1{2N}(N'+1)\geq \frac{N-1}{4N}>\frac18\qquad\quad \forall~N>2\,.
\end{eqnarray*}
%
%
This implies \eqref{eq:rhotrho-nol1conv},
 thus completing the proof of (ii). 
\smallskip

We finally prove (iii). By contradiction, assume that there exists a subsequence (that we do not relabel) such that  $TV\big(\erhotilde(0)\big)$ is uniformly bounded. Since $\tilde\rho^{E,N}(0)$
is uniformly bounded in $L^\infty(\mathbb{R})$, by Helly's compactness theorem there exists a further subsequence (that we do not relabel) which converges in $L^1(\mathbb{R})$ to some function
$\tilde\rho$. Statement  (i) ensures that $\bar \rho= \tilde \rho$, that is a contradiction with (ii). This proves (iii). \end{proof}

\bigskip

\begin{remark}
    \label{rem:FtLnot convergingtoLWR}
    Given   $\bar{\rho} \in \mathcal{P}_c(\mathbb{R})$,
    consider  the atomization scheme
$\{\tilde x_j^N(0)\}_{j=0}^N$
defined in~\eqref{defof: tilde x_j}, and let $\{\tilde x^N_j(t)\}_{j=0}^{N}$ be the solution of \eqref{ftl}, with traffic velocity given by
\begin{equation}
\label{eq:v-bm}
    v(\rho)=\begin{cases}
        \exp{\left(\dfrac{\rho}{\rho-1}\right)}\  &\text{if} \ \ \rho<1,
        \\
        \noalign{\smallskip}
        \ 0 \   &\text{if} \ \ \rho = 1,
    \end{cases} 
\end{equation}
according with the Bonzani and Mussone's model~\cite{bonz-muss}. It is clear that the associated flux is not strictly concave, see Figure \ref{fig:flux exp}.
\begin{figure}[!htb]
    \centering
    \includegraphics[width=0.6\textwidth]{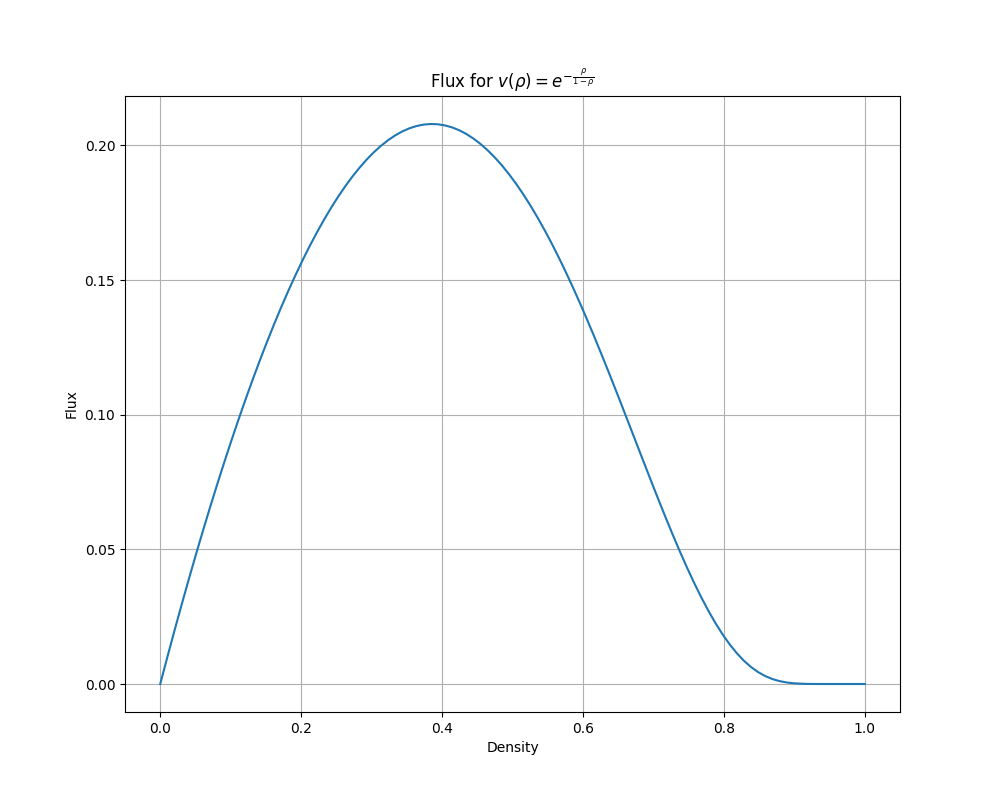}
    \caption{Flux associated with \eqref{eq:v-bm}.}
    \label{fig:flux exp}
\end{figure}

More precisely, it holds $\rho\, v'(\rho)=-\dfrac{\rho\,v(\rho)}{(1-\rho)^2}$, which is a decreasing function in the interval $[0,\,\frac{\sqrt{5}-1}{2}]$, and an increasing function in the interval
$[\frac{\sqrt{5}-1}{2},\, 1]$. Therefore, the velocity $v(\rho)$ in~\eqref{eq:v-bm} does not satisfy the assumption~\eqref{ass:furtheronv}.

On the other hand, we have shown in Proposition~\ref{p:differentscheme-f}-(iii) that, letting 
$\tilde\rho^{E,N}(0)$
    be the  Eulerian  discrete densities at time $t=0$
    corresponding to $\{\tilde x_j^N(0)\}_{j=0}^N$, one has that $TV\big(\erhotilde(0)\big)$ is unbounded. Hence, neither condition (H1) nor (H2)
    of Theorem~\ref{thm:micromacrointro} are satisfied.
    Thus, although $\tilde\rho^{E,N}(0)$ satisfies the
    assumption~\eqref{ass:firsthypo} of Theorem~\ref{thm:micromacrointro} (as shown in Proposition~\ref{p:differentscheme-f}-(i)), 
    one cannot expect that the sequence 
  $\seq{\erhotilde}$ converges 
to the weak entropy solution $\rho$ of the Cauchy problem \eqref{lwr}, in general.

To investigate this behaviour, we discuss here the numerical simulations corresponding to the discretization scheme $\{\tilde x^N_j(t)\}_{j=0}^{N}$,
with velocity $v(\rho)$ in~\eqref{eq:v-bm},
and initial datum\begin{equation*}
    \bar\rho(x) = \frac{1}{2}\chi_{[\frac{1}{2},\,\frac{5}{2}]}(x)\,.
\end{equation*}

The parameters for the numerical simulation are chosen as follows. We set the space step-size $\Delta x = 0.01$, time step-size $\Delta t = 0.001$ and the time period $[0,3]$. Since the system becomes stiffer as $N$ increases, we choose an implicit method to numerically solve the system. In particular, we use an implicit method based on backward-differentiation formulas (BDF) of automatically-varying order (from 1 to 5), already implemented in the Python library \say{scipy} as ‘BDF'. The general framework of such algorithm is described in \cite{byrne1975polyalgorithm} and the Python implementation follows a quasi-constant step size as explained in \cite{shampine1997matlab}.

In Figure \ref{fig:combined}, we show snapshots of the evolution of the two different profiles $\rho^{E,5}(t)$ and $\tilde \rho^{E,5}(t)$. It is remarkable that for some initial short period of time, the fluxes experimented by both profiles are visibly distinct, see Figure \ref{fig:snapshots dynamics in [0,0.23]}. Yet, after some further time, both profiles start exhibiting an extremely similar behavior, (e.g., starting at $t=1$ in Figure \ref{fig:snapshots dynamics in [0,2]}). Indeed, an approximation of a rarefaction wave can be seen on the right and a shock on the left. They then interact, thus causing the decrease of the total variation at the interaction point. This corresponds to the behavior at the macroscopic level. Afterwards, both profiles remain then in the same configuration with decreasing density with respect to $x$, i.e. $\erho(x_1) \leq \erho(x_2)$ with $x_2 \leq x_1$. This property is indeed preserved forward in time by the FtL. 

In Figure \ref{fig:difference evol for different N}, we show the evolution in time of the $L^1$ distance (in the space variable) between $\erho(t)$ and $\tilde\erho(t)$, for different values of $N$. The striking phenomenon is that the $L^1$ distance is a decreasing function of time, with a very strong decay as $N$ increases. This suggests that the following result might hold for this initial datum: for every $t>0$ and for arbitrary $\epsilon>0$, there exists $N > 0$ such that
\begin{align*}
    \int_0^t \normone{\erho(t) - \erhotilde(t)}{\mathbb{R}} \leq \epsilon.
\end{align*}
In particular, it seems that the discrepancy between the approximating solutions is arbitrarily small for arbitrarily small time. In other terms, Theorem \ref{thm:micromacrointro} might be non-sharp and a more general result of convergence might be available.



\begin{figure}[htbp]
    \centering
    \begin{subfigure}[b]{0.49\textwidth}
        \centering
        \includegraphics[width=\textwidth]{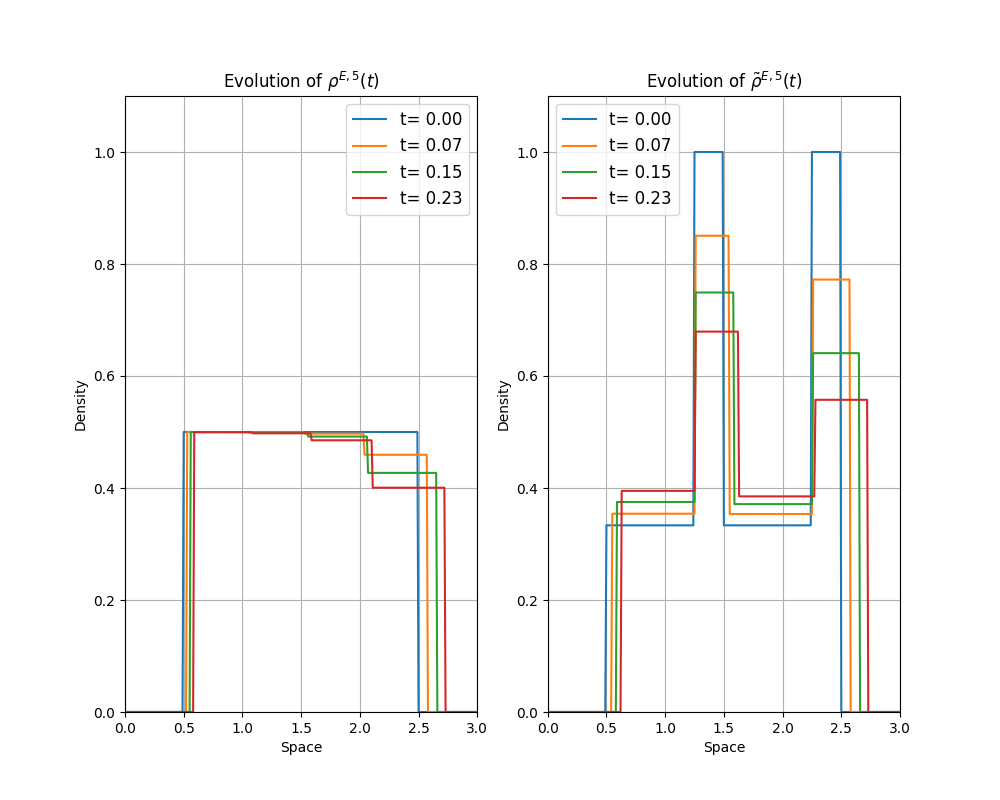}
        \caption{Evolution in the time period $[0,0.23]$}
        \label{fig:snapshots dynamics in [0,0.23]}
    \end{subfigure}
    \begin{subfigure}[b]{0.49\textwidth}
        \centering
        \includegraphics[width=\textwidth]{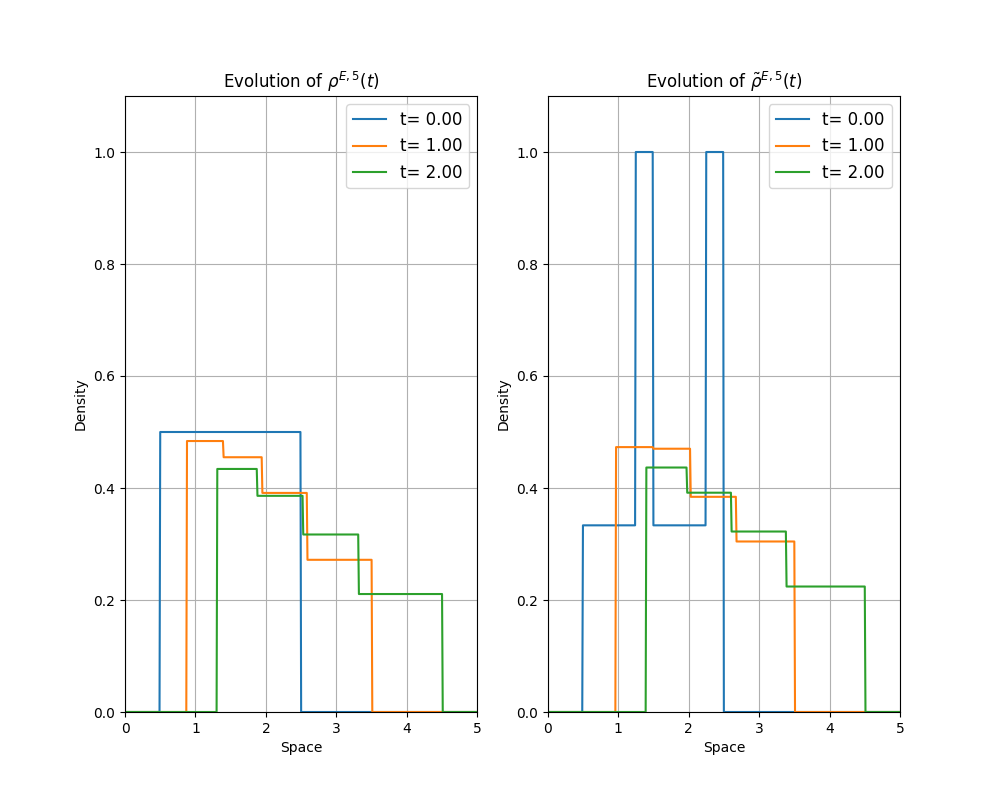}
        \caption{Evolution in the time period $[0,2]$}
        \label{fig:snapshots dynamics in [0,2]}
    \end{subfigure}
    \caption{Snapshots of the dynamics of $\rho^{E,5}(t)$ and $\tilde \rho^{E,5}(t)$.} 
    \label{fig:combined}
\end{figure}

\begin{figure}[!htb]
    \centering
    \includegraphics[width=0.9\textwidth]{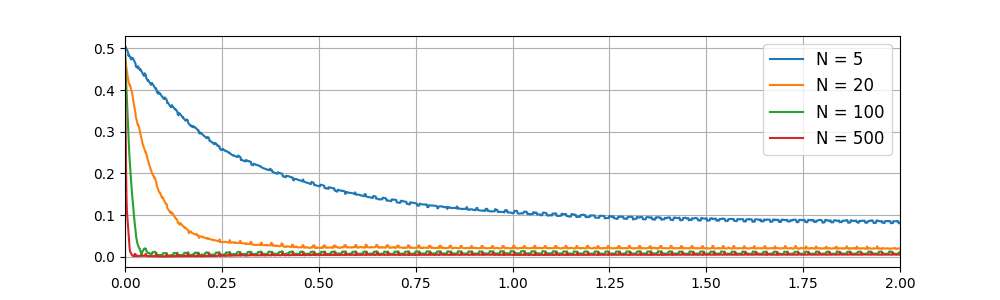}
    \caption{Evolution of  $\|\rho^{E,N}(t)-\tilde \rho^{E,N}(t)\|_{L^1(\R)}$ for $N=5,20,100,500$.}
    \label{fig:difference evol for different N}
\end{figure}

\end{remark}

\section{Proof of Theorem \ref{thm:stabilitytheoremintro}}
\label{sectionmainstability}

In this section, we prove Theorem \ref{thm:stabilitytheoremintro}, which provides a stability result for two different Eulerian discrete densities, in both the Wasserstein  and the $L^1$ norm. 
Here, we compare two solutions $\{x^N_j(t)\}_{j=0}^{N}$, $\{\tilde{x}^N_j(t)\}_{j=0}^{N}$ of the FtL model \eqref{ftl} and the corresponding Eulerian discrete densities $\erho$, $\erhotilde$ defined by \eqref{def:erho}.

We first state three propositions, which lead to the proof of the main theorem.
\begin{proposition}\label{lagrangianimplieswasserstein}
      Given two sequences $\{x_j^N\}_{j=0}^N$,$\{\tilde{x}_j^N\}_{j=0}^N$, 
      satisfying conditions~\eqref{eq:initial-vehicles}-\eqref{eq:minimun-initial-distance},
      assume that $x_N^N = \tilde{x}_N^N$. Consider the corresponding Eulerian densities $\erho$, $\erhotilde\in L^{\infty}(\mathbb{R})$ defined by \eqref{def:erho}. Then, it holds 
   \begin{align}
   \label{eq:w1euldensest1}
       W_1(\rho^{E,N}, \tilde{\rho}^{E,N}) \leq 2\sum_{j=0}^{N-1}|x_{j+1} - x_j - (\tilde{x}_{j+1} - \tilde{x}_j) |
   \end{align}
    
\end{proposition}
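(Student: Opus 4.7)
The plan is to exploit the characterization of $W_1$ via pseudo-inverses, together with the explicit piecewise-linear form of the pseudo-inverse associated to an Eulerian discrete density, and then translate differences of vehicle positions into differences of consecutive spacings using the boundary condition $x_N^N=\tilde x_N^N$.

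Concretely, I would start from Definition~\ref{def:wassersteindistance}, which gives
\begin{equation*}
    W_1(\rho^{E,N},\tilde\rho^{E,N}) = \int_0^1 |X_{\rho^{E,N}}(z)-X_{\tilde\rho^{E,N}}(z)|\,dz.
\end{equation*}
From formula~\eqref{pseudoinverseeul}, and using $\rho_j^N=l/(x_{j+1}-x_j)$, the pseudo-inverse on $[jl,(j+1)l)$ is the affine interpolation $X_{\rho^{E,N}}(z)=(1-s)x_j+s\,x_{j+1}$ with $s=(z-jl)/l\in[0,1)$, and analogously for $\tilde X$. Hence the integrand on $[jl,(j+1)l)$ equals $|(1-s)(x_j-\tilde x_j)+s(x_{j+1}-\tilde x_{j+1})|$. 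Applying the triangle inequality and integrating in $s$ gives
\begin{equation*}
\int_{jl}^{(j+1)l}|X_{\rho^{E,N}}-X_{\tilde\rho^{E,N}}|\,dz \;\le\; \frac{l}{2}\bigl(|x_j-\tilde x_j|+|x_{j+1}-\tilde x_{j+1}|\bigr).
\end{equation*}

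Next I would sum over $j=0,\dots,N-1$; the telescoping effect of the endpoints and the hypothesis $x_N^N=\tilde x_N^N$ yield
\begin{equation*}
W_1(\rho^{E,N},\tilde\rho^{E,N})\;\le\; l\sum_{j=0}^{N-1}|x_j-\tilde x_j|.
\end{equation*}
At this point the key step is to rewrite each position difference via the condition $x_N=\tilde x_N$ as the telescoping sum
\begin{equation*}
x_j-\tilde x_j \;=\; -\sum_{k=j}^{N-1}\bigl[(x_{k+1}-x_k)-(\tilde x_{k+1}-\tilde x_k)\bigr],
\end{equation*}
whence $|x_j-\tilde x_j|\le \sum_{k=0}^{N-1}|(x_{k+1}-x_k)-(\tilde x_{k+1}-\tilde x_k)|$. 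Plugging this uniform bound into the previous display and using $l\cdot N=1$ would already give the estimate with constant $1$; the stated constant $2$ follows with room to spare.

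The argument is essentially a book-keeping exercise, so there is no real analytical obstacle: the only point to be careful about is keeping the indices aligned in the telescoping step and using the boundary condition $x_N^N=\tilde x_N^N$ precisely there (it would fail without it, since absolute values of $x_j-\tilde x_j$ could otherwise grow unboundedly with respect to the sum of spacing differences). Note also that the non-overlap condition~\eqref{eq:minimun-initial-distance} is used only implicitly to ensure that the $x_j^N$ (and $\tilde x_j^N$) are strictly ordered and therefore that formula~\eqref{pseudoinverseeul} for the pseudo-inverse is valid.
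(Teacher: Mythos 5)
Your proof is correct and follows essentially the same route as the paper's: both pass to the pseudo-inverses via Definition~\ref{def:wassersteindistance}, use the affine form \eqref{pseudoinverseeul} of $X_{\rho^{E,N}}$ on each cell $[jl,(j+1)l)$, and telescope from the common right endpoint $x_N^N=\tilde x_N^N$ to convert position differences into spacing differences. The only difference is bookkeeping: by integrating the affine interpolant exactly in $z$ instead of bounding it pointwise, you arrive at the estimate with constant $1$ rather than $2$, so your argument in fact sharpens \eqref{eq:w1euldensest1}.
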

\begin{proof}
For any fixed  $j =0,..., N-1$, and 
for every $z \in [jl, (j+1)l)$, recalling the definition of $y_j^N$ in \eqref{def:defofyj}, we have
\begin{align*}
     \left|x^N_j - \tilde{x}^N_j + (z-jl)\left(y^N_j  - \tilde{y}^N_j\right)\right| &= \left|x^N_j - \tilde{x}^N_j + \frac{z - jl}{l}\left(x^N_{j+1} - \tilde{x}^N_{j+1}  - (x^N_j - \tilde{x}^N_j)\right)\right| \\
    &\leq \left|x^N_j - \tilde{x}^N_j\right| + \left|x^N_{j+1} - \tilde{x}^N_{j+1}  - (x^N_j - \tilde{x}^N_j)\right|\\
    &\leq 2\left|x^N_j - \tilde{x}^N_j - (x^N_{j+1} - \tilde{x}^N_{j+1} )\right|+ \left|x^N_{j+1} - \tilde{x}^N_{j+1}\right| \\
    &\leq 2\left(\sum_{k=j}^{N-1}\left|x^N_k - \tilde{x}^N_k - (x^N_{k+1} - \tilde{x}^N_{k+1} )\right|\right)+ \left|x^N_{N} - \tilde{x}^N_{N}\right|,
\end{align*}
where in the last inequality we repeatedly make use of the triangular inequality 
\begin{align*}
    \left|x^N_k - \tilde{x}^N_k\right| \leq \left|x^N_k - \tilde{x}^N_k - (x^N_{k+1} - \tilde{x}^N_{k+1} )\right| +\left|x^N_{k+1} - \tilde{x}^N_{k+1}\right| \qquad k=j+1,\ldots,N-1.
\end{align*}
Therefore, by summing in $j$, 
and since $x_N^N = \tilde{x}_N^N$, it holds
\begin{equation}
    \label{eq:sumj}
    \begin{aligned}
     \sum_{j=0}^{N-1}\left|x^N_j - \tilde{x}^N_j + (z-jl)\left(y^N_j  - \tilde{y}^N_j\right)\right| &\leq 2\left(\sum_{j=0}^{N-1}\sum_{k=j}^{N-1}\left|x^N_k - \tilde{x}^N_k - (x^N_{k+1} - \tilde{x}^N_{k+1} )\right|\right)
     + N\left|x^N_{N} - \tilde{x}^N_{N}\right|
     \\
    &\leq 2N\left(\sum_{j=0}^{N-1}\left|x^N_j - \tilde{x}^N_j - (x^N_{j+1} - \tilde{x}^N_{j+1} )\right|\right)+0\\
    &=2\sum_{j=0}^{N-1}\left|y^N_j -\tilde{y}^N_j\right|.
\end{aligned}
\end{equation}
Then, relying on~\eqref{eq:sumj}, and recalling~\eqref{pseudoinverseeul},
we find
\begin{align*}
    \int_0^1|X_{\rho^{E,N}}(z)-X_{\tilde{\rho}^{E,N}}(z)|dz &= \int_{0}^{1}\sum_{j=0}^{N-1}\left|x^N_j - \tilde{x}^N_j + (z-jl)\left(y^N_j  - \tilde{y}^N_j\right)\right|\chi_{[jl, (j+1)l)}(z)dz \\
    &\leq \int_{0}^{1}
    2\sum_{j=0}^{N-1}\left|y^N_j -\tilde{y}^N_j\right| 
    \chi_{[jl, (j+1)l)}(z)dz = 2\normone{y^{L,N} - \tilde{y}^{L,N} }{[0,1]}\,.
\end{align*}
By \eqref{eq:Ldensl1norm}, \eqref{defofwass},  this proves~\eqref{eq:w1euldensest1}.
\end{proof}

\begin{proposition}\label{wassconv}
Assume that the velocity map $v$ satifies \eqref{e-V1}.
  Let $\{x_j^N(t)\}_{j=0}^N$, $\{\tilde{x}_j^N(t)\}_{j=0}^N$ be solutions of \eqref{ftl}, 
  that satisfy the  condition of uniformly bounded initial support  \eqref{cond:ubis}. Consider the corresponding Eulerian discrete densities $\rho^{E,N}$, $\tilde{\rho}^{E,N} \in L^{\infty}([0,+\infty) \times \mathbb{R})$ defined by \eqref{def:erho}. Then, for any fixed $T > 0$ and for all $N$, it holds
  \begin{equation}
      \label{eq:w1-est-23}
      \begin{aligned}
    \sup_{t \in [0,T]}W_1\big(\rho^{E,N}(t), \tilde{\rho}^{E,N}(t)\big) &\leq W_1\big(\rho^{E,N}(0), \tilde{\rho}^{E,N}(0)\big) + 
    \\
    &+2LT\sum_{j=0}^{N-1}|x_{j+1}(0) - x_j(0) - (\tilde{x}_{j+1}(0) - \tilde{x}_j(0)) |,
\end{aligned}
  \end{equation}
where $L$ is the Lipschitz constant of~$v$.
\end{proposition}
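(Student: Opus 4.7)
The plan is to exploit the identity $W_1(\rho^{E,N}(t), \tilde{\rho}^{E,N}(t)) = \|X_{\rho^{E,N}}(t) - X_{\tilde{\rho}^{E,N}}(t)\|_{L^1([0,1])}$ from Definition~\ref{def:wassersteindistance}, and to control the time derivative of the right-hand side directly. By the explicit form~\eqref{pseudoinverseeul}, on each strip $z \in [jl,(j+1)l)$ one has $X_{\rho^{E,N}}(t,z) = (1-s)\,x_j^N(t) + s\,x_{j+1}^N(t)$ with $s := (z-jl)/l \in [0,1)$; hence the difference $X_{\rho^{E,N}}(t,z) - X_{\tilde{\rho}^{E,N}}(t,z)$ is the linear interpolation in $z$ of the vehicle-position differences $x_j^N(t) - \tilde{x}_j^N(t)$ and $x_{j+1}^N(t) - \tilde{x}_{j+1}^N(t)$. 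This piecewise-linear structure makes the pointwise derivative in $t$ transparent.

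Next, I would differentiate the $L^1$ distance in time. Since the speeds $\dot x_j^N$ are uniformly bounded by $v_{\max}$, the pseudo-inverses are Lipschitz in $t$ uniformly in $z$, and one may write
\[
\frac{d}{dt}\|X_{\rho^{E,N}}(t) - X_{\tilde{\rho}^{E,N}}(t)\|_{L^1([0,1])} \leq \int_0^1 \bigl|\tfrac{d}{dt}\bigl(X_{\rho^{E,N}}(t,z) - X_{\tilde{\rho}^{E,N}}(t,z)\bigr)\bigr|\, dz.
\]
Setting $\alpha_j(t) := \dot{x}_j^N(t) - \dot{\tilde{x}}_j^N(t)$, on $[jl,(j+1)l)$ the integrand equals $|(1-s)\alpha_j(t) + s\,\alpha_{j+1}(t)|$. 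Estimating this by $|\alpha_j(t)| + |\alpha_{j+1}(t)|$, integrating in $z$, summing over $j$, and using $\alpha_N(t) \equiv 0$ (since $\dot{x}_N^N = \dot{\tilde{x}}_N^N = v_{\max}$ by~\eqref{FtL-0N}), yields a bound of the form $2l \sum_{j=0}^{N-1} |\alpha_j(t)|$. By the FtL dynamics~\eqref{ftl} and the Lipschitz assumption in~\eqref{e-V1}, for each $j < N$ we have $|\alpha_j(t)| = |v(\rho_j^N(t)) - v(\tilde{\rho}_j^N(t))| \leq L\,|\rho_j^N(t) - \tilde{\rho}_j^N(t)|$.

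At this point the key step is to invoke the discrete $L^1$ contraction~\eqref{stabilityresultinv} of Proposition~\ref{contractinvrho} to bound, uniformly in $t$, $\sum_{j=0}^{N-1}|\rho_j^N(t) - \tilde{\rho}_j^N(t)| \leq \sum_{j=0}^{N-1}|y_j^N(0) - \tilde{y}_j^N(0)|$. Since $l\,y_j^N(0) = x_{j+1}^N(0) - x_j^N(0)$, the right-hand side equals $l^{-1}\sum_j |x_{j+1}^N(0) - x_j^N(0) - (\tilde{x}_{j+1}^N(0) - \tilde{x}_j^N(0))|$. Combining these estimates produces a differential inequality with a time-independent right-hand side, which integrated from $0$ to $t \in [0,T]$ and maximised over $t$ yields exactly~\eqref{eq:w1-est-23}. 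The main technical care is in the bookkeeping of the telescoping sum over $j$ that produces the constant $2L$ (crucially exploiting that the boundary term at $j=N$ vanishes thanks to the shared leader velocity $v_{\max}$), and in the routine justification that $d/dt$ commutes with the spatial integral and with the absolute value, both of which follow from the Lipschitz regularity of $t\mapsto x_j^N(t)$.
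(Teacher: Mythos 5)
Your proposal is correct and follows essentially the same route as the paper's proof: both rest on the representation $W_1=\lVert X_{\rho^{E,N}}-X_{\tilde\rho^{E,N}}\rVert_{L^1([0,1])}$, the observation that on each strip the pseudo-inverse difference is the linear interpolation of the position differences, the Lipschitz bound $|v(\rho^N_j)-v(\tilde\rho^N_j)|\le L|\rho^N_j-\tilde\rho^N_j|$, and crucially the time-uniform discrete contraction of Proposition~\ref{contractinvrho}. The only (cosmetic) difference is that you differentiate the $L^1$ norm in time and integrate the resulting differential inequality, whereas the paper inserts the integral form of the FtL ODEs directly into the pointwise estimates before integrating in $z$; both yield the constant $2L$ and the stated bound.
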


\begin{proof}[Proof.  ]

{\bf 1.}
In this step we show that, for $z \in [1-l,1)$, it holds
\begin{equation}
\label{eq:particle-est-1}
\begin{aligned}
&\left|x^N_{N-1}(t) - \tilde{x}^N_{N-1}(t) + (z-1+l)\left(\frac{1}{\rho^N_{N-1}(t)}  - \frac{1}{\tilde{\rho}^N_{N-1}(t)}\right)\right|\leq  \\
        &\qquad\quad 
        \leq\left|x^N_{N-1}(0) - \tilde{x}^N_{N-1}(0) + (z-1+l)\left(\frac{1}{\rho^N_{N-1}(0)}  - \frac{1}{\tilde{\rho}^N_{N-1}(0)}\right)\right| +
        \\
    &\qquad\qquad
    + L\int_0^t \left|\rho^N_{N-1}(s) - \tilde{\rho}^N_{N-1}(s)\right| ds,
\end{aligned}
\end{equation}
and, for all $j = 0,\ldots,N-2$, and  $z \in [jl,(j+1)l)$, it holds
\begin{equation}
\label{eq:particle-est-2}
    \begin{aligned}
        &\left|x^N_j(t) - \tilde{x}^N_j(t) + (z-jl)\left(\frac{1}{\rho^N_j(t)}  - \frac{1}{\tilde{\rho}^N_j(t)}\right)\right|\leq 
        \\
        &\qquad\quad \leq \left|x^N_j(0) - \tilde{x}^N_j(0) + (z - jl)\left(\frac{1}{\rho^N_j(0)}  - \frac{1}{\tilde\rho^N_j(0)}\right)\right|+ \\
    &\qquad\qquad+
    L \int_0^t\Big(\left|\rho^N_j(s) - \tilde{\rho}^N_j(s)\right| + \left|\rho^N_{j+1}(s) - \tilde{\rho}^N_{j+1}(s)\right|\Big)ds.
    \end{aligned}
\end{equation}
To this end,  first notice that \eqref{ftl} 
ensures
\begin{align}
\label{eq:particle-eq-1}
    x^N_N(t)- \tilde{x}_N^N(t) = x^N_N(0)- \tilde{x}_N^N(0),
\end{align}
\begin{align}
\label{eq:particle-eq-2}
    x^N_j(t)- \tilde{x}_j^N(t) = x^N_j(0)- \tilde{x}_j^N(0) + \int_0^t v(\rho_j^N(t)) - v(\tilde{\rho}_j^N(t))dt,
    \qquad j=0,\ldots, N-1.
\end{align}
Moreover, observe that
\begin{equation*}
    1 - \frac{z-1+l}{l} \leq 1 \qquad \forall \,z \in [1-l,1],
\end{equation*}
and that, recalling~\eqref{defofrhoj}, we have the identity
\begin{equation}
\label{eq:particle-est-3}
\begin{aligned}
     &\big(x^N_{N-1}(t) - \tilde{x}^N_{N-1}(t)\big)\left(1 - \frac{z-1+l}{l}\right) + \frac{z-1+l}{l} \big(x^N_N(t) - \tilde{x}^N_N(t)\big)= \\
    &\hspace{2in} 
    = x^N_{N-1}(t) - \tilde{x}^N_{N-1}(t) + (z-1+l)\left(\frac{1}{\rho^N_{N-1}(t)}  - \frac{1}{\tilde{\rho}^N_{N-1}(t)}\right).
\end{aligned}
\end{equation}
Then, relying on~\eqref{eq:particle-eq-1}-\eqref{eq:particle-est-3}, 
and using  the Lipschitz continuity of the velocity $v$, we derive that,
for 
$z \in [1-l,1)$, it holds
\begin{align*}
    &\left|x^N_{N-1}(t) - \tilde{x}^N_{N-1}(t) + (z-1+l)\left(\frac{1}{\rho^N_{N-1}(t)}  - \frac{1}{\tilde{\rho}^N_{N-1}(t)}\right)\right| \\
    &= \left|(x^N_{N-1}(t) - \tilde{x}^N_{N-1}(t))\left(1 - \frac{z-1+l}{l}\right) + \frac{z-1+l}{l} (x^N_N(t) - \tilde{x}^N_N(t))\right|\\
    &= \left|\left(x^N_{N-1}(0) - \tilde{x}^N_{N-1}(0) + \int_0^t (v(\rho^N_{N-1}(s)) - v(\tilde{\rho}^N_{N-1}(s)) )ds\right)\left(1 - \frac{z-1+l}{l}\right) \right. \\
    &\quad+ \left.\frac{z-1+l}{l}\left(x^N_N(0) - \tilde{x}^N_N(0) \right)\right|\\
     &\leq \left|(x^N_{N-1}(0) - \tilde{x}^N_{N-1}(0))\left(1 - \frac{z-1+l}{l}\right) + \frac{z-1+l}{l} (x^N_N(0) - \tilde{x}^N_N(0))\right| \\
     &\quad + \left(1 - \frac{z-1+l}{l}\right)\int_0^t |v(\rho^N_{N-1}(s)) - v(\tilde{\rho}^N_{N-1}(s)) |ds\\
     &\leq\left|x^N_{N-1}(0) - \tilde{x}^N_{N-1}(0) + (z-1+l)\left(\frac{1}{\rho^N_{N-1}(0)}  - \frac{1}{\tilde{\rho}^N_{N-1}(0)}\right)\right|  + L\int_0^t \left|\rho^N_{N-1}(s) - \tilde{\rho}^N_{N-1}(s)\right| ds,
\end{align*}
which proves~\eqref{eq:particle-est-1}.

Next, observe that, for $j=0,...,N-2$, one has
\begin{align}
\label{eq:z-est-2}
    1 - \frac{z - jl}{l} \leq 1 \qquad \text{and} \qquad \frac{z - jl}{l} \leq 1 \qquad \forall \,z \in [jl, (j+1)l],
\end{align}
and  the identity
\begin{equation}
\label{eq:particle-est-4}
\begin{aligned}
    &(x^N_j(t) - \tilde{x}^N_j)(t)\left(1 - \frac{z - jl}{l}\right) + \frac{z - jl}{l} \big(x^N_{j+1}(t) - \tilde{x}^N_{j+1}(t)\big) \\
    &\hspace{2in} 
    = x^N_j(t) - \tilde{x}^N_j(t) + (z - jl)\left(\frac{1}{\rho^N_j(t)}  - \frac{1}{\tilde{\rho}^N_j(t)}\right).
\end{aligned}
\end{equation}
Then, relying on~\eqref{eq:particle-eq-2}, \eqref{eq:z-est-2}, \eqref{eq:particle-est-4}, with similar computations as above we find that, for $j=0,...,N-2$, and for $z \in [jl, (j+1)l)$, it holds
\begin{eqnarray*}
    &&\left|x^N_j(t) - \tilde{x}^N_j(t) + (z-jl)\left(\frac{1}{\rho^N_j(t)}  - \frac{1}{\tilde{\rho}^N_j(t)}\right)\right| \\
     && =\left|(x^N_j(0) - \tilde{x}^N_j(0))\left(1 - \frac{z - jl}{l}\right) + \frac{z - jl}{l} \big(x^N_{j+1}(0) - \tilde{x}^N_{j+1}(0)\big) +\right.\\
    &&\qquad\ + \!\!\left. \left( 1 - \frac{z - jl}{l}\right)\int_0^t \Big(v(\rho^N_{j}(s)) - v(\tilde{\rho}^N_{j}(s))\Big)ds\! + \!\frac{z - jl}{l}\int_0^t \Big(v(\rho^N_{j+1}(s)) - v(\tilde{\rho}^N_{j+1}(s)) \Big)ds\right|\\
    &&\leq \left|(x^N_j(0) - \tilde{x}^N_j(0))\left(1 - \frac{z - jl}{l}\right) + \frac{z - jl}{l} (x^N_{j+1}(0) - \tilde{x}^N_{j+1}(0)) \right|+\\
    && \qquad\ +  L \int_0^t\Big(\left|\rho^N_j(s) - \tilde{\rho}^N_j(s)\right| + \left|\rho^N_{j+1}(s) - \tilde{\rho}^N_{j+1}(s)\right|\Big)ds\\
    &&\leq \left|x^N_j(0) - \tilde{x}^N_j(0) + (z - jl)\left(\frac{1}{\rho^N_j(0)}  - \frac{1}{\tilde\rho^N_j(0)}\right)\right| + \\
    &&\qquad\ +
    L \int_0^t\Big(\left|\rho^N_j(s) - \tilde{\rho}^N_j(s)\right| + \left|\rho^N_{j+1}(s) - \tilde{\rho}^N_{j+1}(s)\right|\Big)ds,
\end{eqnarray*}
which proves~\eqref{eq:particle-est-2}.
\medskip

{\bf 2.}
By definition of the pseudo-inverse given in \eqref{pseudoinverseeul}, using the bounds~\eqref{eq:particle-est-1}, \eqref{eq:particle-est-2} and recalling~\eqref{eq:minimun-initial-distance},
we get
\begin{eqnarray*}
    &&\int_0^1|X_{\rho^{E,N}(t)}(z)-X_{\tilde{\rho}^{E,N}(t)}(z)|dz \\
    &&= \int_{0}^{1}\sum_{j=0}^{N-1}\left|x^N_j(t) - \tilde{x}^N_j(t) + (z-jl)\left(\frac{1}{\rho^N_j(t)}  - \frac{1}{\tilde{\rho}^N_j(t)}\right)\right|\chi_{[jl, (j+1)l)}(z)dz \\
    &&\leq \int_0^1\sum_{j=0}^{N-1}\left|x^N_j(0) - \tilde{x}^N_j(0) + (z-jl)\left(\frac{1}{\rho^N_j(0)}  - \frac{1}{\tilde{\rho}^N_j(0)}\right)\right|\chi_{[jl, (j+1)l)}(z)dz\\
    &&\qquad +   2L\int_0^1\int_0^t\sum_{j=0}^{N-1}\left|\rho^N_j(s) - \tilde{\rho}^N_j(s)\right|\chi_{[jl, (j+1)l)}(z)dsdz,
    \\
    &&= \int_0^1\sum_{j=0}^{N-1}\left|x^N_j(0) - \tilde{x}^N_j(0) + (z-jl)\left(\frac{1}{\rho^N_j(0)}  - \frac{1}{\tilde{\rho}^N_j(0)}\right)\right|\chi_{[jl, (j+1)l)}(z)dz\\
    &&\qquad +   \frac{2L}{N}\int_0^t\sum_{j=0}^{N-1}\left|\rho^N_j(s) - \tilde{\rho}^N_j(s)\right|ds.
    \end{eqnarray*}
Then, applying Proposition \ref{contractinvrho} 
we deduce
 \begin{align}
 \label{eq:pi-est-2}
    \int_0^1|X_{\rho^{E,N}(t)}(z)-X_{\tilde{\rho}^{E,N}(t)}(z)|dz \leq \normone{X_{\rho^{E,N}}(0)-X_{\tilde{\rho}^{E,N}}(0)}{[0,1]} +   \frac{2Lt}{N}\sum_{j=0}^{N-1}\left|y^N_j(0) - \tilde{y}^N_j(0)\right|.
    \end{align}   
 Notice that, by~\eqref{eq:minimun-initial-distance}, \eqref{def:ly}, we have
\begin{align}
\label{eq:le-eq}
    \normone{y^{L,N}(0) - \tilde{y}^{L,N}(0)}{[0,1]} =
    \sum_{j=0}^{N-1} |y_j^{N}(0) - \tilde{y}_j^{N}(0)| \int_0^1 \chi_{[jl,(j+1)l)}(z) dz = \frac{1}{N}\sum_{j=0}^{N-1} |y_j^{N}(0) - \tilde{y}_j^{N}(0)|.
\end{align}
Therefore, from~\eqref{eq:pi-est-2}-\eqref{eq:le-eq}, we obtain
\begin{equation}
\label{eq:pi-ineq23}
\begin{aligned}
    \normone{X_{\rho^{E,N}}(t)-X_{\tilde{\rho}^{E,N}}(t)}{[0,1]} &\leq \normone{X_{\rho^{E,N}}(0)-X_{\tilde{\rho}^{E,N}}(0)}{[0,1]} + \\
    &\quad +2Lt\normone{y^{L,N}(0)- \tilde{y}^{L,N}(0)}{[0,1]},
    \qquad \forall~t>0.
\end{aligned}
\end{equation}
By Definition~\ref{def:wassersteindistance}, we can restate~\eqref{eq:pi-ineq23} in terms of of the Wasserstein distance as 
\begin{align}
\label{eq:w1-ineq23}
    W_1(\rho^{E,N}(t), \tilde{\rho}^{E,N}(t)) \leq W_1(\rho^{E,N}(0), \tilde{\rho}^{E,N}(0)) + 2Lt\normone{y^{L,N}(0)- \tilde{y}^{L,N}(0)}{[0,1]},
    \qquad \forall~t>0.
\end{align}
Taking the supremum in~\eqref{eq:w1-ineq23} over  the time interval $[0,T]$, and recalling~\eqref{eq:Ldensl1norm},  we recover the inequality~\eqref{eq:w1-est-23}.
\end{proof}

\begin{proposition}\label{convinfimpliesconvl1}
Under the same assumptions of Proposition~\ref{wassconv}, 
for any $T>0$ the following hold:
\begin{itemize}
  \item[(i)] if there exists $C > 0$ 
    such that
    $\totvar \left(\erho(0); \mathbb{R}\right),\totvar \left(\erhotilde(0); \mathbb{R}\right) < C$ for all $N$, then 
    there exist $C_T>0$ such that
    for all $N$
    there holds
\begin{equation*}
     \quad \normone{\erho(t) - \erhotilde(t)}
    {\mathbb{R}}^2 
    \leq 2\, C_T
    \sqrt{\sup_{t\in[0,T]}\normone{F_{\erho}(t) - F_{\erhotilde}(t)}{\mathbb{R}}},
    \qquad \forall~t\in [0,T];
\end{equation*}
      \item[(ii)] if the velocity $v$ satisfies \eqref{ass:furtheronv}, then 
      for any $\delta> 0$,
      there exist $C_\delta, C_{\delta,T}>0$
      such that for all $N$ there hold
      \begin{equation*}
          \totvar \left(\erho(t); \mathbb{R}\right)< C_\delta,
          \qquad 
          \totvar \left(\erhotilde(t); \mathbb{R}\right)< C_\delta\qquad \forall~t\geq\delta,          
      \end{equation*}
      and
\begin{equation*}
      \quad \
      \normone{\erho(t) - \erhotilde(t)}
    {\mathbb{R}}^2 
    \leq 2\,  C_{\delta,T}
    \sqrt{\sup_{\tau\in[0,T]}\normone{F_{\erho}(\tau) - F_{\erhotilde}(\tau)}{\mathbb{R}}},
    \qquad \forall~t\in [\delta,T].
\end{equation*}
\end{itemize}
\end{proposition}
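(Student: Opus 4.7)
The plan is to adapt, almost verbatim, the chain of estimates carried out in Step~1 of the proof of Theorem~\ref{thm:micromacrointro}, with the pair $(\erho,\erhoM)$ there replaced by the pair $(\erho,\erhotilde)$, and then to close the argument by invoking the appropriate total-variation bounds furnished by Proposition~\ref{prop: contract Tot Var}. No new idea is needed beyond a careful repackaging: the computation there produces an integrated-in-time Cauchy-type inequality, whereas here I want the same inequality pointwise in $t$.

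First I would note that, by Proposition~\ref{p:ubisT}, for every $T>0$ both $\supp(\erho(t))$ and $\supp(\erhotilde(t))$ are contained in the compact set $K_T$ defined in~\eqref{e-KT}, for all $t\in[0,T]$ and every $N$. Hence H\"older's inequality yields the localization bound
\begin{equation*}
\normone{\erho(t)-\erhotilde(t)}{\mathbb{R}}^{2}\ \leq\ \meas(K_T)\,\normtwo{\erho(t)-\erhotilde(t)}{\mathbb{R}}^{2}.
\end{equation*}
Next, exactly as in~\eqref{inmaininomega}, integration by parts combined with the observation that $F_{\erho}(t,\cdot)-F_{\erhotilde}(t,\cdot)$ is $1$-Lipschitz (so that Lemma~\ref{convl1impliesconvlinf} applies) gives
\begin{equation*}
\normtwo{\erho(t)-\erhotilde(t)}{\mathbb{R}}^{2}\ \leq\ \sqrt{\normone{F_{\erho}(t)-F_{\erhotilde}(t)}{\mathbb{R}}}\,\bigl[\totvar(\erho(t);\mathbb{R})+\totvar(\erhotilde(t);\mathbb{R})\bigr].
\end{equation*}

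To close the estimate I would then plug in the appropriate total-variation bound. For part~(i), Proposition~\ref{prop: contract Tot Var}-(i) asserts that the total variation is non-increasing in time, so the uniform bound $\totvar(\erho(0);\mathbb{R}),\totvar(\erhotilde(0);\mathbb{R})<C$ propagates to every $t\geq 0$; combining everything and taking the supremum in $t$ only on the right-hand side gives the claimed inequality with $C_T:=C\,\meas(K_T)$. For part~(ii), Proposition~\ref{prop: contract Tot Var}-(ii) delivers, for any $\delta>0$, a constant $C_\delta>0$ such that $\totvar(\erho(t);\mathbb{R}),\totvar(\erhotilde(t);\mathbb{R})\leq C_\delta$ uniformly in $N$ for all $t\geq\delta$; applying the same chain of inequalities for $t\in[\delta,T]$ then yields the stated bound with $C_{\delta,T}:=C_\delta\,\meas(K_T)$.

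The main (and only) obstacle is bookkeeping: one must be careful to take the supremum in $t$ solely on the right-hand side so that the inequality remains pointwise in $t$, and one must check that the total-variation bounds of Proposition~\ref{prop: contract Tot Var} apply to \emph{both} discretization schemes $\erho$ and $\erhotilde$, which is immediate since that proposition is independent of the specific initial atomization.
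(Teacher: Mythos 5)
Your proposal is correct and follows essentially the same route as the paper, which itself proves this proposition simply by pointing back to Step~1 of the proof of Theorem~\ref{thm:micromacrointro} (the chain of estimates in~\eqref{inmaininomega}, \eqref{ineqforconvratelater}, \eqref{ineqforconvratelater-2}) with $\erhoM$, $F^M$ replaced by $\erhotilde$, $F_{\erhotilde}$, and then invokes Proposition~\ref{prop: contract Tot Var} exactly as you do. Your identification of the constants $C_T=C\,\meas(K_T)$ and $C_{\delta,T}=C_\delta\,\meas(K_T)$, and your remark that the total-variation bounds are independent of the initial atomization, match the paper's argument.
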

\begin{proof} 
The proofs 
can be obtained with precisely the same arguments of 
Step~1 of the proof of Theorem \ref{thm:micromacrointro}, replacing $\erhoM, F^M$, with
$\erhotilde, \tilde F^N$, 
respectively: see~\eqref{ineqforconvratelater}, \eqref{ineqforconvratelater-2}.
\end{proof}
\medskip

We are now ready to establish the proof of the second main result of this paper.
\begin{proof}[Proof of Theorem \ref{thm:stabilitytheoremintro}]
The proof is based on the concatenation of the above propositions.
Namely, notice that the estimate~\eqref{eq:wasse-stab}
is an immediate consequence of 
Proposition \ref{wassconv}.
Also, observe that, by  Definition~\ref{def:wassersteindistance}, we have
\begin{equation*}
    \normone{F_{\erho}(t) - F_{\erhotilde}(t)}{\mathbb{R}}=W_1\big(\rho^{E,N}(t), \tilde{\rho}^{E,N}(t)\big).
\end{equation*}
Applying Proposition \ref{lagrangianimplieswasserstein}
together with Proposition~\ref{convinfimpliesconvl1}, and relying on~\eqref{stabilityhypo}, we derive
the uniform limits~\eqref{unifconv-bvbdd}
and
\begin{equation}
\label{unifconv-bvinf-delta}
      \sendlim{\sup_{\, t \in [\delta,\, T]}\normone{\rho^{E,N}(t) - \tilde{\rho}^{E,N}(t)}{\mathbb{R}}},
      \qquad\forall~\delta>0.
\end{equation}
Then, we recover the uniform limit \eqref{unifconv-bvinf} from the limit in~\eqref{unifconv-bvinf-delta}
with $\delta=1/k$, for some subsequences $\{{\rho}^{E,N_k}\}_k$\,, $\{\tilde{\rho}^{E,N_k}\}_k$ 
constructed
by a diagonal procedure. 
This completes the prof of the theorem.
\end{proof}



\bibliographystyle{plain}
\bibliography{references}

\end{document}